\documentclass[10pt]{article}
\usepackage{amsmath,amsthm,amsfonts}
\usepackage{float}
\usepackage{graphicx}

\usepackage[ruled]{algorithm2e}
\usepackage{amsmath,amsthm,amsfonts,amscd}
\usepackage{eucal} 	 	
\usepackage{verbatim}      	
\usepackage{makeidx}       	
\usepackage{url}		

\setlength{\textheight}{8.9in}
\setlength{\topmargin}{-0.45in}
\setlength{\oddsidemargin}{0.1in}
\setlength{\textwidth}{6.3in}
\def\qed{\hfill{$\vcenter{\hrule height1pt \hbox{\vrule width1pt height5pt
    \kern5pt \vrule width1pt} \hrule height1pt}$} \medskip}
\newtheorem{theorem}{Theorem}
\newtheorem{lemma}[theorem]{Lemma}
\newtheorem{corollary}[theorem]{Corollary}

\newtheorem{example}{Example}

\def\reals{{\rm\vrule depth0ex width.4pt\kern-.08em R}}

\newcommand{\bs}{\bigskip}

\newcommand{\vmin}{\mbox{vmin}}


%
%


\theoremstyle{definition}



\SetKwIF{step}{}{}{}{}{}{}{}
\SetKw{KwStepOne}{step 1}
\SetKw{KwStepTwo}{step 2}
\SetKw{KwStepThree}{step 3}
\SetKw{KwStepFour}{step 4}
%
%


%

\newcommand{\latexe}{{\LaTeX\kern.125em2%
                      \lower.5ex\hbox{$\varepsilon$}}}

\chardef\bslash=`\\	

\makeatletter		
\def\square{\RIfM@\bgroup\else$\bgroup\aftergroup$\fi
  \vcenter{\hrule\hbox{\vrule\@height.6em\kern.6em\vrule}%
                                              \hrule}\egroup}
\makeatother		

\makeindex    

\begin{document}

\baselineskip0.24in

\begin{center}
\begin{Large}
\begin{bf}

Staffing Large Service Systems Under Arrival-rate Uncertainty \bs


\end{bf}
\end{Large}
\end{center}

\begin{center}
\textbf{Jing Zan} \\
Zilliant, Inc. \\
Austin, Texas, 78701 \\

\vspace{0.1in}

\textbf{John J. Hasenbein and David Morton} \\
 Graduate Program in Operations Research and Industrial Engineering \\
 Department of Mechanical Engineering \\
 University of Texas at Austin, Austin, Texas, 78712 \\
 
 \vspace{0.1in}
 
 \emph{jingzan@gmail.com, jhas@mail.utexas.edu, morton@mail.utexas.edu}
 \end{center}

\begin{center}
\section*{\large{Abstract}}
\end{center}
We consider the problem of staffing large-scale service systems with multiple
customer classes and multiple dedicated server pools under joint quality-of-service
(QoS) constraints. We first analyze the case in which arrival rates are deterministic and
the QoS metric is the probability a customer is queued, given by the Erlang-C formula. 
We use the Janssen-Van Leeuwaarden-Zwart bounds to obtain asymptotically optimal solutions to
this problem.  The second model considered is one in which the arrival rates are not completely
known in advance (before the server staffing levels are chosen), but rather are known via
a probability distribution. In this case, we provide asymptotically optimal solutions to the 
resulting stochastic integer program, leveraging results obtained for the deterministic
arrivals case. 

\vspace{0.2in}

\section{Introduction} 

In this paper, we consider the problem of staffing large-scale service systems with 
multiple customer classes and dedicated server pools for each class. Staffing is such systems 
must typically be done under quality-of-service (QoS) constraints and in our model the QoS 
metric is the probability that a customer is queued (i.e., he must wait for service). In the
classical $M/M/n$ model, this probability is given by the Erlang-C formula. Since
we analyze multiple server systems, we formulate joint QoS constraints that connect 
the performance of all the server pools. Our first results pertain to such systems when
the arrival rates are deterministic. However, our final goal is to examine systems
in which the arrival rates are  known to the system manager only through the
joint distribution of the arrival rate vector. In this case, the joint QoS constraints become
more interesting, since optimizing the staffing level under such constraints can rely crucially
on the correlations between arrival rates for different classes of customers. 
The first primary contribution of the paper is to show that the Janssen-Van Leeuwaarden-Zwart
bounds \cite{janssen_08} converge uniformly, under Halfin-Whitt-type scaling, to the Erlang-C formula
(see Theorem \ref{UniConvOfBounds}). The other main contribution is to introduce and 
solve problems in which the \emph{correlation} between uncertain arrival rates plays
an important role in staffing problems with QoS metrics. 

The literature on staffing service systems goes back to Erlang himself, in the development of the
Erlang-C formula. More recent work has focused on developing approximations of various 
kinds and even this literature is quite voluminous. In terms of asymptotic approximations with
deterministic system parameters, the fundamental inspiration for our work is Halfin and
Whitt \cite{Halfin_81} which introduced the idea of a many-server limit. More closely related
recent work is that of Borst et al.\ \cite{borst_04}. They consider many-server asymptotic approximations and use the Halfin-Whitt formula to approximately solve both the constrained
and ``dualized'' formulations of their model (compare to equations (\ref{eq:bicriteria_beta}) and 
(\ref{eq:ConstraintModel})
 in Section \ref{sec:model_formulation}). A primary difference in this paper is that we use uniform
convergence results to prove a stronger form of asymptotic convergence. In particular, they
prove that the ratio of the approximate and exact costs converge to unity whereas we demonstrate 
that the difference of the costs go to zero. Furthermore, we also
consider the multi-station case. 
The work in Janssen et al.\ \cite{janssen_08} is also closely related to our results and in fact
we use the bounds developed there to formulate our approximations. They consider the same
single-station staffing problems as described here, obtaining
stronger asymptotic optimality results than \cite{borst_04} via an approximation that
refines the Halfin-Whitt formula. However, they also focus on the
single-station case. 

Until recently, much of the literature on service system staffing focused on the models
in which the systems parameters are known with certainty. However, with renewed interest
in call center modeling, there have been significant efforts to incorporate parameter uncertainty,
especially with respect to arrival rates. Harrison and Zeevi \cite{harrison_05} were probably the first
to explicitly consider multi-station staffing models with joint arrival rate uncertainty. 
They propose a staffing method based on a stochastic fluid approximation, but do not
provide analytical results regarding the accuracy of this approximation. A follow-up
paper, Bassamboo et al.~\cite{bhz06} provides the rigorous justification for the methods proposed in \cite{harrison_05}. 
Bassamboo and Zeevi \cite{bassamboo_07} present a data-driven version of the model considered
in the two aforementioned papers. 
Bassamboo et al.~\cite{brz10} continue this stream of papers by performing a more
nuanced investigation of news-vendor type solutions derived in \cite{harrison_05},
in the single-station case. 

Whitt \cite{whi06} uses a different fluid model formulation to analyze a single-station system
with both arrival rate uncertainty and staffing uncertainty due to absenteeism. 
Gurvich et al.~\cite{gurvich_10} investigate staffing a call center with multiple classes,
QoS constraints, and uncertain arrival rates. However, the QoS requirement in their
model are formulated via chance-constraints which insure that the requirements are
met with high probability. Their paper contains a nice discussion of chance-constrained
versus average QoS performance formulations. Finally, Kocaga et al.~\cite{kaw13}
introduce a single-station
staffing and admissions control model with uncertain arrival rates. Their solution is to use
a diffusion approximation to derive a square-root staffing rule in conjunction with a
threshold-based admissions policy. 

The rest of the paper is structured as follows. In Section \ref{chapter-mathematical-background}
we introduce the model and review the relevant mathematical background. 
The analysis of deterministic arrival rate problems is given in Section \ref{chapter-deterministic-arrival-rates}. In Section \ref{chapter-stochastic-arrival-rates}, we consider the random arrival rate case. 

In the sequel, we use the notation $\mathbb{N}= \{1,2,\ldots\}$ and $\mathbb{Z}_{+}=\{0,1,2,\ldots\}$. As usual, 
$\phi(\cdot)$ is the probability density function (PDF) of a standard normal distribution and $\Phi(\cdot)$ is
the corresponding cumulative distribution function (CDF).

\section{Model Formulations and Mathematical Background}
\label{chapter-mathematical-background}

The basis for all of our models is the classic Erlang-C model, along with related approximations. 
For completeness we first review these standard results. This review also contains a key new result
which shows that the bounds given in Janssen et al.~\cite{janssen_08} actually converge uniformly
in the QED regime to the Erlang-C formula. Numerical evidence indicates that the same type of
result holds for the Halfin-Whitt approximation. However, proving this convergence seems
more difficult and the upper bound given in \cite{janssen_08} is more useful, since it can be used
to guarantee feasibility in problems with QoS constraints. 

\subsection{The $M/M/n$ Queue}
\label{sec:erlang-c}

We consider a standard $M/M/n$ queue operating under the first-come-first-served (FCFS)
service discipline. The arrival rate is denoted by $\lambda$ and the service rate by $\mu$.
Without loss of generality, we assume $\mu=1$. Of course in this case the traffic intensity is
also equal to $\lambda$. 
Let $Q$ be the stationary system size. 
A classical result is that $Q$ has the following distribution:
\begin{equation*}
\mathbb{P}\{Q=k\}=\left\{
\begin{array}{l l}
  \eta\frac{\lambda^{k}}{k!} & \quad \text{for $k = 0, 1, 2, \ldots, n-1$}\\
  \eta\frac{n^n (\lambda/n)^{k}}{n!} & \quad \text{$k = n, n+1, \ldots,$}\\
\end{array} \right.
\label{ErlangC_1}
\end{equation*}
where $\eta$ is a normalizing constant:
\begin{equation*}
\label{ErlangC_2}
\eta = \left[\sum_{k=0}^{n-1}\frac{(\lambda)^k}{k!}+\frac{(\lambda)^n}{n!(1-\lambda/n)}\right]^{-1}.
\end{equation*}

The result above allows us to compute $\mathbb{P}\{Q \ge n\}$, which by PASTA is equal to the stationary probability that a customer waits to receive service. This leads immediately to the Erlang-C formula, which we express
as a function of the number of servers and the arrival rate:
\begin{equation*}
\label{ErlangCFormula}
\alpha(n,\lambda) := \eta\frac{(\lambda)^n}{n!(1-\lambda/n)}.
\end{equation*}
In order to facilitate the analysis in later sections, we also use the Jagers-Van Doorn \cite{jagers_vandoorn_86}
continuous extension of the Erlang-C formula:
\begin{equation}
\label{ContErlangC}
\bar{\alpha}(n,\lambda):=\left[\lambda\int_{0}^{\infty}t e^{-\lambda t} (1+t)^{n-1}dt\right]^{-1},
\end{equation}
where $n$ now is any non-negative real number. Hence, $\bar{\alpha}: \reals^2_+ \rightarrow [0,1]$.

In later sections, we use asymptotic analysis of the $M/M/n$ queue
in which the arrival rate and the number of servers both grow large. 
When these quantities grow large together in a specific manner, this is referred to 
as the Halfin-Whitt regime, due to the results below. 

Halfin and Whitt \cite{Halfin_81} consider a sequence of $M/M/n$ queues, indexed by the number of servers, $n$.
The arrival rate in system $n$ is denoted by $\lambda_n$. 
As $n$ increases the scaling of $\lambda_n$ is such that the traffic intensity $\rho_{n}:=\frac{\lambda_{n}}{n}$ approaches 1 (recall $\mu=1$).  For completeness, we now restate their classic result.

\begin{theorem}
\emph{(Halfin and Whitt \cite{Halfin_81})}
\label{HWtheorem}
Consider a sequence of $M/M/n$ queues with arrival rates $\lambda_n$, $n = 1,2, \ldots.$ 
As $n \rightarrow \infty$,  $\alpha(n, \lambda)$ converges to a constant $\alpha$ with $0<\alpha<1$ if and only if
\begin{equation}
\label{HW_1}
\sqrt{n}(1-\rho_{n})\rightarrow\beta
\end{equation}
for some $\beta > 0$. If \eqref{HW_1} holds, then
\begin{equation}
\label{HW_2}
\alpha=\frac{1}{1+\sqrt{2\pi}\beta\Phi(\beta)e^{\beta^{2}/2}}.
\end{equation}
\end{theorem}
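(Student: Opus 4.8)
The plan is to reduce the entire statement to the asymptotics of a single Poisson probability ratio. Writing $\rho=\lambda/n$ and inverting the Erlang-C formula gives
\begin{equation*}
\frac{1}{\alpha(n,\lambda)} = 1 + (1-\rho)\,\frac{n!}{\lambda^{n}}\sum_{k=0}^{n-1}\frac{\lambda^{k}}{k!}.
\end{equation*}
Introducing a Poisson$(\lambda)$ random variable $P$, so that $\mathbb{P}\{P=k\}=e^{-\lambda}\lambda^{k}/k!$, this rewrites as
\begin{equation*}
\frac{1}{\alpha(n,\lambda)} = 1 + (1-\rho)\,\frac{\mathbb{P}\{P\le n-1\}}{\mathbb{P}\{P=n\}}.
\end{equation*}
The whole theorem then rests on the limiting behavior of the tail probability $\mathbb{P}\{P\le n-1\}$ and the point probability $\mathbb{P}\{P=n\}$ as $n\to\infty$ with $\lambda=n\rho$ and $\rho\uparrow1$.

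For the ``if'' direction, I would assume $\beta_{n}:=\sqrt{n}(1-\rho_{n})\to\beta>0$ and estimate the two pieces separately. The point probability I would handle with Stirling's formula: $\mathbb{P}\{P=n\}=\lambda^{n}e^{-\lambda}/n!\sim(2\pi n)^{-1/2}\rho_{n}^{\,n}e^{n(1-\rho_{n})}$, and a second-order expansion of the exponent $n\ln\rho_{n}+n(1-\rho_{n})$ about $\rho_{n}=1$ shows it tends to $-\beta^{2}/2$, so that $\sqrt{2\pi n}\,\mathbb{P}\{P=n\}\to e^{-\beta^{2}/2}$. The tail probability I would treat with the central limit theorem for the Poisson: since $(P-\lambda)/\sqrt{\lambda}\Rightarrow N(0,1)$ and the threshold $(n-1-\lambda)/\sqrt{\lambda}\to\beta$, continuity of $\Phi$ gives $\mathbb{P}\{P\le n-1\}\to\Phi(\beta)$. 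Combining these with $(1-\rho_{n})\sqrt{2\pi n}\to\sqrt{2\pi}\,\beta$ yields $1/\alpha\to1+\sqrt{2\pi}\,\beta\Phi(\beta)e^{\beta^{2}/2}$, which is exactly \eqref{HW_2}.

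For the ``only if'' direction I would argue by subsequences, exploiting that $f(\beta):=1+\sqrt{2\pi}\,\beta\Phi(\beta)e^{\beta^{2}/2}$ is continuous and strictly increasing on $[0,\infty)$ with $f(0)=1$ and $f(\beta)\to\infty$ as $\beta\to\infty$. Stability of each queue forces $\beta_{n}>0$. Suppose $\alpha(n,\lambda_{n})\to\alpha\in(0,1)$. If some subsequence had $\beta_{n_{k}}\to0$, the computation above (which is valid for any nonnegative limit) would give $\alpha=f(0)^{-1}=1$, a contradiction; likewise $\beta_{n_{k}}\to\infty$ would force $\alpha=0$. Hence $\{\beta_{n}\}$ is bounded and bounded away from $0$, and every convergent subsequence has a limit $\beta'$ with $f(\beta')=1/\alpha$. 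Injectivity of $f$ on $(0,\infty)$ makes this limit unique, so $\beta_{n}\to\beta=f^{-1}(1/\alpha)>0$, establishing \eqref{HW_1}.

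The main obstacle is making the two Poisson estimates rigorous, and in particular pinning down the constant $e^{-\beta^{2}/2}$ in the point probability: the delicate point is that $n-\lambda=\beta_{n}\sqrt{n}$ itself grows, so one must retain the $O(\beta_{n}^{2})$ term in $n\ln\rho_{n}$ while discarding the $O(n^{-1/2})$ remainder. A secondary technicality is justifying the central limit theorem with the moving threshold $(n-1-\lambda)/\sqrt{\lambda}$, which is routine once one observes it converges to the fixed value $\beta$. Everything else is bookkeeping. An alternative route would bypass the Poisson representation entirely and apply Laplace's method to the integral \eqref{ContErlangC}, but the point-mass-plus-tail decomposition above keeps the normal constants $\Phi(\beta)$ and $e^{\beta^{2}/2}$ most transparent.
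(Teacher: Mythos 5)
The paper offers no proof of this statement: Theorem \ref{HWtheorem} is quoted from Halfin and Whitt \cite{Halfin_81} and restated only for completeness, so your attempt can only be judged on its own merits and against the original argument. Your route is sound and is, in essence, the classical one: the identity $1/\alpha(n,\lambda)=1+(1-\rho)\,\mathbb{P}\{P\le n-1\}/\mathbb{P}\{P=n\}$ for $P\sim\mathrm{Poisson}(\lambda)$ is algebraically correct; Stirling plus the second-order expansion of the exponent $n\ln\rho_n+n(1-\rho_n)\to-\beta^2/2$ gives $\sqrt{2\pi n}\,\mathbb{P}\{P=n\}\to e^{-\beta^2/2}$ (your remark that one must keep the $O(\beta_n^2)$ term while discarding the $O(n^{-1/2})$ remainder is exactly the right concern, and is unproblematic since $\beta_n$ is bounded in that direction); the moving-threshold CLT step is rescued by uniform convergence of distribution functions to the continuous limit $\Phi$ (P\'olya), as you indicate; and the converse via boundedness, subsequences, and strict monotonicity of $f(\beta)=1+\sqrt{2\pi}\,\beta\Phi(\beta)e^{\beta^2/2}$ is the standard compactness argument, with stability correctly supplying $\beta_n>0$.

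One step deserves more than your ``likewise'': in the only-if direction, for a subsequence with $\beta_{n_k}\to\infty$ the two-term expansion you invoke has remainder of order $\beta_{n_k}^3/\sqrt{n_k}$, which need not vanish, so the computation is \emph{not} ``valid for any nonnegative limit'' as written --- it is valid for any \emph{finite} limit. The fix is a one-sided bound: for $x=\beta_n/\sqrt{n}\in(0,1)$ one has $\ln(1-x)+x\le -x^2/2$, hence $\sqrt{2\pi n}\,\mathbb{P}\{P=n\}\le e^{-\beta_{n_k}^2/2}\,(1+o(1))$, and combined with $(1-\rho_{n_k})\sqrt{2\pi n_k}=\sqrt{2\pi}\,\beta_{n_k}\to\infty$ and a lower bound on the tail (when $\lambda_{n_k}\to\infty$ the CLT gives $\mathbb{P}\{P\le n_k-1\}\to1$; when $\lambda_{n_k}$ stays bounded the tail is trivially bounded below) this forces $1/\alpha\to\infty$, i.e. $\alpha=0$, the desired contradiction. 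With that patch your outline is complete; your proposed alternative of Laplace's method applied to the integral \eqref{ContErlangC} would also work, but the point-mass-plus-tail decomposition you chose is the more transparent and is closest to the original.
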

Theorem \ref{HWtheorem} implies that when the system is large enough \eqref{HW_2}, which is called the Halfin-Whitt approximation, approximates the Erlang-C formula well.

In \cite{janssen_08}, Janssen et al.\ provide new bounds for the Erlang-C formula which turn
out to be more analytically tractable than the Halfin-Whitt approximation. In subsequent sections, we use these bounds to build approximate staffing models
and obtain asymptotically optimal solutions.
Hereafter, we refer to these bounds as the JVLZ bounds. 

\begin{theorem}
\emph{(Janssen et al.\ \cite{janssen_08})}
\label{BoundsForErlangC}
Let $\rho=\lambda/ n$ and
\begin{equation}
\label{eq:alpha}
a=\sqrt{-2n(1-\rho+\ln\rho)},
\end{equation}
\begin{equation}
\label{eq:betas}
\beta=(n-\lambda)/\sqrt{\lambda},
\end{equation}
\begin{equation}
\label{eq:gamma}
\gamma=(n-\lambda)/\sqrt{n}=\beta \sqrt{\rho}.
\end{equation}
For $n>\lambda$,
\begin{equation}
\label{eq:UB}
\bar{\alpha} (n,\lambda)\leq\left[\rho+\gamma\left(\frac{\Phi(a)}{\phi(a)}+\frac{2}{3\sqrt{n}}\right)\right]^{-1},
\end{equation}
and
\begin{equation}
\label{eq:LB}
\bar{\alpha} (n,\lambda)\geq\left[\rho+\gamma\left(\frac{\Phi(a)}{\phi(a)}+\frac{2}{3\sqrt{n}+\frac{1}{\phi(a)}\frac{1}{12n-1}}\right)\right]^{-1}.
\end{equation}
\end{theorem}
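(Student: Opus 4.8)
The plan is to work from the Jagers–Van Doorn representation \eqref{ContErlangC} and reduce both inequalities to sharp two-sided estimates of a single integral. Write $A := \int_0^\infty (1+t)^{n-1}e^{-\lambda t}\,dt$, so that $\bar{\alpha}(n,\lambda)^{-1} = \lambda\int_0^\infty t\,e^{-\lambda t}(1+t)^{n-1}\,dt$. First I would integrate by parts: using $t(1+t)^{n-1} = (1+t)^n - (1+t)^{n-1}$ and integrating $\int_0^\infty (1+t)^n e^{-\lambda t}\,dt$ by parts (the boundary term contributes $1/\lambda$) yields the exact identity
\begin{equation*}
\bar{\alpha}(n,\lambda)^{-1} = 1 + (n-\lambda)A = \rho + \gamma\left(\sqrt{n}\,A + \tfrac{1}{\sqrt{n}}\right),
\end{equation*}
where I used $1-\rho = (n-\lambda)/n$ and $n-\lambda = \gamma\sqrt{n}$. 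Hence \eqref{eq:UB} and \eqref{eq:LB} are equivalent to a lower and an upper bound, respectively, on $\sqrt{n}\,A$, with target leading term $\Phi(a)/\phi(a)$.

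The main device is an exact Gaussian change of variables. Substituting $s=1+t$ shows $A = e^{\lambda}\lambda^{-n}\Gamma(n,\lambda)$, the scaled upper incomplete gamma function, so the task is to bound $\Gamma(n,\lambda)$ in the regime $\lambda \uparrow n$. Equivalently, set $g(t):=\lambda t - n\ln(1+t)$, whose minimum is attained at $t^*=\rho^{-1}-1$ with $g(t^*)=-\tfrac12 a^2$ — this is exactly the origin of \eqref{eq:alpha}. Introduce $u$ through $\tfrac12 u^2 = g(t)-g(t^*)$, with $u$ sharing the sign of $t-t^*$; since $g(0)=0$ this maps $t\in[0,\infty)$ onto $u\in[-a,\infty)$. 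Using $u\,du = g'(t)\,dt$ together with the identity $\lambda(1+t)-n=\lambda(t-t^*)$, one gets
\begin{equation*}
\sqrt{n}\,A = \sqrt{n}\,e^{a^2/2}\int_{-a}^{\infty} K(u)\,e^{-u^2/2}\,du, \qquad K(u)=\frac{u}{\lambda\bigl(t(u)-t^*\bigr)},
\end{equation*}
and a short limit computation gives $K(0)=1/\sqrt{n}$. Replacing $K$ by its central value $K(0)$ reproduces exactly $e^{a^2/2}\sqrt{2\pi}\,\Phi(a)=\Phi(a)/\phi(a)$, the stated leading term.

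It then remains to control the remainder $\sqrt{n}\,e^{a^2/2}\int_{-a}^{\infty}\bigl(K(u)-K(0)\bigr)e^{-u^2/2}\,du$, which I would bound via the uniform (Temme) asymptotics of the incomplete gamma function with fully explicit error terms. The first correction coefficient at the coalescence point $u=0$ equals $-\tfrac13$, and the odd (linear) part of $K-K(0)$ integrates against the Gaussian through the boundary term $\int_{-a}^{\infty}u\,e^{-u^2/2}\,du = e^{-a^2/2}$, which cancels the prefactor $e^{a^2/2}$; this explains why the correction $\tfrac{2}{3\sqrt n}$ in \eqref{eq:UB}–\eqref{eq:LB} is independent of $a$. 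The $12n$-type denominators come from the sharp Stirling bounds of the form $\tfrac{1}{12n+1}<r_n<\tfrac{1}{12n}$ for the remainder $r_n$ in $\ln\Gamma(\cdot)$, needed to pass from $e^{\lambda}\lambda^{-n}\Gamma(n)$ to the Gaussian normalization $\sqrt{2\pi/n}\,e^{a^2/2}$. The asymmetry between the two bounds — in particular the extra factor $\tfrac{1}{\phi(a)}\tfrac{1}{12n-1}$ in the denominator of \eqref{eq:LB} — reflects the one-sided nature of these Stirling estimates and of the sign of $K-K(0)$.

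The hard part will be making every estimate hold uniformly for all $n>\lambda$, that is, for every $\rho\in(0,1)$, rather than only asymptotically as $\rho\uparrow 1$. This forbids a naive Taylor expansion of $K$ about $u=0$ and instead demands monotonicity, convexity, and sign information about the transformed kernel $K(u)$ (equivalently, about Temme's coefficient function) across the whole range $u\in[-a,\infty)$, together with careful bookkeeping to land precisely on the constants $\tfrac23$ and $\tfrac{1}{12n-1}$ in the statement. I expect this sign and monotonicity analysis of $K$, rather than the substitution itself, to be the main obstacle.
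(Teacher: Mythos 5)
You are proving a statement the paper itself does not prove: Theorem \ref{BoundsForErlangC} is quoted verbatim from Janssen et al.\ \cite{janssen_08}, so the only fair comparison is with that source --- and your route is essentially theirs. Your reductions are correct and verifiable: integrating by parts in \eqref{ContErlangC} does give the exact identity $\bar{\alpha}(n,\lambda)^{-1}=1+(n-\lambda)A=\rho+\gamma\left(\sqrt{n}\,A+\tfrac{1}{\sqrt{n}}\right)$ with $A=\int_0^\infty(1+t)^{n-1}e^{-\lambda t}\,dt$, so \eqref{eq:UB} and \eqref{eq:LB} are indeed equivalent to a lower and an upper bound on $\sqrt{n}\,A$. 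The Temme-style substitution checks out as well: $g(t^*)=n(1-\rho+\ln\rho)=-a^2/2$ recovers \eqref{eq:alpha}, the $(1+t)$ in the Jacobian cancels the missing power in $(1+t)^{n-1}$, $K(0)=1/\sqrt{n}$ since $g''(t^*)=\lambda^2/n$, replacing $K$ by $K(0)$ yields $e^{a^2/2}\sqrt{2\pi}\,\Phi(a)=\Phi(a)/\phi(a)$, and $K'(0)=-1/(3n)$ combined with $\int_{-a}^\infty u\,e^{-u^2/2}\,du=e^{-a^2/2}$ correctly explains why the correction $\tfrac{2}{3\sqrt n}$ is independent of $a$. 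Even your attribution of the $12n-1$ to Stirling-type control of the normalization (via $e^{1/(12n)}\le 12n/(12n-1)$) is consistent with how the constants arise in \cite{janssen_08}.

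The genuine gap is that neither inequality is actually proved: everything after the substitution is a plan, and the deferred step is the entire mathematical content of the theorem. Concretely, \eqref{eq:UB} would follow from the single pointwise inequality $K(u)\ge\tfrac{1}{\sqrt{n}}\left(1-\tfrac{u}{3\sqrt{n}}\right)$ for all $u\in[-a,\infty)$, integrated against $e^{-u^2/2}$; already at the left endpoint this amounts to $a/\gamma\ge 1+\tfrac{a}{3\sqrt n}$, which requires the nontrivial ordering $\gamma\le a\le\beta$ plus a quantitative margin, and nothing in your sketch establishes it --- local Taylor data ($K(0)$, $K'(0)$) cannot, since the bound must hold globally in $u$ and uniformly in $\rho\in(0,1)$. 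The same goes for the companion upper bound on $K$ whose slack produces $\tfrac{1}{\phi(a)}\tfrac{1}{12n-1}$ in \eqref{eq:LB}. You candidly flag this sign/monotonicity analysis as the ``main obstacle,'' and that is exactly right: what you have is a correct reduction with the correct constants identified, mirroring the JVLZ derivation, but the global kernel estimates that constitute the proof remain entirely undone, so the proposal as written does not yet prove \eqref{eq:UB} or \eqref{eq:LB}.
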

Using equation \eqref{eq:betas} we can express the continuous Erlang-C formula and its bounds in terms of $\beta$ and $\lambda$.
Define $$\tilde{\alpha}(\beta,\lambda)=\bar{\alpha} (\lambda+\beta\sqrt{\lambda},\lambda)$$ as the continuous Erlang-C formula with respect to $\beta$, where $\bar{\alpha}(\cdot,\cdot)$ is defined in equation \eqref{ContErlangC}. 
Notice that the formula above implies that for a particular arrival rate $\lambda$, that a \emph{square-root safety} 
staffing level 
$\lambda+\beta\sqrt{\lambda}$ is used. 

Now, let $UB(\beta,\lambda)$ represent the upper bound on the Erlang-C, as given on the right-hand side of inequality \eqref{eq:UB}, and let $LB(\beta,\lambda)$ represent the lower bound, as given on the right-hand side of inequality \eqref{eq:LB}. One then might hope that if $\lambda$ grows large and square-root safety staffing is used, 
then the upper and lower bounds converge to the Erlang-C formula. In fact, this is already known
to be true via the Halfin-Whitt result. What we demonstrate here is that the convergence is actually
uniform in the square-root staffing factor $\beta$. This uniform convergence is useful for proving asymptotic approximation
results for the optimization problems we formulate in later sections. 

Before we present the main result we state a series of lemmas needed to establish
the primary convergence result of this section. The proofs of the next five lemmas
are given in the Appendix. 
\begin{lemma}
\label{UBDecrease}
For positive $\lambda$, $UB(\beta,\lambda)$  is strictly decreasing in $\lambda$ for any fixed $\beta>0$.
\end{lemma}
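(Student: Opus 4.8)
The plan is to show that the reciprocal of $UB(\beta,\lambda)$, namely the bracketed denominator on the right-hand side of \eqref{eq:UB}, is \emph{strictly increasing} in $\lambda$ for each fixed $\beta>0$; since that denominator is positive, this is equivalent to $UB$ being strictly decreasing. Writing $n=\lambda+\beta\sqrt{\lambda}$ (the square-root staffing level) and using \eqref{eq:gamma}, I would first record the elementary identity
\begin{equation*}
\gamma\,\frac{2}{3\sqrt{n}}=\frac{2(n-\lambda)}{3n}=\frac{2}{3}(1-\rho),
\end{equation*}
so that the denominator collapses to
\begin{equation*}
D(\beta,\lambda):=\rho+\gamma\left(\frac{\Phi(a)}{\phi(a)}+\frac{2}{3\sqrt{n}}\right)=\frac{\rho+2}{3}+\gamma\,\frac{\Phi(a)}{\phi(a)}.
\end{equation*}
Thus $D$ splits into manifestly positive ingredients, and it suffices to argue that each is nondecreasing in $\lambda$ with at least one strictly increasing.

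The first term is immediate: $\rho=\lambda/n=1/(1+\beta/\sqrt{\lambda})$ is strictly increasing in $\lambda$, which also makes $\gamma=\beta\sqrt{\rho}$ strictly increasing and the term $\tfrac13(\rho+2)$ strictly increasing. For the product $\gamma\,\Phi(a)/\phi(a)$ I would use that a product of positive factors is strictly increasing once one factor is strictly increasing and the other nondecreasing. Since $\gamma$ is already strictly increasing and positive, it remains to show that $\Phi(a)/\phi(a)$ is nondecreasing. Because $\tfrac{d}{da}\big(\Phi(a)/\phi(a)\big)=1+a\,\Phi(a)/\phi(a)>0$ for $a\ge 0$, the map $a\mapsto\Phi(a)/\phi(a)$ is increasing on $[0,\infty)$, and $a\ge 0$ by definition \eqref{eq:alpha}. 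Hence the whole question reduces to showing that $a$ is nondecreasing in $\lambda$.

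The main work, and the only nonroutine step, is this monotonicity of $a$. Setting $u=\sqrt{\lambda}$ (a strictly increasing reparametrization) and using $n=u(u+\beta)$, $\rho=u/(u+\beta)$, a short simplification of \eqref{eq:alpha} gives
\begin{equation*}
a^2=-2n(1-\rho+\ln\rho)=2u\left[(u+\beta)\ln\frac{u+\beta}{u}-\beta\right].
\end{equation*}
Differentiating in $u$ (the derivative of $\ln\frac{u+\beta}{u}$ cancels one of the explicit $\beta$'s) yields
\begin{equation*}
\frac{d}{du}\,a^2=2(2u+\beta)\ln\frac{u+\beta}{u}-4\beta,
\end{equation*}
so that $a^2$ is increasing precisely when $\ln(1+t)>\tfrac{2t}{2+t}$ with $t=\beta/u>0$. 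This scalar inequality holds for all $t>0$: the difference vanishes at $t=0$ and has derivative $t^2/[(1+t)(2+t)^2]>0$. (The same device with the weaker inequality $\ln(1+t)>t/(1+t)$ confirms $a^2>0$, so in fact $a>0$ throughout.) Therefore $a^2$, hence $a$, is strictly increasing in $u$ and thus in $\lambda$, which completes the argument that $D$ is strictly increasing and $UB(\beta,\lambda)$ strictly decreasing in $\lambda$. I expect the derivative simplification leading to the clean inequality $\ln(1+t)>2t/(2+t)$ to be the crux; everything else is bookkeeping.
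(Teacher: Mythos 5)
Your proof is correct and follows essentially the same route as the paper's: both show the bracketed denominator of \eqref{eq:UB} is strictly increasing in $\lambda$, reduce the $\gamma\,\Phi(a)/\phi(a)$ term to the monotonicity of $a$ in $\lambda$, and rest on the same scalar inequality $\ln(1+t)>\tfrac{2t}{2+t}$ (which is exactly the paper's claim $g(\beta,\lambda)>0$, there established via $\lim_{\lambda\to\infty}g=0$ and $\partial g/\partial\lambda<0$ rather than your direct derivative at $t=0$). Your collapsing of $\rho+\tfrac{2\gamma}{3\sqrt{n}}$ to $\tfrac{1}{3}(\rho+2)$ via the identity $\gamma\cdot\tfrac{2}{3\sqrt{n}}=\tfrac{2}{3}(1-\rho)$ is a pleasant streamlining of the paper's derivative computation for that term, but does not change the substance of the argument.
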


\begin{lemma}
\label{1}
Let $n=\lambda+\beta\sqrt{\lambda}$ and let $\gamma$ be defined as in \eqref{eq:gamma}. Then $\gamma/\left(12n-1\right)$ converges uniformly to 0, in $\beta$,
as $\lambda\rightarrow\infty$. That is $$\lim_{\lambda\rightarrow\infty}\sup_{\beta>0}\frac{\gamma}{12n-1}=0.$$
\end{lemma}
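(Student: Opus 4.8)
The plan is to produce an explicit upper bound on $\gamma/(12n-1)$ that is independent of $\beta$ and vanishes as $\lambda\to\infty$; since the supremum over $\beta>0$ is then dominated by this single bound, uniform convergence follows immediately. The key observation is that the $\beta$-dependence in the numerator and denominator essentially cancels, so uniformity comes almost for free once the quantities are written correctly.

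First I would control the numerator. Using the identity $\gamma=\beta\sqrt{\rho}$ from \eqref{eq:gamma}, and noting that $n=\lambda+\beta\sqrt{\lambda}>\lambda$ whenever $\beta>0$, we have $\rho=\lambda/n<1$ and therefore $\gamma=\beta\sqrt{\rho}<\beta$. This bounds the numerator by $\beta$ alone, which is the crucial step. Next I would bound the denominator from below. Since $n-\lambda=\beta\sqrt{\lambda}$, we can write
\begin{equation*}
12n-1=(12\lambda-1)+12\beta\sqrt{\lambda}.
\end{equation*}
For all $\lambda\ge 1$ the term $12\lambda-1$ is nonnegative, so $12n-1\ge 12\beta\sqrt{\lambda}$ for every $\beta>0$.

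Combining the two bounds gives
\begin{equation*}
\frac{\gamma}{12n-1}\le\frac{\beta}{12\beta\sqrt{\lambda}}=\frac{1}{12\sqrt{\lambda}},
\end{equation*}
valid for every $\beta>0$ once $\lambda\ge 1$. The factor $\beta$ cancels, so the right-hand side is a uniform bound; taking the supremum over $\beta>0$ yields $\sup_{\beta>0}\gamma/(12n-1)\le 1/(12\sqrt{\lambda})$, which tends to $0$ as $\lambda\to\infty$. This establishes the claimed uniform convergence.

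I do not expect a genuine obstacle here; the argument is a one-line estimate once the right cancellation is spotted. The only point requiring care is the $-1$ in the denominator: the naive bound $12n-1\ge 12\beta\sqrt{\lambda}$ must be justified by absorbing the $-1$ into the nonnegative $12\lambda$ term, which is why I restrict to $\lambda\ge 1$ (harmless, since we take $\lambda\to\infty$). One should also confirm that the estimate is not vacuous for small $\beta$, but because $\beta$ cancels exactly, the bound $1/(12\sqrt{\lambda})$ holds uniformly down to arbitrarily small $\beta$, so no separate small-$\beta$ analysis is needed.
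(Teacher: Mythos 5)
Your proof is correct, and it takes a genuinely different and more elementary route than the paper's. The paper treats $g(\beta,\lambda)=\gamma/(12n-1)$ by calculus: it differentiates in $\beta$, solves $\partial g/\partial\beta=0$ to find the unique positive critical point $\widehat{\beta}=\bigl(-1+12\lambda+\sqrt{1-120\lambda+1296\lambda^{2}}\bigr)/(24\sqrt{\lambda})$, argues from the boundary behavior ($g(0,\lambda)=0$ and $g\to 0$ as $\beta\to\infty$) that $\widehat{\beta}$ is the global maximizer, and then checks that $g(\widehat{\beta},\lambda)\to 0$ as $\lambda\to\infty$. You instead sidestep the optimization entirely with the two estimates $\gamma=\beta\sqrt{\rho}<\beta$ (since $\rho<1$ for $\beta>0$) and $12n-1=(12\lambda-1)+12\beta\sqrt{\lambda}\ge 12\beta\sqrt{\lambda}$ for $\lambda\ge 1$, so that $\beta$ cancels and $\sup_{\beta>0}\gamma/(12n-1)\le 1/(12\sqrt{\lambda})$. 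Both arguments are sound; yours is shorter, avoids the algebra of locating $\widehat{\beta}$ and the implicit verification that it is a global maximizer, and delivers an explicit nonasymptotic bound with the rate $O(\lambda^{-1/2})$ visible at a glance. What the paper's approach buys in exchange is the exact maximizing $\beta$ and hence the sharp asymptotic constant (evaluating at $\widehat{\beta}\sim\sqrt{\lambda}$ gives $g(\widehat{\beta},\lambda)\sim 1/(24\sqrt{2\lambda})$, so your bound $1/(12\sqrt{\lambda})$ is within a constant factor of optimal), but none of that precision is needed for the lemma as stated, and your handling of the $-1$ via the restriction $\lambda\ge 1$ is exactly the right care at the only delicate point.
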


\begin{lemma}
\label{3}
Let $n=\lambda+\beta\sqrt{\lambda}$ and let $\rho$, $\gamma$, and $a$ be defined as in Theorem \ref{BoundsForErlangC}. Then $\rho\phi(a)+\gamma\Phi(a)$ is strictly increasing in $\beta$ for any sufficiently large $\lambda$.
\end{lemma}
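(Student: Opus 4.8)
The plan is to exploit the fact that $n=\lambda+\beta\sqrt{\lambda}$ is a strictly increasing affine function of $\beta$ for fixed $\lambda$, so that ``strictly increasing in $\beta$'' is equivalent to ``strictly increasing in $n$'' on the range $n>\lambda$. Writing $g:=\rho\phi(a)+\gamma\Phi(a)$, it then suffices to show $dg/dn>0$. First I would record the elementary derivatives with $\lambda$ held fixed: $d\rho/dn=-\rho/n$ and $d\gamma/dn=(1+\rho)/(2\sqrt{n})$. The enabling simplification is the identity $\frac{d}{dn}a^{2}=-2\ln\rho$ (equivalently $a\,\frac{da}{dn}=-\ln\rho$), obtained by differentiating $a^{2}=-2n(1-\rho+\ln\rho)$ and substituting $d\rho/dn=-\rho/n$; the $\ln\rho$ terms collapse cleanly, and since $\ln\rho<0$ this also confirms $a$ is increasing in $n$.

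Substituting these into $dg/dn$ and using $\phi'(a)=-a\phi(a)$, I expect to obtain
$$\frac{dg}{dn}=\phi(a)\left[-\frac{\rho}{n}+\ln\rho\left(\rho-\frac{\gamma}{a}\right)\right]+\Phi(a)\,\frac{1+\rho}{2\sqrt{n}}.$$
The second term is strictly positive, so the whole task reduces to controlling the bracket $B:=-\rho/n+\ln\rho\,(\rho-\gamma/a)$.

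The crux is to show $\rho<\gamma/a$ for every $\rho\in(0,1)$. Since both sides are positive, squaring and substituting $\gamma=\sqrt{n}(1-\rho)$ together with $a^{2}=-2n(1-\rho+\ln\rho)$ reduces this to the $n$-free inequality
$$F(\rho):=(1-\rho)^{2}+2\rho^{2}(1-\rho+\ln\rho)>0,\qquad \rho\in(0,1).$$
I would prove this by a derivative chase: one checks $F(1)=F'(1)=F''(1)=0$, while $F'''(\rho)=-12+4/\rho$ changes sign only at $\rho=1/3$. Tracking the signs back up — $F''$ is negative then positive on $(0,1)$ with a single sign change, which together with $F'(1)=0$ and $F'(0^{+})=-2$ forces $F'<0$ throughout $(0,1)$ — shows that $F$ is strictly decreasing with $F(1)=0$, hence $F>0$ on $(0,1)$. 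This is the step I expect to be the main obstacle, since everything downstream depends on the sign of $\rho-\gamma/a$.

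With $\rho-\gamma/a<0$ in hand, note $\ln\rho<0$ on $(0,1)$, so $\ln\rho\,(\rho-\gamma/a)>0$ and $B$ is the sum of $-\rho/n<0$ and a positive term. If $B\ge0$ the positivity of $dg/dn$ is immediate. If $B<0$, then $0<-B=\rho/n-\ln\rho\,(\rho-\gamma/a)<\rho/n$, so the negative contribution $\phi(a)B$ is of order $1/n$ while the positive term $\Phi(a)(1+\rho)/(2\sqrt{n})$ is of order $1/\sqrt{n}$. Using the crude bounds $\phi(a)\le 1/\sqrt{2\pi}$ and $\Phi(a)\ge 1/2$ for $a\ge0$, it suffices to verify $\frac{1}{\sqrt{2\pi}}\cdot\frac{\rho}{n}<\frac{1}{2}\cdot\frac{1+\rho}{2\sqrt{n}}$, which holds (since $\rho<1$) once $\sqrt{n}$ exceeds a small absolute constant, and hence for all $\lambda$ above a fixed threshold. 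This yields $dg/dn>0$ uniformly over $\beta>0$ for sufficiently large $\lambda$, completing the argument.
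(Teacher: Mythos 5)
Your proof is correct, and it shares its skeleton with the paper's: pass from $\beta$ to $n$, differentiate $\rho\phi(a)+\gamma\Phi(a)$ using $a\,\frac{da}{dn}=-\ln\rho$ (your derivative formula agrees exactly with the paper's, just written in the variables $(\rho,\gamma,a)$ instead of $(\lambda,n)$), observe that nonnegativity of the $\ln\rho$ terms reduces to $\rho a\le\gamma$, i.e.\ $\lambda a\le\sqrt{n}\,(n-\lambda)$, and absorb the lone negative term $-\rho\phi(a)/n$ into the positive term $\Phi(a)(1+\rho)/(2\sqrt{n})$. Where you genuinely diverge is at the crux inequality. The paper proves $\lambda a\le\sqrt{n}(n-\lambda)$ by quoting equation (50) of Janssen et al., $a=\beta-\tfrac{1}{6}\beta^{2}\lambda^{-1/2}+O(1/\lambda)$, to conclude $a<\beta$ for sufficiently large $\lambda$, whence $\lambda a<\lambda\beta=\sqrt{\lambda}(n-\lambda)<\sqrt{n}(n-\lambda)$; this is exactly where the hypothesis ``sufficiently large $\lambda$'' enters their argument, and the cited expansion is pointwise in $\beta$. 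You instead prove the exact, $n$-free inequality $F(\rho)=(1-\rho)^{2}+2\rho^{2}(1-\rho+\ln\rho)>0$ on $(0,1)$ by a third-derivative sign chase, which checks out: $F(1)=F'(1)=F''(1)=0$, $F'''(\rho)=-12+4/\rho$ changes sign only at $\rho=1/3$, $F''(1/3)=8-4\ln 3>0$ while $F''(0^{+})=-\infty$ and $F''(1)=0$, forcing the single sign change you describe, and $F'(0^{+})=-2$ with $F'(1)=0$ then gives $F'<0$ on $(0,1)$. This yields $\rho a<\gamma$ for \emph{all} $\beta,\lambda>0$, self-contained and uniform in $\beta$ by construction (note your target $a<\gamma/\rho$ is even weaker than the paper's $a<\beta$, since $\gamma/\rho>\beta$ for $\rho<1$). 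Your treatment of the remaining Gaussian terms, via $\phi(a)\le 1/\sqrt{2\pi}$ and $\Phi(a)\ge 1/2$, mirrors the paper's bound $(-\lambda\phi(0)+n\Phi(0))/n^{2}>0$, which likewise needs $\lambda$ above a small threshold ($\lambda\ge 1$ suffices in both cases). The net effect is that your version localizes the ``sufficiently large $\lambda$'' requirement to a harmless absolute constant, so the monotonicity in fact holds essentially for all parameter values, at the cost of a longer elementary computation; the paper's route is shorter but leans on the cited asymptotic expansion and delivers only the large-$\lambda$ statement.
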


\begin{lemma}
\label{2}
Let $n=\lambda+\beta\sqrt{\lambda}$ and let $\rho$, $\gamma$, and $a$ be defined as in Theorem \ref{BoundsForErlangC}. Then $\rho\phi(a)+\gamma\Phi(a)+\frac{2\gamma\phi(a)}{3\sqrt{n}}+\frac{\gamma}{(12n-1)}$ is
uniformly bounded away from 0 for all sufficiently large $\lambda$, and all $\beta>0$. Specifically,
\begin{equation}
\label{211}
\inf_{\lambda\geq M, \beta>0}\rho\phi(a)+\gamma\Phi(a)+\frac{2\gamma\phi(a)}{3\sqrt{n}}+\frac{\gamma}{(12n-1)}>0,
\end{equation}
for $M$ sufficiently large.
\end{lemma}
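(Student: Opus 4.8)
The plan is to reduce the four-term expression to its first two summands and exploit the monotonicity already recorded in Lemma \ref{3}. Since $\rho>0$, $\phi(a)>0$, $\gamma=\beta\sqrt{\rho}>0$ for every $\beta>0$, and $12n-1>0$, all four terms in \eqref{211} are strictly positive. Hence it suffices to bound the partial sum $\rho\phi(a)+\gamma\Phi(a)$ away from zero, because dropping the two remaining nonnegative terms only decreases the expression. By Lemma \ref{3}, for all sufficiently large $\lambda$ the map $\beta\mapsto\rho\phi(a)+\gamma\Phi(a)$ is strictly increasing on $\beta>0$, so for each such fixed $\lambda$ its infimum over $\beta>0$ is attained in the limit, namely $\inf_{\beta>0}\{\rho\phi(a)+\gamma\Phi(a)\}=\lim_{\beta\downarrow0}\{\rho\phi(a)+\gamma\Phi(a)\}$.

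Next I would evaluate that one-sided limit. Fix $\lambda$ and let $\beta\downarrow0$. Then $n=\lambda+\beta\sqrt{\lambda}\to\lambda$, so $\rho=\lambda/n\to1$ and $\gamma=\beta\sqrt{\rho}\to0$. The essential point is that $a\to0$ as well: writing $a^2=-2n(1-\rho+\ln\rho)$ and using the expansion $1-\rho+\ln\rho=-\tfrac12(1-\rho)^2\bigl(1+O(1-\rho)\bigr)$ valid as $\rho\to1$, we obtain $a^2=n(1-\rho)^2\bigl(1+O(1-\rho)\bigr)$, which tends to $0$ since $n\to\lambda<\infty$ and $1-\rho\to0$. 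Consequently $\phi(a)\to\phi(0)=1/\sqrt{2\pi}$ and $\Phi(a)\to\Phi(0)=\tfrac12$, so that $\rho\phi(a)+\gamma\Phi(a)\to 1\cdot\phi(0)+0=1/\sqrt{2\pi}$.

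To finish, I would combine strict monotonicity with this limit: for every $\beta>0$ and every sufficiently large $\lambda$ we then have $\rho\phi(a)+\gamma\Phi(a)>1/\sqrt{2\pi}$. Crucially, the limiting value $1/\sqrt{2\pi}$ does not depend on $\lambda$, so the per-$\lambda$ infima all coincide and $\inf_{\lambda\ge M,\,\beta>0}\{\rho\phi(a)+\gamma\Phi(a)\}\ge 1/\sqrt{2\pi}>0$, where $M$ is taken at least as large as the threshold required by Lemma \ref{3}. This immediately yields \eqref{211}. The step needing the most care is the claim $a\to0$ as $\beta\downarrow0$: the factor $n$ grows while $1-\rho+\ln\rho$ shrinks, so one must verify the second-order vanishing of $1-\rho+\ln\rho$ at $\rho=1$ to see that the product genuinely tends to zero. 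Equally, I would be careful to note that because the infimum in $\beta$ equals the \emph{same} constant for every admissible $\lambda$, there is no residual nonuniformity in $\lambda$ that could erode the bound.
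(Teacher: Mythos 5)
Your proposal is correct and follows essentially the same route as the paper's proof: drop the two extra nonnegative terms, invoke Lemma \ref{3} for strict monotonicity of $\rho\phi(a)+\gamma\Phi(a)$ in $\beta$, and bound the infimum by its value at $\beta=0$, namely $\phi(0)=1/\sqrt{2\pi}$, which is independent of $\lambda$. The only difference is cosmetic: the paper simply evaluates $h(0,\lambda)=\phi(0)$ directly, whereas you verify the one-sided limit $\beta\downarrow 0$ in detail, including the second-order vanishing of $1-\rho+\ln\rho$ that forces $a\to 0$ --- a step the paper leaves implicit.
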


\begin{lemma}
\label{UBDecreaseInBeta}
 For positive $\beta$, $UB({\beta,\lambda})$ is strictly decreasing in $\beta$ for any fixed $\lambda>0$.
\end{lemma}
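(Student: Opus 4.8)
The plan is to pass to the reciprocal of $UB$ and reduce the claim to a one-variable monotonicity. Since $n=\lambda+\beta\sqrt\lambda$ is strictly increasing in $\beta$ for fixed $\lambda$, it is equivalent to show that $UB$ is strictly decreasing in $n$ on $(\lambda,\infty)$, or, since $UB$ is the reciprocal of a positive quantity, that
\[
D:=\rho+\gamma\Big(\frac{\Phi(a)}{\phi(a)}+\frac{2}{3\sqrt n}\Big)
\]
is strictly increasing in $n$. The first simplification I would make is to notice that the awkward final term collapses: since $\gamma=(n-\lambda)/\sqrt n$, we have $\frac{2\gamma}{3\sqrt n}=\frac23(1-\rho)$, so that $D=\frac23+\frac{\rho}{3}+\gamma\,\frac{\Phi(a)}{\phi(a)}$. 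This is the key structural observation, because it isolates the single term whose $n$-dependence points the ``wrong'' way, namely $\frac{\rho}{3}$, which decreases as $n$ grows.

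Next I would differentiate $D$ in $n$ (with $\lambda$ constant), using three clean identities. Writing $\rho=\lambda/n$ gives $\frac{d\rho}{dn}=-\rho/n$ and $\frac{d\gamma}{dn}=\frac{1+\rho}{2\sqrt n}$. Differentiating $a^2=-2n(1-\rho+\ln\rho)$ yields the pleasant relation $a\,\frac{da}{dn}=-\ln\rho=\ln(n/\lambda)>0$, so $a$, and hence $\Phi(a)/\phi(a)$, is increasing in $n$. Finally, using $\phi'(a)=-a\phi(a)$, the ratio obeys $\frac{d}{da}\frac{\Phi(a)}{\phi(a)}=1+a\,\frac{\Phi(a)}{\phi(a)}$. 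Combining these I would obtain
\[
\frac{dD}{dn}=-\frac{\rho}{3n}+\frac{1+\rho}{2\sqrt n}\,\frac{\Phi(a)}{\phi(a)}+\frac{\gamma(-\ln\rho)}{a}\Big(1+a\,\frac{\Phi(a)}{\phi(a)}\Big).
\]

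The goal is then to show this is strictly positive for $n>\lambda$. Every term except $-\rho/(3n)$ is manifestly positive, since $a>0$, $\gamma>0$, $-\ln\rho>0$, and $\Phi(a)/\phi(a)>0$ on this range, so it suffices to show the second term alone beats the lone negative term, i.e.\ $\frac{1+\rho}{2\sqrt n}\frac{\Phi(a)}{\phi(a)}\ge\frac{\rho}{3n}$. Here I would invoke the lower bound $\frac{\Phi(a)}{\phi(a)}\ge\frac{\Phi(0)}{\phi(0)}=\sqrt{\pi/2}$, valid because the ratio is increasing in $a$ and $a\ge0$; together with $1+\rho>1$ and $\rho<1$ this reduces matters to a scalar inequality such as $\tfrac32\sqrt n\,\sqrt{\pi/2}\ge\rho$. (An alternative route, connecting to Lemma~\ref{3}, writes $UB=\phi(a)/[\rho\phi(a)+\gamma\Phi(a)+\tfrac{2\gamma\phi(a)}{3\sqrt n}]$ and argues that the numerator $\phi(a)$ is decreasing in $n$ while the denominator increases; I find the direct derivative cleaner, since the extra term in the denominator is not itself monotone.)

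The main obstacle I anticipate is precisely this final sign verification, and it is delicate only as $\beta\downarrow0$, equivalently $n\downarrow\lambda$ and $\rho\uparrow1$: there $\gamma\to0$ and $a\to0$, so the positive contribution of the $\gamma\,\Phi(a)/\phi(a)$ term degenerates and the comparison rests entirely on the crude Mills-ratio bound above. Confirming that the $O(1/\sqrt n)$ positive term genuinely dominates the $O(1/n)$ negative term uniformly on the relevant range is the technical heart of the argument; away from the boundary $\beta=0$ the strictly positive third term supplies ample slack and strict monotonicity is immediate.
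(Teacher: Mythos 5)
Your proposal is, in substance, the paper's own proof. The paper also inverts $UB$ and differentiates in $n$, and its displayed derivative is term-for-term identical to yours: it writes $-\lambda/(3n^2)$ and $(\lambda+n)/(2n\sqrt{n})$ where you write $-\rho/(3n)$ and $(1+\rho)/(2\sqrt{n})$; it discards the third term as nonnegative via $\partial a/\partial n=\ln(n/\lambda)/a>0$ exactly as you do; and it bounds the remaining two terms with the same device, the monotone ratio $\Phi(a)/\phi(a)\geq\Phi(0)/\phi(0)=\sqrt{\pi/2}$. Your observation that $\frac{2\gamma}{3\sqrt{n}}=\frac{2}{3}(1-\rho)$, so $D=\frac{2}{3}+\frac{\rho}{3}+\gamma\,\Phi(a)/\phi(a)$, is a tidy repackaging but changes nothing mathematically.

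The step you leave open is precisely where the paper overreaches, and your caution is vindicated --- more strongly than you suggest. The paper's assertion that $-\frac{\lambda}{3n^2}+\frac{\lambda+n}{2n\sqrt{n}}\frac{\Phi(0)}{\phi(0)}$ ``is strictly positive'' is equivalent to $\frac{3}{2}\sqrt{\pi/2}\,(1+\rho)\sqrt{n}>\rho$; this holds whenever $n\geq 8/(9\pi)\approx 0.283$ (use $1+\rho>1$ and $\rho<1$), hence for every $\lambda\geq 0.283$ since $n>\lambda$, but it fails for small $n$. Moreover the gap cannot be closed as stated: the lemma is actually false for small $\lambda$. Take $\lambda=0.01$ and $n=0.02$ (i.e.\ $\beta=0.1$): your derivative evaluates to roughly $-8.33+7.14+0.62\approx-0.57<0$, and directly one checks $D\approx 0.92853$ at $n=0.02$ versus $D\approx 0.92826$ at $n=0.021$, so $UB$ is locally \emph{increasing} in $\beta$ there. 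This also refutes your closing remark that away from $\beta=0$ the third term supplies ``ample slack'': at $\beta=0.1$ its contribution ($\approx 0.62$) does not cover the deficit, and for $\lambda=0.01$ monotonicity only sets in once $\beta$ is large enough that $n\gtrsim 8/(9\pi)$. The danger zone is small $n$, not merely $\beta\downarrow 0$. The honest repair, for your writeup and the paper's alike, is to state the lemma for $\lambda$ (or $n$) bounded below, e.g.\ $\lambda\geq 1$, where your chain $\frac{1+\rho}{2\sqrt{n}}\frac{\Phi(a)}{\phi(a)}\geq\frac{\sqrt{\pi/2}}{2\sqrt{n}}>\frac{\rho}{3n}$ closes immediately and strictness for $\beta>0$ comes from the third term; this restriction is harmless downstream, since the lemma is invoked only in regimes where $\lambda\rightarrow\infty$.
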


Finally, we are ready to state and prove the main convergence result of this section,
Theorem \ref{UniConvOfBounds}. This result plays a prominent role in proving the
asymptotic optimality results in Sections~\ref{chapter-deterministic-arrival-rates} and \ref{chapter-stochastic-arrival-rates}.

\begin{theorem}
\label{UniConvOfBounds}
The upper bound of \eqref{eq:UB} and lower bound of \eqref{eq:LB} converge uniformly, over positive $\beta$,
 to the Erlang-C formula as $\lambda\rightarrow\infty$. That is $$\lim_{\lambda\rightarrow\infty}\sup_{\beta>0}\left[\tilde{\alpha}(\beta,\lambda)-LB(\beta,\lambda)\right]=0$$ and $$\lim_{\lambda\rightarrow\infty}\sup_{\beta>0}\left[UB(\beta,\lambda)-\tilde{\alpha}(\beta,\lambda)\right]=0.$$
\end{theorem}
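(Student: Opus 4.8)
The plan is first to collapse the two-sided claim into a single estimate. Theorem~\ref{BoundsForErlangC} gives, for every $\beta>0$ with $n=\lambda+\beta\sqrt\lambda$, the sandwich $LB(\beta,\lambda)\le\tilde\alpha(\beta,\lambda)\le UB(\beta,\lambda)$. Hence both quantities appearing in the theorem statement are nonnegative and are each dominated by the gap $UB(\beta,\lambda)-LB(\beta,\lambda)$, so it suffices to prove $\lim_{\lambda\to\infty}\sup_{\beta>0}[UB(\beta,\lambda)-LB(\beta,\lambda)]=0$; both stated limits then follow simultaneously.

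Let $D_{UB}$ and $D_{LB}$ denote the bracketed denominators on the right-hand sides of \eqref{eq:UB} and \eqref{eq:LB}, so that $UB=1/D_{UB}$ and $LB=1/D_{LB}$. I would write the gap as $UB-LB=(D_{LB}-D_{UB})/(D_{UB}D_{LB})$ and multiply numerator and denominator by $\phi(a)^2$, obtaining
$$UB-LB=\frac{\phi(a)^2(D_{LB}-D_{UB})}{(\phi(a)D_{UB})(\phi(a)D_{LB})}.$$
The point of this normalization is that $\phi(a)D_{UB}=\rho\phi(a)+\gamma\Phi(a)+\frac{2\gamma\phi(a)}{3\sqrt n}$, while $\phi(a)D_{LB}$ is the same expression with $3\sqrt n$ replaced by $3\sqrt n+\frac{1}{\phi(a)(12n-1)}$; each coincides with the expression of Lemma~\ref{2} up to the single additive term $\gamma/(12n-1)$, which Lemma~\ref{1} sends to $0$ uniformly in $\beta$.

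Two uniform estimates then close the argument. For the denominator, $\phi(a)D_{UB}$ equals the Lemma~\ref{2} quantity minus $\gamma/(12n-1)$; by Lemma~\ref{2} the former exceeds a constant $c>0$ uniformly in $\beta$ for all large $\lambda$, while by Lemma~\ref{1} the subtracted term is eventually below $c/2$, so $\phi(a)D_{UB}>c/2$, and the analogous bound for $\phi(a)D_{LB}$ follows because the two differ by a term that is $O(1/(\sqrt n(12n-1)))$ uniformly. Thus the denominator stays above $c^2/4$. For the numerator, a direct computation gives $|D_{UB}-D_{LB}|=\frac{2\gamma}{3\sqrt n}\cdot\frac{\delta}{3\sqrt n+\delta}$ with $\delta=\frac{1}{\phi(a)(12n-1)}$, and the cancellation $\phi(a)^2\delta=\phi(a)/(12n-1)$ yields $\phi(a)^2|D_{UB}-D_{LB}|\le\frac{2\gamma\phi(a)}{9n(12n-1)}$. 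The key fact is that $\gamma\phi(a)$ is bounded uniformly in $\beta$: the inequality $\ln(1-x)\le-x-\tfrac12 x^2$ applied with $x=1-\rho=\gamma/\sqrt n$ shows $a=\sqrt{-2n(1-\rho+\ln\rho)}\ge\gamma$, so $\gamma\phi(a)\le\gamma\phi(\gamma)\le\sup_{t\ge0}t\phi(t)=1/\sqrt{2\pi e}$. Hence the numerator is at most $C/(n(12n-1))\to0$, and dividing by the denominator bound gives $\sup_{\beta>0}(UB-LB)\to0$.

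I expect the main obstacle to be the uniformity over the unbounded set $\beta\in(0,\infty)$. For each fixed $\beta$ the convergence is immediate from the Halfin--Whitt limit (Theorem~\ref{HWtheorem}), but as $\beta\to\infty$ the factor $\phi(a)$ collapses to $0$, so $\delta$ blows up and termwise bounds on both numerator and denominator break down. The argument succeeds only because the two delicate facts---the uniform lower bound $\phi(a)D_{UB},\phi(a)D_{LB}>c/2$ from Lemmas~\ref{1} and~\ref{2}, and the uniform bound $\gamma\phi(a)\le1/\sqrt{2\pi e}$ coming from $a\ge\gamma$---hold simultaneously across all large $\beta$; showing that the degeneracies in the numerator and denominator cancel in exactly the right way is the crux.
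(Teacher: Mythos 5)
Your proposal is correct and follows the same overall strategy as the paper's proof: both reduce the two-sided claim, via the sandwich of Theorem \ref{BoundsForErlangC}, to showing $\lim_{\lambda\rightarrow\infty}\sup_{\beta>0}\left[UB(\beta,\lambda)-LB(\beta,\lambda)\right]=0$, and both normalize by $\phi(a)$ so that Lemma \ref{1} controls the numerator and Lemma \ref{2} keeps the denominator uniformly away from $0$ in $\beta$. The local differences are worth recording. The paper factors $UB-LB$ as $UB$ times a second fraction and disposes of the first factor using Lemma \ref{UBDecreaseInBeta} together with the fact that the factor equals $1$ at $\beta=0$, so only one denominator, $\phi(a)D_{LB}$ --- which is exactly the Lemma \ref{2} expression --- needs a uniform lower bound; you instead bound both $\phi(a)D_{UB}$ and $\phi(a)D_{LB}$ below, costing an extra (easy) step but in a symmetric way. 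More substantively, your estimate $a\geq\gamma$, obtained from $\ln(1-x)\leq -x-\tfrac{1}{2}x^{2}$ with $x=1-\rho=\gamma/\sqrt{n}$, and the consequent uniform bound $\gamma\phi(a)\leq\sup_{t\geq0}t\phi(t)=1/\sqrt{2\pi e}$, is a genuinely new ingredient not in the paper: it makes the numerator $\gamma\phi(a)/(12n-1)$ of order $O(1/n)$ uniformly in $\beta$ by a one-line argument, and could even replace the role Lemma \ref{1} plays in the paper's numerator estimate (you still need Lemma \ref{1}, as you use it, only in your denominator step).

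One discrepancy must be fixed. Your ``direct computation'' $|D_{UB}-D_{LB}|=\frac{2\gamma}{3\sqrt{n}}\cdot\frac{\delta}{3\sqrt{n}+\delta}$ reads \eqref{eq:LB} with the correction $\frac{1}{\phi(a)(12n-1)}$ nested inside the denominator $3\sqrt{n}+\cdots$. That reading cannot be right: it would give $D_{LB}<D_{UB}$, hence $LB>UB$, contradicting the sandwich you invoke in your very first step. The bound actually intended (see the first display of the paper's proof, consistent with the JVLZ result) is additive: $D_{LB}=D_{UB}+\frac{\gamma}{\phi(a)(12n-1)}$, so that $\phi(a)D_{LB}$ is precisely the Lemma \ref{2} quantity. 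Fortunately your machinery handles the correct form even more easily: the numerator becomes exactly $\phi(a)^{2}\left(D_{LB}-D_{UB}\right)=\frac{\gamma\phi(a)}{12n-1}\leq\frac{1}{\sqrt{2\pi e}\,(12n-1)}$, which tends to $0$ uniformly in $\beta$ since $n\geq\lambda$. With that one-line correction, your proof stands.
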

\begin{proof}
\label{UniConvOfBoundspf}
To prove the uniform convergence, we only need to show that the upper bound, $UB(\beta,\lambda)$, uniformly converges to the lower bound, $LB(\beta,\lambda)$, i.e., $$\lim_{\lambda\rightarrow\infty}\sup_{\beta>0}\left[UB(\beta,\lambda)-LB(\beta,\lambda)\right]=0.$$ With $n=\lambda+\beta\sqrt{\lambda}$, we have
\begin{eqnarray*}
UB(\beta,\lambda)-LB(\beta,\lambda) & = & \frac{1}{\rho+\gamma\left(\frac{\Phi(a)}{\phi(a)}+\frac{2}{3\sqrt{n}}\right)}-\frac{1}{\rho+\gamma\left(\frac{\Phi(a)}{\phi(a)}+\frac{2}{3\sqrt{n}}\right)+\frac{\gamma}{\phi(a)\left(12n-1\right)}}\\
& = & \frac{1}{\rho+\gamma\left(\frac{\Phi(a)}{\phi(a)}+\frac{2}{3\sqrt{n}}\right)}\cdot\frac{\frac{\gamma}{\phi(a)\left(12n-1\right)}}{\rho+\frac{\gamma\Phi(a)}{\phi(a)}+\frac{2\gamma}{3\sqrt{n}}+\frac{\gamma}{\phi(a)\left(12n-1\right)}}.
\end{eqnarray*}
From Lemma \ref{UBDecreaseInBeta}, we have that $$\frac{1}{\rho+\gamma\left(\frac{\Phi(a)}{\phi(a)}+\frac{2}{3\sqrt{n}}\right)}$$ is strictly decreasing in $\beta$ for any
fixed $\lambda>0$. Furthermore, it is easy to verify that $$\left.\left[\rho+\gamma\left(\frac{\Phi(a)}{\phi(a)}+\frac{2}{3\sqrt{n}}\right)\right]^{-1} \right|_{\beta =0}=1 \quad\forall\lambda>0.$$ Thus
to show $$\lim_{\lambda\rightarrow\infty}\sup_{\beta>0}UB(\beta,\lambda)-LB(\beta,\lambda)=0,$$ it suffices to prove $$\lim_{\lambda\rightarrow\infty}\sup_{\beta>0}\frac{\gamma}{(12n-1)}=0,$$ and $$\inf_{\lambda\geq 1, \beta>0}\phi(a)\left(\rho+\frac{\gamma\Phi(a)}{\phi(a)}+\frac{2\gamma}{3\sqrt{n}}+\frac{\gamma}{\phi(a)(12n-1)}\right)>0.$$ These conditions hold by Lemma \ref{1} and Lemma \ref{2}, respectively.
\end{proof}


\subsection{Model Formulation}
\label{sec:model_formulation}

We now introduce the single- and multi-station models of interest in this paper. 
The manager of these systems is concerned with both staffing costs and quality
of service. In this paper, we use the probability that a customer must wait to receive service to measure the quality of service and model the trade-off between the staffing cost and this probability. 
We first consider a service center modeled by a single $M/M/n$ queue and then examine
systems with $L$ parallel $M/M/n$ queues, as depicted in Figure \ref{MultiQueueSystemDetPic}. 
\begin{figure}[htp]
\begin{center}
\includegraphics[scale=0.4]{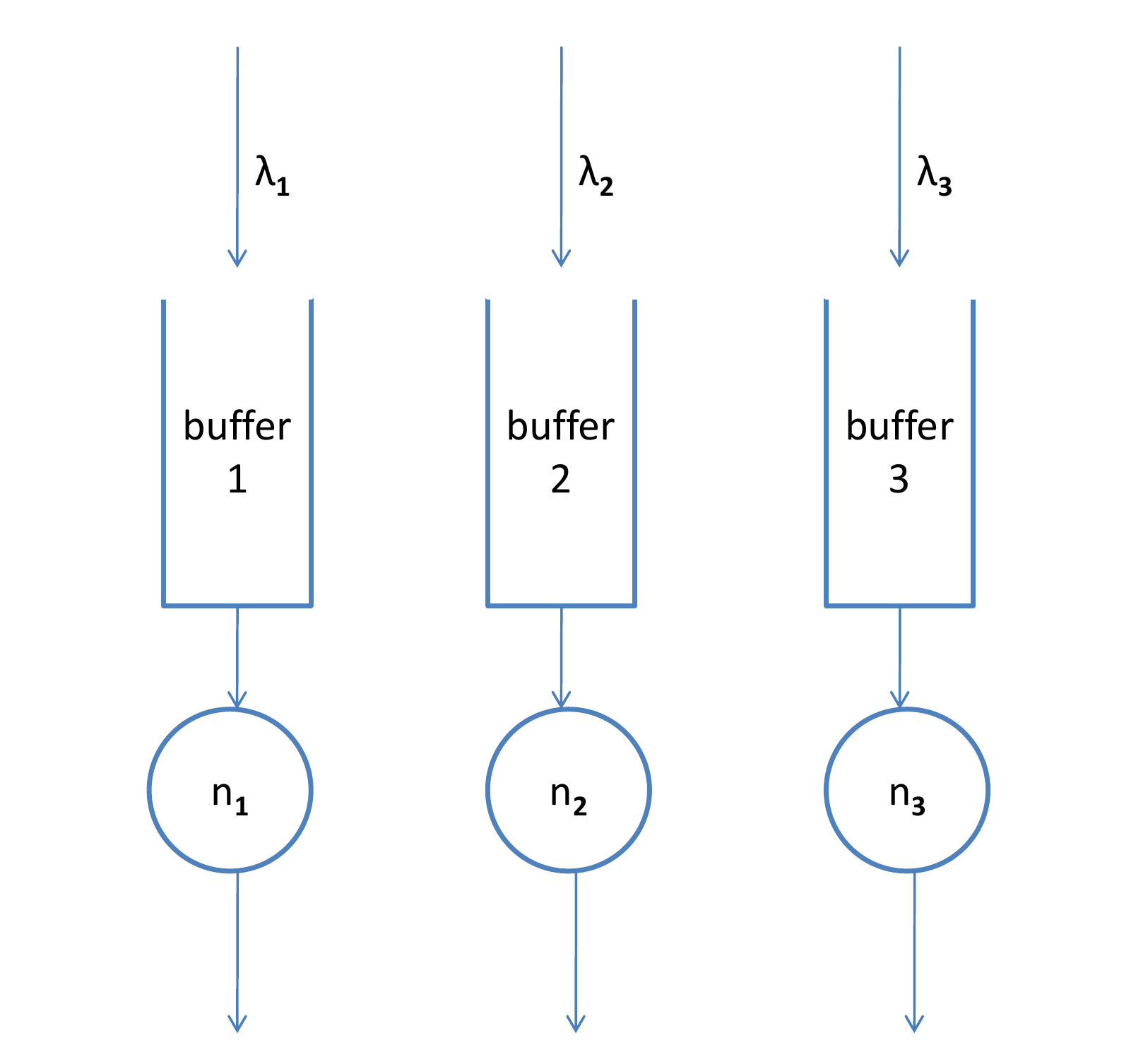}
\caption{Multi-station System}
\label{MultiQueueSystemDetPic}
\end{center}
\end{figure}

We begin with the single-station system. Because of the manager's competing measures, we face a bi-criteria optimization problem, in which we want to simultaneously minimize the staffing cost and the probability of inducing customer waiting. Let $\bar{c}(n)$ be the staffing cost function, and assume $\bar{c}(n)$ is strictly increasing in the staffing level $n$. Next, for an $M/M/n$ queue with arrival rate $\lambda$, let $W(n,\lambda)$ be a random variable
corresponding to the stationary delay (waiting for service). Our bi-criteria model for this call center problem is:
\begin{equation}
\label{eq:bicriteria}
\vmin_{n \in \mathbb{Z}_{+}}\quad \left[\bar{c}(n), \mathbb{P}\left\{ W(n,\lambda)> 0\right\}\right],
\end{equation}
where ``\vmin'' denotes vector minimization and a solution of model \eqref{eq:bicriteria} corresponds to the family of staffing levels that falls on the efficient frontier.
Of course, $\mathbb{P}\left\{ W(n,\lambda)> 0\right\}$ is equal to the steady-state
(or long-run) probability that a customer must wait for service. By PASTA this is 
equal to $\mathbb{P}\left\{ Q(n,\lambda) \ge n \right\}$.

In general, we are interested in asymptotic solutions to this bicriteria model. We consider a sequence of problems of the form \eqref{eq:bicriteria} with $\lambda\rightarrow\infty$. As $\lambda$ goes to $\infty$, the staffing level $n$ also goes to $\infty$ as does the staffing cost $\bar{c}(n)$. Hence, we need to reformulate \eqref{eq:bicriteria} to obtain a well-posed model.  We use the square-root staffing discussed earlier in the context
of the Halfin-Whitt regime. Next, replace the staffing level $n$ in \eqref{eq:bicriteria} with  $\lambda+\beta\sqrt{\lambda}$ and rewrite the model using the decision variable $\beta$:
\begin{equation}
\label{eq:bicriteria_beta}
\vmin_{\beta \geq 0}\quad \left[c(\beta), \mathbb{P}\left\{ W(\beta,\lambda)> 0\right\}\right],
\end{equation}
where $c(\beta)$ is the cost function parameterized in $\beta$ rather than in $n$ and $W(\beta,\lambda)$ 
is again the steady-state delay. Thus, we reformulate the decision problem as one of choosing
the safety-staffing parameter $\beta$, rather than the number of servers. We assume $c(\beta)$ is continuous and strictly increasing in its argument, and hence for any fixed value of $\lambda$, models \eqref{eq:bicriteria} and \eqref{eq:bicriteria_beta} are equivalent. (Recall, our goal in these bi-criteria models is to form the efficient frontier of solutions.) Moreover, the optimal value of $\beta$ does not grow large as $\lambda$ grows large, and hence the asymptotics associated with model \eqref{eq:bicriteria_beta} have finite limits.


One way to solve model \eqref{eq:bicriteria_beta} is to make one component of the objective function a constraint. For example, we can solve the bi-criteria problem by solving a family of models:
\begin{equation}
\min_{\beta \geq 0}\quad c(\beta)\\ \qquad
\mbox{s.t. } \quad \mathbb{P}\left\{ W(\beta,\lambda)> 0\right\}\leq \epsilon,
\label{eq:ConstraintModel}
\end{equation}
parameterized in the risk level threshold, $\epsilon$, where $0<\epsilon<1$.

For each $\epsilon$, by solving model \eqref{eq:ConstraintModel}, we obtain an optimal $\beta$. The staffing cost and the probability of waiting corresponding to the optimal $\beta$ give one point on the efficient frontier of the bi-criteria problem. By varying $\epsilon$ from 0 to 1, we obtain all the points on the efficient frontier.

Another way to solve model \eqref{eq:bicriteria_beta} is to use a weighted objective function approach. That is we solve the bi-criteria problem by solving the family of models
\begin{equation}
\min_{\beta \geq 0}\quad c(\beta)+\delta\mathbb{P}\left\{W(\beta,\lambda)> 0\right\},\\
\label{eq:CostModel}
\end{equation}
parameterized by $\delta>0$, the weight on the second term in the objective function. Solving model \eqref{eq:CostModel}, by varying $\delta$, we obtain all the extreme points of the convex hull of the efficient frontier (cf.\ \cite{Ralphs_06}).

\begin{figure}
\begin{center}
\includegraphics[scale=0.4]{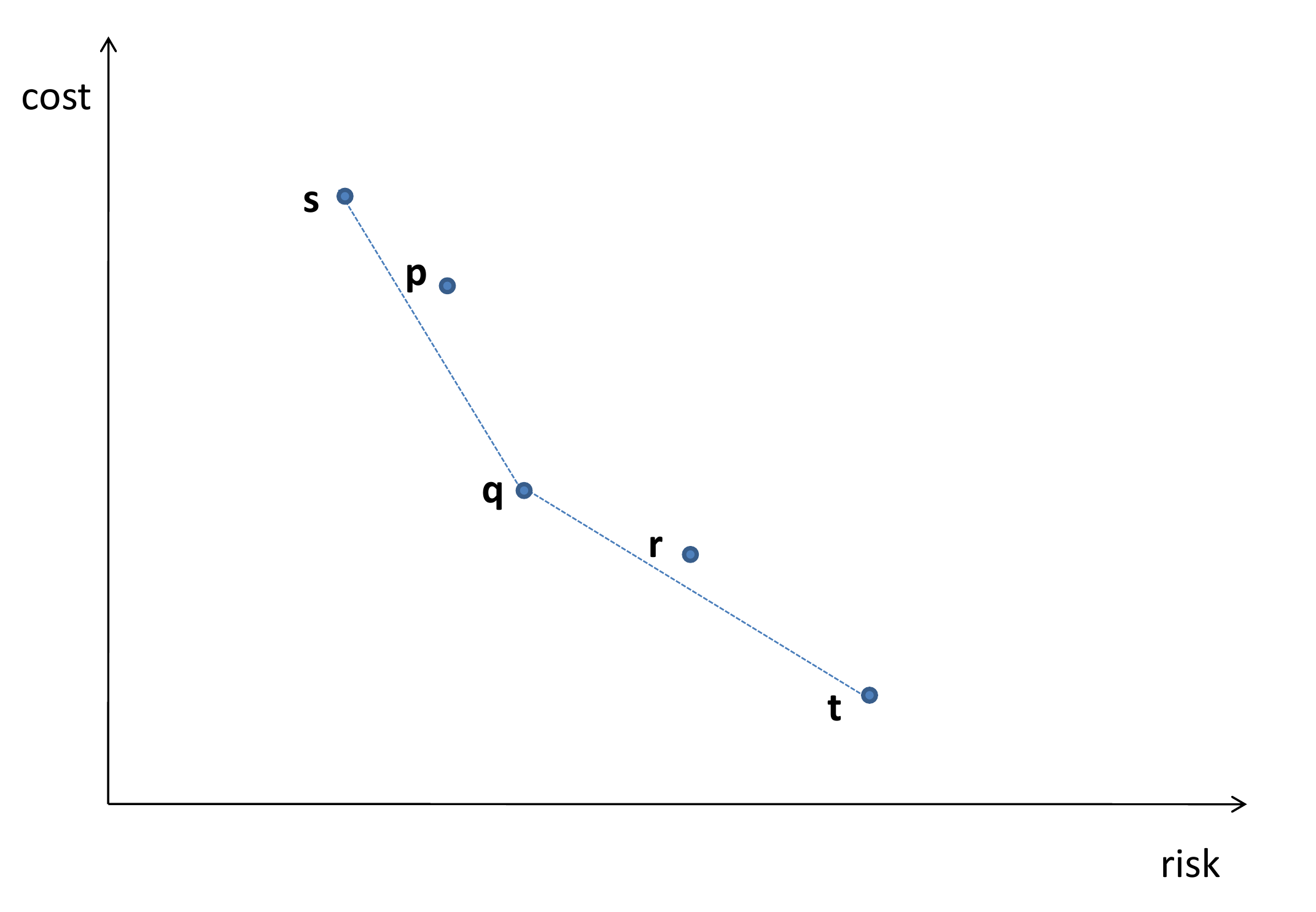}
\caption{An Efficient Frontier}
\label{FrontierPic}
\end{center}
\end{figure}
We use the example given in Figure \ref{FrontierPic} to explain the relationship between model \eqref{eq:ConstraintModel} and model \eqref{eq:CostModel}. In Figure \ref{FrontierPic}, we assume the points $p$, $q$, $r$, $s$ and $t$ correspond to Pareto efficient solutions to the bi-criteria problem under consideration. If we use model \eqref{eq:ConstraintModel} to solve the bi-criteria problem, then by varying $\epsilon$, we achieve all solutions on the efficient frontier. So, we achieve all five points, $p$, $q$, $r$, $s$ and $t$. On the other hand, if we use model \eqref{eq:CostModel}, unless the efficient frontier is convex, we do not achieve all five of these points. However, the solutions which are extreme points of the efficient frontier are achieved. In other words, points $s$, $q$ and $t$ are achieved by solving model \eqref{eq:CostModel} and varying $\delta$.

Model \eqref{eq:ConstraintModel} directly describes what is typically viewed as the practical need. Generally, service center managers try to find a staffing level that minimizes the staffing cost while maintaining a certain service level. However, in our view there is insight to be gained by forming the efficient frontier to better understand cost-quality tradeoffs. This is particularly true when contractual service levels have not yet been determined. In what follows we use either model \eqref{eq:ConstraintModel} or \eqref{eq:CostModel} to present results, depending on which is more convenient.

We now extend our model to a multi-station system as depicted in Figure \ref{MultiQueueSystemDetPic}.  
Suppose we have $L$ $M/M/n$ queues in parallel. 
Station $i$, $i=1,\ldots, L$, has arrival rate $\lambda_{i}$ and $n_{i}$ servers, determined by $\beta_{i}$ via the square-root staffing rule. Then we formulate:
\begin{equation}
\min_{\beta \geq 0}\quad \sum_{i=1}^{L}c_{i}(\beta_{i})+\delta\mathbb{P}\left\{ \bigcup_{i=1}^{L} \left\{W_{i}(\beta_{i},\lambda_{i})> 0 \right\}\right\},\\ \qquad
\label{objmodel1}
\end{equation}
where, $\beta=(\beta_{1},\ldots, \beta_{L})$ and $W_{i}(\beta_{i},\lambda_{i})$
is the stationary delay at station $i$. The second term in the sum above 
again incorporates costs related to quality of service. We assume that $c_i(\beta_i)$ is
continuous and strictly increasing in $\beta_i$ for $i = 1, \ldots, L$ and that the $L$ stations operate independently, i.e., the arrival and service processes are
mutually independent. Then the above model is equivalent to:
\begin{equation}
\min_{\beta \geq 0}\quad \sum_{i=1}^{L}c_{i}(\beta_{i})+\delta\left(1-\prod_{i=1}^{L}\left(1-\mathbb{P}\left\{ W_{i}\left(\beta_{i},\lambda_{i}\right)> 0 \right\}\right)\right).\\ \qquad
\label{eq:MultiObjModel}
\end{equation}

\section{Deterministic Arrival-rate Problems}
\label{chapter-deterministic-arrival-rates}
In this section, we formulate approximate versions of the optimization models presented in
the previous section, using the JVLZ bounds. 
We prove asymptotic optimality of the approximate solutions for single station, and then
multi-station models. These results are primarily stepping stones for the asymptotic
optimality results in Section \ref{chapter-stochastic-arrival-rates}, where we analyze
the case of random arrival rates. 

%
%
\subsection{Single-station System}
We begin with the single-station system and consider model \eqref{eq:ConstraintModel}:
\begin{equation*}
\min_{\beta \geq 0}\quad c(\beta)\\ \qquad
\mbox{s.t. } \quad \mathbb{P}\left\{W(\beta,\lambda)> 0\right\}\leq \epsilon,
\end{equation*}
Recall that the probability in the constraint is given by
$\mathbb{P}\left\{ W(\beta,\lambda)> 0\right\}=\tilde{\alpha}(\beta,\lambda)$.
Next, for any fixed $\lambda$, define model $F_{\lambda}$ as:
\begin{equation}
\min_{\beta \geq 0}\quad c(\beta)\\ \qquad
\mbox{s.t. } \tilde{\alpha}(\beta,\lambda)\leq \epsilon.
\label{eq:1D_original_2}
\end{equation}

As $\lambda$ varies, we obtain a sequence of models $\{F_{\lambda}\}$. The Erlang-C formula can be numerically unwieldy, especially when the arrival rate grows large. So we build an approximate model by replacing the Erlang-C formula with the JVLZ upper bound, $UB({\beta,\lambda}),$ defined by the equation on the right-hand side of \eqref{eq:UB}, except that $n$ is replaced by $\lambda+\beta\sqrt{\lambda}$. Using $UB({\beta,\lambda})$ we define our approximate model $G_{\lambda}$ as:
\begin{equation}
\min_{\beta \geq 0}\quad c(\beta)\\ \qquad
\mbox{s.t. } UB(\beta,\lambda)\leq \epsilon.
\label{eq:1D_modify_3}
\end{equation}
Notice that any feasible solution of model $G_{\lambda}$ is also feasible for model $F_{\lambda}$.

Theorem \ref{Det_Single_optimalityResult} establishes the asymptotic optimality of using
solutions of $G_{\lambda}$ to solve model $F_{\lambda}$. The theorem is stated in
terms of convergence of the decision variables. Since the cost function is assumed to
be continuous, this also implies convergence of the objective values. 
Before turning to Theorem \ref{Det_Single_optimalityResult}, we first provide a supporting lemma.
\begin{lemma}
\label{ErlangCDecreaseInBeta}
Let $\lambda>0$, $\bar{\alpha}({n,\lambda})$ be as defined in \eqref{ContErlangC}, and set $\tilde{\alpha}({\beta,\lambda})=\bar{\alpha}({\lambda+\beta\sqrt{\lambda},\lambda})$. Then $\tilde{\alpha}({\beta,\lambda})$ is strictly decreasing in $\beta$ for any $\lambda, \beta >0$ that satisfy $\lambda+\beta\sqrt{\lambda}\geq1.$
\end{lemma}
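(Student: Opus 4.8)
The plan is to reduce the claim to the monotonicity of the continuous Erlang-C formula $\bar{\alpha}(n,\lambda)$ in its first argument, and then to read that monotonicity directly off the integral representation \eqref{ContErlangC}. Fix $\lambda>0$ and write $n=n(\beta)=\lambda+\beta\sqrt{\lambda}$. Since $\sqrt{\lambda}>0$, the map $\beta\mapsto n(\beta)$ is strictly increasing; hence $\tilde{\alpha}(\beta,\lambda)=\bar{\alpha}(n(\beta),\lambda)$ is strictly decreasing in $\beta$ precisely when $\bar{\alpha}(n,\lambda)$ is strictly decreasing in $n$ over $n\ge 1$, the hypothesis $\lambda+\beta\sqrt{\lambda}\ge 1$ being exactly $n\ge 1$. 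So it suffices to prove the latter.

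From \eqref{ContErlangC} we have $\bar{\alpha}(n,\lambda)=1/I(n)$, where $I(n):=\lambda\int_{0}^{\infty} t e^{-\lambda t}(1+t)^{n-1}\,dt$. The integrand is nonnegative, and the factor $e^{-\lambda t}$ decays exponentially while $(1+t)^{n-1}$ grows only polynomially, so $I(n)$ is finite and strictly positive for every $n$. It therefore suffices to show that $I(n)$ is strictly increasing in $n$, since then $\bar{\alpha}=1/I$ is strictly decreasing. I would do this by a pointwise comparison of integrands: for $t>0$ we have $1+t>1$, so $s\mapsto(1+t)^{s}$ is strictly increasing, whence for $n_1<n_2$ one has $(1+t)^{n_1-1}<(1+t)^{n_2-1}$ for every $t>0$. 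Multiplying by the strictly positive weight $\lambda t e^{-\lambda t}$ and integrating preserves the strict inequality (it holds on the whole half-line), giving $I(n_1)<I(n_2)$. An equivalent route is to differentiate under the integral sign, $I'(n)=\lambda\int_{0}^{\infty} t e^{-\lambda t}(1+t)^{n-1}\ln(1+t)\,dt>0$, with the interchange justified by the same exponential domination; I would present whichever reads more cleanly.

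Combining the two steps, $I$ strictly increasing in $n$ makes $\bar{\alpha}(\cdot,\lambda)$ strictly decreasing in $n$, and composing with the strictly increasing map $\beta\mapsto n(\beta)$ yields that $\tilde{\alpha}(\beta,\lambda)$ is strictly decreasing in $\beta$, as claimed. There is no real obstacle here; the only points needing care are the finiteness of $I(n)$ and, for the derivative variant, the legitimacy of differentiating under the integral, both of which follow from the exponential factor dominating the polynomial growth of $(1+t)^{n-1}$ and of $(1+t)^{n-1}\ln(1+t)$. I would also remark that the hypothesis $n\ge 1$ is used only to keep the exponent $n-1$ nonnegative and to stay within the natural domain on which $\bar{\alpha}$ serves as the intended continuous extension of Erlang-C; the comparison argument for strict monotonicity itself does not actually require it.
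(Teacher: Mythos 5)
Your proof is correct, but it takes a genuinely different route from the paper's. The paper does not touch the integral representation at all: it cites Jagers and Van Doorn for the fact that $\bar{\alpha}(n,\lambda)$ is \emph{convex} in $n$, combines this with the known strict decrease of the Erlang-C formula on the positive integers, and concludes that a convex function strictly decreasing along the integers must be strictly decreasing for all real $n>1$ (otherwise convexity of the epigraph fails). Your argument instead reads the monotonicity directly off \eqref{ContErlangC}: since $(1+t)^{n-1}$ is strictly increasing in $n$ for each $t>0$, the weighted integral $I(n)=\lambda\int_0^\infty t e^{-\lambda t}(1+t)^{n-1}\,dt$ is strictly increasing, hence $\bar{\alpha}=1/I$ is strictly decreasing; the finiteness and the differentiation-under-the-integral variant are both justified by the exponential factor dominating the subexponential growth of $(1+t)^{n-1}$ and $(1+t)^{n-1}\ln(1+t)$. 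What each buys: the paper's proof is two lines long given the external convexity result, but it leans on the literature and inherently needs the integer anchor points, so it only delivers strict decrease for $n>1$ (note the slight mismatch with the lemma's stated hypothesis $n\geq 1$, which the paper's argument does not quite cover at the endpoint). Your proof is self-contained, elementary, and strictly stronger: as you correctly observe, the comparison works for every real $n\geq 0$ in the domain of $\bar{\alpha}$, so the hypothesis $\lambda+\beta\sqrt{\lambda}\geq 1$ is not needed at all under your argument, and it also repairs the endpoint issue just mentioned.
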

\begin{proof}
To prove $\tilde{\alpha}({\beta,\lambda})$ is strictly decreasing in $\beta$, it suffices to show that $\bar{\alpha}({n,\lambda})$ is strictly decreasing in $n$. Jagers and Van Doorn \cite{jagers_vandoorn_86} prove that $\bar{\alpha}({n,\lambda})$ is convex in $n$. Also, we know that $\bar{\alpha}({n,\lambda})$ is strictly decreasing in $n$ on the positive integers. This implies $\bar{\alpha}({n,\lambda})$ is strictly decreasing in $n$ for all $n>1$, otherwise its epigraph is not convex.
\end{proof}

\begin{theorem}
\label{Det_Single_optimalityResult}
For $\lambda>0$, let the optimal solution of $F_\lambda$, as defined in \eqref{eq:1D_original_2}, be $\beta_{\lambda}^{F}$ and let the optimal solution of $G_\lambda$, as defined in \eqref{eq:1D_modify_3}, be $\beta_{\lambda}^{G}$. Then $\beta_{\lambda}^{G}\geq\beta_{\lambda}^{F}$, $\forall \lambda>0$, and there exists a finite $\beta^{*}$ such that $$\lim_{\lambda\rightarrow \infty}\beta_{\lambda}^{G}=\lim_{\lambda\rightarrow \infty}\beta_{\lambda}^{F}=\beta^{*}.$$
\end{theorem}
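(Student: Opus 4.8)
The plan is to exploit that both constraint functions are strictly decreasing in $\beta$, so that each optimization reduces to locating the unique $\beta$ at which its constraint binds, and then to track these binding points as $\lambda\to\infty$ using the pointwise limit supplied by the Halfin--Whitt formula.

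First I would dispose of the inequality $\beta_\lambda^G\ge\beta_\lambda^F$, which needs no asymptotics. By Lemma \ref{UBDecreaseInBeta}, $UB(\beta,\lambda)$ is continuous and strictly decreasing in $\beta$, with $UB(0,\lambda)=1$ and $UB(\beta,\lambda)\to 0$ as $\beta\to\infty$, so the feasible set $\{\beta\ge 0: UB(\beta,\lambda)\le\epsilon\}$ of $G_\lambda$ is a half-line $[\beta_\lambda^G,\infty)$; the analogous statement for $F_\lambda$ follows from Lemma \ref{ErlangCDecreaseInBeta}. Because $UB(\beta,\lambda)\ge\tilde\alpha(\beta,\lambda)$ (Theorem \ref{BoundsForErlangC}), every $\beta$ feasible for $G_\lambda$ is feasible for $F_\lambda$, so the feasible set of $G_\lambda$ is contained in that of $F_\lambda$. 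Since $c$ is strictly increasing, each optimum is the left endpoint of the corresponding feasible half-line, and the inclusion forces $\beta_\lambda^F\le\beta_\lambda^G$.

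Next I would identify the common limit. Define $g(\beta)=[1+\sqrt{2\pi}\,\beta\,\Phi(\beta)e^{\beta^2/2}]^{-1}$, the Halfin--Whitt limit of Theorem \ref{HWtheorem}, which is continuous and strictly decreasing from $g(0)=1$ to $0$; let $\beta^{*}$ be the unique root of $g(\beta^{*})=\epsilon$, which is finite since $0<\epsilon<1$. I would then verify the pointwise limit $\lim_{\lambda\to\infty}UB(\beta,\lambda)=g(\beta)$ for each fixed $\beta>0$: with $n=\lambda+\beta\sqrt\lambda$ one has $\rho\to 1$, $\gamma=\beta\sqrt\rho\to\beta$, and $2/(3\sqrt n)\to 0$, while a second-order expansion of $1-\rho+\ln\rho$ gives $a^{2}=-2n(1-\rho+\ln\rho)\to\beta^{2}$, hence $a\to\beta$ and $\Phi(a)/\phi(a)\to\Phi(\beta)/\phi(\beta)$; substituting $1/\phi(\beta)=\sqrt{2\pi}e^{\beta^2/2}$ recovers $g(\beta)$. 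Theorem \ref{UniConvOfBounds} then transfers this to $\tilde\alpha$, so that $\tilde\alpha(\beta,\lambda)\to g(\beta)$ pointwise as well.

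Finally I would run a squeeze on the binding points. Fix $\eta>0$ with $\beta^{*}-\eta>0$. Since $g$ is strictly decreasing, $g(\beta^{*}-\eta)>\epsilon>g(\beta^{*}+\eta)$; by the pointwise convergence just established, $\tilde\alpha(\beta^{*}-\eta,\lambda)>\epsilon$ and $\tilde\alpha(\beta^{*}+\eta,\lambda)<\epsilon$ for all large $\lambda$. For such $\lambda$ the binding relation $\tilde\alpha(\beta_\lambda^F,\lambda)=\epsilon$ together with strict monotonicity in $\beta$ (Lemma \ref{ErlangCDecreaseInBeta}) forces $\beta^{*}-\eta<\beta_\lambda^F<\beta^{*}+\eta$; the identical argument with $UB$ (Lemma \ref{UBDecreaseInBeta}) yields $\beta^{*}-\eta<\beta_\lambda^G<\beta^{*}+\eta$. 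As $\eta$ is arbitrary, both sequences converge to $\beta^{*}$. The constraints do bind for large $\lambda$, since $\tilde\alpha(0,\lambda)\to g(0)=1>\epsilon$ and $UB(0,\lambda)=1>\epsilon$, which legitimizes the binding characterization of the optima. The main obstacle is the asymptotic evaluation $a\to\beta$: it rests on the cancellation of the first-order terms in $1-\rho+\ln\rho$, so that the surviving second-order term, multiplied by $n$, produces exactly $\beta^{2}$; everything else is monotonicity bookkeeping.
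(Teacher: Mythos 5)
Your proof is correct, but it takes a genuinely different route from the paper's. The paper never identifies $\beta^{*}$ explicitly: it first shows, via Lemma \ref{UBDecrease} (monotonicity of $UB$ in $\lambda$), that $\beta_{\lambda}^{G}$ is strictly decreasing in $\lambda$ and hence converges to some finite $\beta^{*}$, and then handles $\beta_{\lambda}^{F}$ with a subsequence argument: every subsequential limit $\hat{\beta}$ must satisfy $\lim_{\lambda'}UB(\hat{\beta},\lambda')=\epsilon$, and uniqueness of such a $\beta$ forces $\hat{\beta}=\beta^{*}$. You instead name the limit in closed form, $\beta^{*}=g^{-1}(\epsilon)$ with $g$ the Halfin--Whitt function \eqref{HW_2}, prove the pointwise limit $UB(\beta,\lambda)\rightarrow g(\beta)$ by direct expansion of $a$ (your computation $a^{2}=-2n(1-\rho+\ln\rho)=n(1-\rho)^{2}\left(1+O(1-\rho)\right)\rightarrow\beta^{2}$ is right; the paper only cites Janssen et al.\ for this limit, in the Appendix proof of Lemma \ref{Det_Single_optimalityResult_extension}), and then squeeze both binding points between $\beta^{*}\pm\eta$ using strict monotonicity in $\beta$ at just two test points. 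Your route is shorter and sidesteps two pieces of machinery: Lemma \ref{UBDecrease} is not needed at all, and the subsequence step disappears --- along with a slightly delicate moment in the paper's proof, where $\tilde{\alpha}(\beta_{\lambda'}^{F},\lambda')=\epsilon$ and $\beta_{\lambda'}^{F}\rightarrow\hat{\beta}$ are combined to conclude $\tilde{\alpha}(\hat{\beta},\lambda')\rightarrow\epsilon$, a step that implicitly leans on uniform convergence or equicontinuity, which your two-point squeeze avoids entirely. What the paper's argument buys in exchange is the structural byproduct that the approximate staffing factor $\beta_{\lambda}^{G}$ is monotone in $\lambda$, plus a template that works without knowing the limit function; your argument buys the explicit identification of $\beta^{*}$ as the Halfin--Whitt staffing factor and adapts essentially verbatim to the $\lambda$-dependent right-hand side of Lemma \ref{Det_Single_optimalityResult_extension}, since only $\epsilon_{\lambda}\rightarrow\epsilon$ enters the squeeze. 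One bookkeeping point to make explicit: your binding characterization presumes the feasible sets are nonempty half-lines, which requires $\lim_{\beta\rightarrow\infty}UB(\beta,\lambda)=\lim_{\beta\rightarrow\infty}\tilde{\alpha}(\beta,\lambda)=0$ for each fixed $\lambda$; you use this when declaring the feasible set of $G_{\lambda}$ to be $[\beta_{\lambda}^{G},\infty)$, and the paper asserts the same fact without proof, so you are on equal footing there.
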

\begin{proof}
The objective function, $c(\beta)$, is strictly increasing in $\beta$ and by Lemmas \ref{UBDecreaseInBeta} and \ref{ErlangCDecreaseInBeta}, $\tilde{\alpha}(\beta,\lambda)$ and $UB(\beta,\lambda)$ are strictly decreasing and continuous in $\beta$. Hence, the unique optimal solution of models \eqref{eq:1D_original_2} and \eqref{eq:1D_modify_3} are defined by requiring the respective constraints to hold with equality. That is, $\beta_{\lambda}^{F}$ solves $\tilde{\alpha}(\beta,\lambda)= \epsilon$ and $\beta_{\lambda}^{G}$ solves $UB(\beta,\lambda)= \epsilon$. We know that $UB(\beta, \lambda)$ and $\tilde{\alpha}(\beta,\lambda)$ have range (0,1] and we have $$UB(0, \lambda)=\tilde{\alpha}(0,\lambda)=1$$ and $$\lim_{\beta\rightarrow\infty}UB(\beta,\lambda)=\lim_{\beta\rightarrow\infty}\tilde{\alpha}(\beta,\lambda)=0.$$ Also $UB(\beta, \lambda)$ and $\tilde{\alpha}(\beta,\lambda)$ are continuous and strictly decreasing in $\beta$ on $[0, \infty)$. So, the optimal solutions $\beta_{\lambda}^{F}$ and $\beta_{\lambda}^{G}$ exist and are unique for any $\epsilon>0$ and $\lambda>0$. From Lemma \ref{UBDecrease}, we have $\beta_{\lambda_1}^{G}>\beta_{\lambda_2}^{G}\geq 0$, for any $\lambda_2>\lambda_1$.
This indicates $\lim_{\lambda\rightarrow\infty}\beta_{\lambda}^{G}$ exists and is finite. Let $\lim_{\lambda\rightarrow\infty}\beta_{\lambda}^{G}=\beta^{*}$. Since $UB(\beta,\lambda)\geq\tilde{\alpha}(\beta,\lambda)$ for any $\beta>0$ and $\lambda>0$, we have $\beta_{\lambda}^{G}\geq\beta_{\lambda}^{F}$
for any $\lambda>0$. This together with the fact that $\{\beta_{\lambda}^{G}\}$ is a bounded sequence, indicates that $\{\beta_{\lambda}^{F}\}$ is a bounded sequence. So $\{\beta_{\lambda}^{F}\}$ has at least one subsequence that has a finite limit. For any subsequence $\{\beta_{\lambda'}^{F}\}$ with a limit and its corresponding limit $\hat{\beta}$, we have $$\lim_{\lambda'\rightarrow\infty}\tilde{\alpha}(\hat{\beta},\lambda')=\epsilon.$$ Also, for any $\beta$, we have $$\lim_{\lambda\rightarrow\infty}(\tilde{\alpha}(\beta,\lambda)-UB(\beta,\lambda))=0.$$ This indicates that $$\lim_{\lambda'\rightarrow\infty}UB(\hat{\beta},\lambda')=\lim_{\lambda'\rightarrow\infty}\tilde{\alpha}(\hat{\beta},\lambda')=\epsilon.$$ Since $\lim_{\lambda'\rightarrow\infty}UB(\beta^{*},\lambda')=\epsilon,$ and there exists a unique $\beta$ satisfying $\lim_{\lambda'\rightarrow\infty}UB(\beta,\lambda')=\epsilon,$ we have that $\hat{\beta}=\beta^{*}.$ This implies that all subsequences of $\{\beta_{\lambda}^{F}\}$ have the same limit point, $\beta^{*}$. Thus 
$\lim_{\lambda\rightarrow \infty} \beta_{\lambda}^{F}$ exists and is $\beta^{*}$.
In other words, $$\lim_{\lambda\rightarrow \infty}\beta_{\lambda}^{G}=\lim_{\lambda\rightarrow \infty}\beta_{\lambda}^{F}=\beta^{*}.$$
\end{proof}

\subsection{Multi-station System}
\label{sec:mss}
We now extend our development to a multi-station system and return our attention to model (\ref{eq:MultiObjModel}):
\begin{equation*}
\min_{\beta \geq 0}\quad \sum_{i=1}^{L}c_{i}(\beta_{i})+\delta\left(1-\prod_{i=1}^{L}\left(1-\mathbb{P}\left\{ W_{i}\left(\beta_{i},\lambda_{i}\right)> 0 \right\}\right)\right).\\ \qquad
\end{equation*}
%
As in the single-station system, we formulate an equivalent model using the continuous Erlang-C formula, and we again denote this by model $F_{\lambda}$:
\begin{equation}
\min_{\beta \geq 0}\quad \sum_{i=1}^{L}c_{i}(\beta_{i})+\delta\left(1-\prod_{i=1}^{L}\left(1-\tilde{\alpha}\left(\beta_{i},\lambda_{i}\right)\right)\right).\\ \qquad
\label{objmodel_Erlang}
\end{equation}

Following an analogous development to our single-station system, we build an approximate model for \eqref{objmodel_Erlang} by using the JVLZ bound $UB(\beta,\lambda)$. The approximate model $G_{\lambda}$ is:
\begin{equation}
\min_{\beta \geq 0}\quad \sum_{i=1}^{L}c_{i}(\beta_{i})+\delta\left(1-\prod_{i=1}^{L}\left(1-UB\left( \beta_{i},\lambda_{i}\right)\right)\right).\\ \qquad
\label{objmodel_UB}
\end{equation}

Denote the objective function of model $F_{\lambda}$ as $f_{\lambda}(\cdot)$, and the optimal solution of $F_{\lambda}$ as the $L$-vector $\beta_{\lambda}^{F}$. Similarly, denote the objective function of model $G_{\lambda}$ as $g_{\lambda}(\cdot)$, and the optimal solution of $G_{\lambda}$ as $\beta_{\lambda}^{G}$. Theorem \ref{multi_objvalue_converge} implies the asymptotic optimality of solutions to the approximate model as the arrival rate vector grows large. We let the arrival rates grow in the following way. Assume there are initial values of arrival rates for all queues. Let the initial vector of rates be $\lambda^{0}=(\lambda^{0}_{1},\ldots,\lambda^{0}_{L})$. Indexing the sequence of systems under consideration with positive integers, assume the arrival rate for the $m^{th}$ system is $\lambda^{m}=m\lambda^{0}$. Then as $m\rightarrow\infty$ the
components of $\lambda^{m}$ grow large together. 

The lemma below, needed for the main result, is proved in the Appendix. 
\begin{lemma}
\label{Uniform_Converge_Multi_Term}
Let $f_{m}(\cdot)$ denote the objective function of model $F_{\lambda}$ as defined in \eqref{objmodel_Erlang}, with arrival rate $\lambda^m=m\lambda^0$. And, let $g_{m}(\cdot)$ denote the objective function of model $G_{\lambda}$ as defined in \eqref{objmodel_UB}, with arrival rate $\lambda^m$. Then, $$\lim_{m\rightarrow\infty}\sup_{\beta\geq0}\left(g_{m}(\beta)-f_{m}(\beta)\right)=0.$$
\label{UniConvofProduct}
\end{lemma}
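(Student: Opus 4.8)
The plan is to first cancel the common staffing-cost terms. Since both $f_m$ and $g_m$ carry the identical sum $\sum_{i=1}^{L}c_i(\beta_i)$, the difference collapses to
$$g_m(\beta)-f_m(\beta)=\delta\left[\prod_{i=1}^{L}\left(1-\tilde{\alpha}(\beta_i,\lambda_i^m)\right)-\prod_{i=1}^{L}\left(1-UB(\beta_i,\lambda_i^m)\right)\right],$$
with $\lambda_i^m=m\lambda_i^0$. Because the JVLZ bound satisfies $UB(\beta_i,\lambda_i^m)\geq\tilde{\alpha}(\beta_i,\lambda_i^m)$, each factor obeys $1-UB\leq 1-\tilde{\alpha}$, and since all factors lie in $[0,1]$ the bracketed quantity is nonnegative. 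Hence the supremum I must control is a supremum of a nonnegative function, and it suffices to produce an upper bound that vanishes as $m\to\infty$.

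The key reduction is the elementary product inequality: for $a_i,b_i\in[0,1]$ one has $|\prod_i a_i-\prod_i b_i|\leq\sum_i|a_i-b_i|$, which follows from a one-factor-at-a-time telescoping expansion in which every partial product is at most $1$. Applying it with $a_i=1-\tilde{\alpha}(\beta_i,\lambda_i^m)$ and $b_i=1-UB(\beta_i,\lambda_i^m)$ gives
$$0\leq g_m(\beta)-f_m(\beta)\leq\delta\sum_{i=1}^{L}\left(UB(\beta_i,\lambda_i^m)-\tilde{\alpha}(\beta_i,\lambda_i^m)\right).$$
This linearizes the product and converts the multi-station statement into a finite sum of single-station gaps.

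Next I take the supremum over $\beta\geq0$. Since each summand on the right depends only on its own coordinate $\beta_i$, the supremum of the sum equals the sum of the coordinatewise suprema, so
$$\sup_{\beta\geq0}\left(g_m(\beta)-f_m(\beta)\right)\leq\delta\sum_{i=1}^{L}\sup_{\beta_i\geq0}\left(UB(\beta_i,\lambda_i^m)-\tilde{\alpha}(\beta_i,\lambda_i^m)\right).$$
Including the endpoint $\beta_i=0$ is harmless because $UB=\tilde{\alpha}=1$ there, so the coordinatewise supremum over $\beta_i\geq0$ coincides with that over $\beta_i>0$. As $m\to\infty$ each $\lambda_i^m=m\lambda_i^0\to\infty$, so Theorem \ref{UniConvOfBounds} drives each of the $L$ supremum terms to $0$; because $L$ is finite, the entire right-hand side tends to $0$, and nonnegativity then forces $\lim_{m\to\infty}\sup_{\beta\geq0}(g_m(\beta)-f_m(\beta))=0$.

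The main obstacle is genuinely just the middle step: recognizing that the difference of the two products must be dominated by a sum of single-factor differences through the telescoping inequality. That is precisely the device that allows the already-established \emph{uniform} single-station convergence of Theorem \ref{UniConvOfBounds} to be applied termwise; without uniformity the coordinatewise suprema could not be sent to zero. Everything else — cancellation of the cost terms, the nonnegativity observation, splitting the supremum across independent coordinates, and appealing to finiteness of $L$ — is routine and requires no further estimates beyond what the uniform-convergence theorem supplies.
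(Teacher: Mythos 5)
Your proof is correct and takes essentially the same route as the paper's: the paper's induction on the number of stations uses precisely the one-factor-at-a-time decomposition $\underline{UB}_1\,\underline{UB}_2-\underline{\tilde{\alpha}}_1\,\underline{\tilde{\alpha}}_2=\underline{UB}_1\left(\underline{UB}_2-\underline{\tilde{\alpha}}_2\right)+\left(\underline{UB}_1-\underline{\tilde{\alpha}}_1\right)\underline{\tilde{\alpha}}_2$ with partial products bounded by $1$, which is exactly your telescoping inequality $\left|\prod_i a_i-\prod_i b_i\right|\leq\sum_i|a_i-b_i|$ unrolled, followed by the same appeal to Theorem \ref{UniConvOfBounds} on each single-station gap. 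Your closed-form statement of the telescoping bound, the explicit splitting of the supremum across coordinates, and the handling of the endpoint $\beta_i=0$ (where $UB=\tilde{\alpha}=1$, an issue the paper's proof glosses over since Theorem \ref{UniConvOfBounds} is stated for $\beta>0$) are minor streamlinings of the same argument.
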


\begin{theorem}
\label{multi_objvalue_converge}
Let $f_{m}(\cdot)$ denote the objective function and let $\beta_{m}^{F}$ denote the optimal solution of model $F_{\lambda}$ as defined in \eqref{objmodel_Erlang}, with arrival rate $\lambda^m$. Let $g_{m}(\cdot)$ denote the objective function, and let $\beta_{m}^{G}$ denote the optimal solution of model $G_\lambda$ as defined in \eqref{objmodel_UB}, with arrival rate $\lambda^m$. Then $$\lim_{m\rightarrow\infty}\left(f_{m}(\beta_{m}^{G})-f_{m}(\beta_{m}^{F})\right)=0.$$
\end{theorem}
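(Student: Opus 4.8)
The plan is to run the standard ``sandwich'' argument that turns uniform closeness of the two objective functions into convergence of their optimal costs, using the one-sided Lemma~\ref{Uniform_Converge_Multi_Term} already in hand. The only structural facts I need are (a) a pointwise ordering $g_m \ge f_m$ and (b) the uniform bound $\sup_{\beta\ge0}(g_m-f_m)\to 0$; existence of the minimizers $\beta_{m}^{F},\beta_{m}^{G}$ is assumed in the statement and is all the argument uses.

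First I would record the pointwise ordering of the objectives. Since the JVLZ upper bound dominates the continuous Erlang-C value, $UB(\beta_i,\lambda_i)\ge\tilde{\alpha}(\beta_i,\lambda_i)$ for every $\beta_i$, so each factor satisfies $0\le 1-UB(\beta_i,\lambda_i)\le 1-\tilde{\alpha}(\beta_i,\lambda_i)$ (both factors lie in $[0,1]$ because $UB,\tilde{\alpha}$ have range $(0,1]$). Hence $\prod_{i=1}^{L}(1-UB)\le\prod_{i=1}^{L}(1-\tilde{\alpha})$, and since the cost terms $\sum_i c_i(\beta_i)$ are identical in the two models,
$$g_m(\beta)-f_m(\beta)=\delta\Bigl[\textstyle\prod_{i=1}^{L}\bigl(1-\tilde{\alpha}(\beta_i,\lambda_i^m)\bigr)-\prod_{i=1}^{L}\bigl(1-UB(\beta_i,\lambda_i^m)\bigr)\Bigr]\ge 0$$
for every $\beta\ge 0$ and every $m$; that is, $g_m\ge f_m$ pointwise.

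Next, set $\epsilon_m:=\sup_{\beta\ge0}\bigl(g_m(\beta)-f_m(\beta)\bigr)$, which is nonnegative by the previous step and satisfies $\epsilon_m\to 0$ by Lemma~\ref{Uniform_Converge_Multi_Term}. Then I would chain three elementary inequalities: $f_m(\beta_{m}^{G})\le g_m(\beta_{m}^{G})$ from the pointwise dominance; $g_m(\beta_{m}^{G})\le g_m(\beta_{m}^{F})$ since $\beta_{m}^{G}$ minimizes $g_m$; and $g_m(\beta_{m}^{F})\le f_m(\beta_{m}^{F})+\epsilon_m$ from the definition of $\epsilon_m$. Concatenating these gives $f_m(\beta_{m}^{G})-f_m(\beta_{m}^{F})\le\epsilon_m$. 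Conversely, $\beta_{m}^{F}$ minimizes $f_m$, so $f_m(\beta_{m}^{G})-f_m(\beta_{m}^{F})\ge 0$. The difference is thus squeezed into $[0,\epsilon_m]$, and letting $m\to\infty$ yields the claim.

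As for the main obstacle: the theorem itself presents none — it is the textbook fact that uniform closeness of objectives together with the one-sided domination $g_m\ge f_m$ forces the optimal values to converge. All the real work lives in Lemma~\ref{Uniform_Converge_Multi_Term}, namely upgrading the single-station uniform convergence of Theorem~\ref{UniConvOfBounds} to uniform convergence of the $L$-fold product objective. If I were proving that lemma directly I would telescope the product, writing $\prod(1-\tilde{\alpha})-\prod(1-UB)$ as a sum of $L$ terms in which a single factor $\bigl(UB(\beta_k,\lambda_k^m)-\tilde{\alpha}(\beta_k,\lambda_k^m)\bigr)$ is isolated and the remaining factors are bounded by $1$; since $L$ is fixed and each isolated difference is uniformly small by Theorem~\ref{UniConvOfBounds}, the whole sum vanishes uniformly in $\beta$. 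Given that lemma, the sandwich above completes the proof.
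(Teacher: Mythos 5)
Your proposal is correct and follows essentially the same route as the paper: the identical sandwich chain $f_m(\beta_m^G)\le g_m(\beta_m^G)\le g_m(\beta_m^F)\le f_m(\beta_m^F)+\sup_{\beta\ge0}(g_m-f_m)$, combined with the nonnegativity from optimality of $\beta_m^F$ and the uniform convergence of Lemma~\ref{Uniform_Converge_Multi_Term}. Even your sketch of the lemma via telescoping the $L$-fold product, bounding the spectator factors by $1$, is just the unrolled form of the paper's induction argument in the Appendix.
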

\begin{proof}
First, since $\beta_m^F$ is optimal with respect to $F_{\lambda}$ we have $f_{m}(\beta_{m}^{G})-f_{m}(\beta_{m}^{F})\geq0$. The objective function $g_m$ is an upper bound for $f_m$ and so we have $g_{m}(\beta_{m}^{G})\geq f_{m}(\beta_{m}^{G})$. Thus $f_{m}(\beta_{m}^{G})-f_{m}(\beta_{m}^{F})\leq g_{m}(\beta_{m}^{G})-f_{m}(\beta_{m}^{F})$.
We also have $g_{m}(\beta_{m}^{F})\geq g_{m}(\beta_{m}^{G})$, since $\beta_{m}^{G}$ is optimal, with respect to $G_{\lambda}$ with arrival rate $\lambda^m$. This implies that
$f_{m}(\beta_{m}^{G})-f_{m}(\beta_{m}^{F})\leq g_{m}(\beta_{m}^{F})-f_{m}(\beta_{m}^{F})$. According to Lemma \ref{Uniform_Converge_Multi_Term}, we have $g_{m}(\beta_{m}^{F})-f_{m}(\beta_{m}^{F})\rightarrow 0$, as $m\rightarrow\infty$. This proves that
$f_{m}(\beta_{m}^{G})-f_{m}(\beta_{m}^{F})\rightarrow 0$, as $m\rightarrow\infty$.
\end{proof}

Theorem \ref{multi_objvalue_converge} indicates that for the multi-station problem we describe above, our approximate solution is asymptotically optimal for any $\delta>0$, in the sense that the absolute gap between the optimal objective value and the approximate objective value goes to 0. Notice that Theorem \ref{Det_Single_optimalityResult},
for the single-station case, is slightly stronger, in that it implies that both the approximate solutions and the
approximate objective values converge to the respective optima. 

We could also build the approximating problem using the lower JVLZ bound, $LB(\beta,\lambda)$, to replace the Erlang-C formula in the original model $F_{\lambda}$. In this case, we can again obtain a result analogous to Theorem \ref{multi_objvalue_converge}. We prefer to employ the upper bound rather than the lower bound because the solution under the former approximation is appropriately conservative, i.e., it is guaranteed to be feasible for model $F_{\lambda}$, while the solution under the lower bound is not. Finally, one
could also build approximating problems using the Halfin-Whitt formula. However, 
the required uniform convergence result seems harder to establish. 

\section{Stochastic Arrival-rate Problems}
\label{chapter-stochastic-arrival-rates}
In this section, we extend models from Section~\ref{chapter-deterministic-arrival-rates} to include arrival-rate uncertainty. That is, we focus on solving large-scale staffing problems when the arrival rates are uncertain in addition to the inherent randomness of the system's inter-arrival times and service times. In particular, we consider a decision making scheme in which the manager must select staffing levels before observing the arrival rates. 
However, the decision maker does have complete distributional knowledge of the
arrival rates. Such a model reflects typical practical situations in which the staffing
schedule must be determined in advance, with only a forecast of possible daily
call volumes in hand. The manager's goal is now to minimize staffing costs while
meeting QoS levels averaged over time. We represent this by taking an expected value
over the possible arrival rates. 

In the setting where the state space of possible arrival rates is discrete, 
we show that as the system size grows, there is at most one {\em key} scenario under which the probability of waiting converges to a non-trivial value, i.e., a value strictly between 0 and 1. In any other scenario, the probability of waiting converges to either 0 or 1, that is the staffing level is either over- or under-loaded in any scenario other than the key scenario. Exploiting this result, we propose a two-step solution procedure for the staffing problem with random arrival rates. In the first step, we use the desired QoS level to identify the key scenario corresponding to the optimal staffing level. After finding the key scenario, the random arrival-rate model reduces to a deterministic arrival-rate model. In the second step, we solve the resulting model, with a deterministic arrival rate, by using our approximation model proposed in Section~\ref{chapter-deterministic-arrival-rates}. The approximate optimal staffing level obtained in this procedure converges to the true optimal staffing level for the random arrival-rate problem as the system's size grows large.

\subsection{Single-station System}
\label{sec:RandomModel}
As before, we first analyze the single-station system and then turn to the
case of $L$ parallel $M/M/n$ queues.
So, in the single-station system, let $\Lambda$ denote the random arrival rate. Let $\Lambda^{\omega}$ be a specific realization, where $\omega$ is an outcome, or \emph{scenario}, from the sample space $\Omega$. We assume that $\Omega$ is finite. Let $p^{\omega}$ be the probability assigned to scenario $\omega$.
A naive attempt to extend \eqref{eq:ConstraintModel} to the doubly stochastic setting
results in the following model:
\begin{equation}
\min_{\beta \geq 0}\quad c(\beta)\\ \qquad
\mbox{s.t. } \sum_{\omega\in\Omega} p^{\omega} \mathbb{P}\left\{ W(\beta,\Lambda)> 0 \mid\Lambda=\Lambda^{\omega}\right\}\leq \epsilon,
\label{eq:Random_Single_Chance2_0}
\end{equation}
where $\mathbb{P}\left\{ W(\beta,\Lambda)> 0 \mid\Lambda=\Lambda^{\omega}\right\}=\tilde{\alpha}(\beta,\Lambda^{\omega})$.

However, there is a fundamental shortcoming in this formulation. In our desired stochastic program the staffing decision made at time 0 should be nonanticipative; i.e., it cannot depend on a realization of the randomness not yet observed. However, in this formulation, the number of servers does depend on $\omega$ in that the number of servers varies by $\omega$ via $\Lambda^\omega + \beta \sqrt{\Lambda^\omega}$. In order to rectify this,
the decision at time 0 must consist of both choosing the square-root staffing factor
$\beta$ \emph{and} a specific scenario $\omega^{key}$. Therefore, the number
of servers chosen is given by $\Lambda^{\omega^{key}} + \beta \sqrt{\Lambda^{\omega^{key}}}$ which is not dependent on the outcome $\omega$.
It turns out that such a scheme still enables us to produce asymptotically optimal
solutions to the staffing problem. 

So, we revise the extension of model \eqref{eq:ConstraintModel} as follows and denote the model $F_{\Lambda}$:
\begin{equation}
\min_{\beta \ge 0, \omega^{key} \in \Omega} c(\beta,\omega^{key}) \\ \qquad
\mbox{s.t. } \sum_{\omega \in \Omega} p^{\omega} \mathbb{P}\left\{ W(\beta,\Lambda)> 0 \mid\Lambda=\Lambda^{\omega}\right\}\leq \epsilon.
\label{eq:Random_Single_Chance2_0_revised}
\end{equation}

To facilitate asymptotic analysis, we need to properly define how $\Lambda$ grows large.
Let the initial value of the arrival rate in all scenarios be $\Lambda_{0}=(\Lambda_{0}^{\omega_{1}},\ldots,\Lambda_{0}^{\omega_{\mid\Omega\mid}})$, where, without loss of generality, we assume the components of $\Lambda_{0}$ satisfy $\Lambda_{0}^{\omega_{1}} < \Lambda_{0}^{\omega_{2}} < \cdots< \Lambda_{0}^{\omega_{\mid\Omega\mid}}$. Next, assume that the arrival rate in the
 $m^{th}$, $m \in \mathbb{N}$,  system is $\Lambda_{m}=m\Lambda_{0}$. 
 We then let $m\rightarrow\infty$. Define now \eqref{eq:Random_Single_Chance2_0_revised} with $\Lambda_m$ as $F_{\Lambda_m}$:
\begin{equation}
\small
\min_{\beta \geq 0, \omega^{key} \in \Omega}\quad c(\beta,\omega^{key})\\ \qquad
\mbox{s.t. } \sum_{\omega\in\Omega} p^{\omega} \mathbb{P}\left\{ W(\beta,\Lambda_m)> 0 \mid\Lambda_m=\Lambda_m^{\omega}\right\}\leq \epsilon.
\label{eq:Random_Single_Chance2}
\end{equation}
The objective function $c(\beta,\omega)$ is assumed to have the following property: $c(\cdot,\omega)$ is strictly increasing and continuous for all $\omega \in \Omega$.

As implied by Theorem \ref{HWtheorem}, when $\lambda$ is deterministic the probability of waiting has a non-degenerate limit if and only if the number of servers, $n$, increases in such a way that $n=\lambda+\beta\sqrt{\lambda}$ for some $\beta>0$. In model \eqref{eq:Random_Single_Chance2}, the number of servers $n$, or equivalently $(\beta,\omega^{key})$, is chosen before we see the realization of the arrival rate. For a given staffing level $(\beta,\omega^{key})$, scenario $\omega^{key}$ is the only scenario for which the limiting probability of waiting is strictly between 0 and 1. In other scenarios, the system is either over- or under-loaded for the chosen staffing level as the arrival rate grows large. We thus obtain the following corollary of Theorem \ref{HWtheorem}.

\begin{corollary}
\label{HWcorollary}
For a given staffing level specified by $\beta > 0$ and $\omega^{key}$, we have
\[
\lim_{m \rightarrow \infty} \bar{\alpha}(\Lambda_m^{\omega^{key}} + \beta \sqrt{\Lambda_m^{\omega^{key}}},\Lambda_m^{\omega^{key}}) \in (0,1).
\]
For all $\omega \in \Omega$ such that $\omega \neq \omega^{key}$ and $\Lambda_0^\omega > \Lambda_0^{\omega^{key}}$, we have
\[
\lim_{m \rightarrow \infty} \bar{\alpha}(\Lambda_m^{\omega^{key}} + \beta \sqrt{\Lambda_m^{\omega^{key}}},\Lambda_m^{\omega})=1;
\]
for all $\omega \in \Omega$ such that $\omega \neq \omega^{key}$ and $\Lambda_0^\omega < \Lambda_0^{\omega^{key}}$, we have
\[
\lim_{m \rightarrow \infty} \bar{\alpha}(\Lambda_m^{\omega^{key}} + \beta \sqrt{\Lambda_m^{\omega^{key}}},\Lambda_m^{\omega})=0.
\]
%
\end{corollary}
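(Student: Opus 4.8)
The plan is to reduce all three limits to the behaviour of the traffic intensity $\rho_m = \Lambda_m^{\omega}/(\Lambda_m^{\omega^{key}} + \beta\sqrt{\Lambda_m^{\omega^{key}}})$ and then to invoke Theorem \ref{HWtheorem} where it applies and supplement it where it does not. Since $\Lambda_m = m\Lambda_0$, the square-root correction is of lower order and $\rho_m \to \Lambda_0^{\omega}/\Lambda_0^{\omega^{key}}$ as $m\to\infty$. The three clauses of the corollary correspond exactly to this limit being equal to, strictly below, or strictly above $1$, i.e., to $\omega=\omega^{key}$, to $\Lambda_0^\omega<\Lambda_0^{\omega^{key}}$, and to $\Lambda_0^\omega>\Lambda_0^{\omega^{key}}$, respectively.

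For the key scenario I would set $\lambda = \Lambda_m^{\omega^{key}}$ and $n = \lambda + \beta\sqrt{\lambda}$, so that $n-\lambda = \beta\sqrt{\lambda}$ and $\sqrt{n}(1-\rho_m) = \beta\sqrt{\lambda}/\sqrt{n}$. Because $n/\lambda \to 1$ as $\lambda = m\Lambda_0^{\omega^{key}} \to \infty$, this converges to $\beta>0$, so condition \eqref{HW_1} holds and Theorem \ref{HWtheorem} yields the limit $1/(1 + \sqrt{2\pi}\,\beta\,\Phi(\beta)e^{\beta^2/2}) \in (0,1)$. The only wrinkle is that Theorem \ref{HWtheorem} is phrased for the discrete Erlang-C $\alpha$, whereas the corollary uses the continuous extension $\bar{\alpha}$; I would close this gap by sandwiching $\bar{\alpha}$ between the JVLZ bounds of Theorem \ref{BoundsForErlangC}, whose common limit under square-root staffing is the Halfin-Whitt value, which is precisely the content of Theorem \ref{UniConvOfBounds}.

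For the underloaded scenario, $\Lambda_0^\omega < \Lambda_0^{\omega^{key}}$ forces $\rho_m \to \Lambda_0^\omega/\Lambda_0^{\omega^{key}} < 1$, so $n>\lambda$ for all large $m$ and the JVLZ upper bound \eqref{eq:UB} applies. I would then show its denominator diverges: with $\gamma$ as in \eqref{eq:gamma}, the gap $n-\lambda = m(\Lambda_0^{\omega^{key}}-\Lambda_0^\omega) + \beta\sqrt{m\Lambda_0^{\omega^{key}}}$ grows linearly in $m$ while $\sqrt{n}$ grows like $\sqrt{m}$, so $\gamma\to\infty$; simultaneously $a = \sqrt{-2n(1-\rho+\ln\rho)}\to\infty$, since $1-\rho+\ln\rho$ is negative and bounded away from $0$ (as $1-\rho+\ln\rho<0$ for $\rho\ne1$ and $\rho_m\to c<1$) while $n\to\infty$, whence $\Phi(a)/\phi(a)\to\infty$. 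The right-hand side of \eqref{eq:UB} therefore tends to $0$, and since $\bar{\alpha}\ge0$ this gives $\bar{\alpha}\to0$.

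The overloaded scenario is the main obstacle: here neither Theorem \ref{HWtheorem} nor the JVLZ bounds apply, because $\rho_m \to \Lambda_0^\omega/\Lambda_0^{\omega^{key}} > 1$ and hence $n<\lambda$ eventually. The plan is to work directly with the Jagers--Van Doorn integral in \eqref{ContErlangC} and apply Laplace's method to $g(n,\lambda) = \lambda\int_0^\infty t\,e^{-\lambda t}(1+t)^{n-1}\,dt = \lambda\int_0^\infty t\,e^{h(t)}\,dt$, where $h(t) = -\lambda t + (n-1)\ln(1+t)$. Since $n<\lambda$, the stationary point of $h$ is negative and $h$ is concave, strictly decreasing on $[0,\infty)$ with $h(0)=0$, so $h(t)\le(n-1-\lambda)t$ and $g \le \lambda/(\lambda-n+1)^2 \to 0$. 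Consequently the continuous Erlang-C value $\bar{\alpha}=1/g$ exceeds one for all large $m$; interpreting $\bar{\alpha}$ as the probability that a customer waits---which is necessarily at most one and, in an overloaded system, approaches one because every arriving customer eventually queues---gives the claimed limit of $1$. The care required is precisely in reconciling the continuous extension, which is unbounded above once $\rho>1$, with the probabilistic quantity the corollary refers to; flagging this point and verifying the integral estimate is where the real work lies.
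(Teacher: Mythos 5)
Your proposal is correct, and it is substantially more than the paper itself provides: the paper states this result without proof, presenting it as an immediate consequence of Theorem \ref{HWtheorem}. Comparing the two routes is instructive. For the key scenario you argue exactly as the paper intends (verify $\sqrt{n}(1-\rho_n)\to\beta$ and invoke Halfin--Whitt), but you add the discrete-to-continuous bridge --- sandwiching $\bar{\alpha}$ between the JVLZ bounds, which both converge to the Halfin--Whitt value --- a step the paper silently skips even though $\Lambda_m^{\omega^{key}}+\beta\sqrt{\Lambda_m^{\omega^{key}}}$ is not an integer. For the other two clauses, note that the ``if and only if'' in Theorem \ref{HWtheorem} by itself only rules out limits in $(0,1)$; to pin down $0$ versus $1$ the paper implicitly relies on monotonicity comparisons, whereas you give self-contained quantitative estimates: in the underloaded case, $\gamma\to\infty$ and $a\to\infty$ force the JVLZ upper bound \eqref{eq:UB} to vanish, and since $0\le\bar{\alpha}\le UB$ (valid because $n>\lambda$ eventually) the limit is $0$; in the overloaded case, where neither Theorem \ref{HWtheorem} nor the JVLZ bounds apply, your concavity bound $h(t)\le(n-1-\lambda)t$ gives $1/\bar{\alpha}\le\lambda/(\lambda-n+1)^2\to 0$. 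You also correctly identify a genuine subtlety the paper glosses over: the raw Jagers--Van Doorn extension \eqref{ContErlangC} exceeds $1$ once $\lambda>n$ (e.g., $\bar{\alpha}(1,2)=2$), so the paper's assertion that $\bar{\alpha}:\reals^2_+\rightarrow[0,1]$ is accurate only in the stable region, and the third clause holds literally only under the convention that the waiting probability is capped at one (equivalently, reading $\bar{\alpha}$ as $\min\{\bar{\alpha},1\}$ in overload, consistent with an unstable queue having waiting probability $1$). Under that convention your estimate shows the raw value exceeds one for all large $m$, which makes the claimed limit rigorous; this flagged reconciliation is the one place where your argument supplies an idea the paper needed but never states.
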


Consider the constraint of model \eqref{eq:Random_Single_Chance2}, for a specific decision $\beta > 0$ and $\omega_i=\omega^{key}$. Then, we have
\begin{equation}
\label{xxx}
\lim_{m\rightarrow\infty}\bar{\alpha}(\Lambda_m^{\omega_{i}} + \beta \sqrt{\Lambda_m^{\omega_{i}}},\Lambda_m^{\omega_i})\in(0,1).
\end{equation}
Suppose we approximate the QoS constraint in \eqref{eq:Random_Single_Chance2} by
 \begin{equation}
\label{yyy}
 \quad \sum_{k=i+1}^{|\Omega|} p^{\omega_k} +p^{\omega_{i}}\mathbb{P}\left\{W(\beta,\Lambda_m)> 0 \mid\Lambda_m=\Lambda_m^{\omega_{i}}\right\}\leq \epsilon.
 \end{equation}
This approximation replaces $\mathbb{P}\left\{ W(\beta,\Lambda_m)> 0 \mid\Lambda_m=\Lambda_m^{\omega}\right\}$ by unity for $\omega = \omega_{i+1},\ldots, \omega_{|\Omega|}$, and by zero for $\omega=\omega_1,\ldots,\omega_{i-1}$. In view of Corollary \ref{HWcorollary} and equation \eqref{xxx}, this approximation becomes increasingly precise as $m$ grows large.

Equation \eqref{yyy} and the structure of $c(\beta,\omega_{i})$ suggest that for sufficiently large $m$ we should select the key scenario by finding the scenario $\omega_{i}$ such that $\sum_{k=i}^{|\Omega|}p^{\omega_{k}}\geq\epsilon$ and $\sum_{k=i+1}^{|\Omega|}p^{\omega_{k}}<\epsilon$, for $i\in\{1,2,\ldots ,|\Omega|-1\}$; if $p^{\omega_{|\Omega|}}\geq \epsilon$, then we select $\omega_{|\Omega|}$ as the key scenario. In our work, we do not consider the trivial situations where $\epsilon=0$ or $\epsilon=1$. Thus, this mechanism for selecting the key scenario yields a unique $\omega_i$. Given the key scenario $\omega_{i}$, we form a first approximation to model \eqref{eq:Random_Single_Chance2} as:
\begin{equation}
\min_{\beta \geq 0}\quad c(\beta,\omega_{i})\\ \qquad
\mbox{s.t. } p^{\omega_{i}} \mathbb{P}\left\{ W(\beta,\Lambda_m^{\omega_{i}})> 0 \right\}\leq \left(\epsilon-\sum_{k=i+1}^{|\Omega|}p^{\omega_{k}}\right).
\label{eq:Random_Single_Chance3}
\end{equation}

The term $\mathbb{P}\left\{ W\left(\beta,\Lambda_m^{\omega_{i}}\right)> 0 \right\}$ in model \eqref{eq:Random_Single_Chance3} is calculated by the Erlang-C formula, $\bar{\alpha}(\Lambda_m^{\omega_{i}} + \beta \sqrt{\Lambda_m^{\omega_{i}}},\Lambda_m^{\omega_i})$. We can use the upper bound $UB(\beta,\Lambda_m^{\omega_i})$ to approximate $\mathbb{P}\left\{ W(\beta,\Lambda_m^{\omega_{i}})> 0 \right\}$ and build our approximating model with $\omega_{i}$ which we denote $G_{\Lambda_m}$:
\begin{equation}
\min_{\beta \geq 0 }\quad c(\beta, \omega_{i})\\ \qquad
\mbox{s.t. } p^{\omega_{i}} UB\left(\beta,\Lambda_m^{\omega_{i}}\right)\leq \left(\epsilon-\sum_{k=i+1}^{|\Omega|}p^{\omega_{k}}\right).
\label{eq:Random_Single_Chance4}
\end{equation}
%

Our next goal is to extend Theorem \ref{Det_Single_optimalityResult} to the doubly 
stochastic case considered in this section. First we need Lemma \ref{Det_Single_optimalityResult_extension}, which is
proved in the Appendix. That lemma relies on a classic result from real analysis, stated below for completeness. 

\begin{theorem}
\emph{(Buchanan and Hildebrandt \cite{Buchanan_1908})}
\label{buchanan}
If a sequence $f_{n}(x)$ of monotonic functions converges to a continuous function $f(x)$ in $[a,b]$ then this convergence is uniform.
\end{theorem}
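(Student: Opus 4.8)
The plan is to prove this via a Dini-type argument, with the crucial twist that the monotonicity available here is in the spatial variable $x$ rather than in the index $n$. First I would reduce to a single direction of monotonicity: it suffices to treat the case in which every $f_n$ is nondecreasing, since the nonincreasing case follows by applying the result to $-f_n \to -f$, and an arbitrary sequence can be partitioned into its nondecreasing and nonincreasing sub-sequences (uniform convergence of the full sequence follows once each class is handled, by taking the larger of the two thresholds). So assume each $f_n$ is nondecreasing on $[a,b]$. Then the pointwise limit $f$ is itself nondecreasing, and by hypothesis it is continuous, hence uniformly continuous on the compact interval $[a,b]$.

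Next I would exploit continuity of $f$ to build a finite grid that controls the oscillation of the \emph{limit}. Fix $\epsilon > 0$. Using uniform continuity of $f$, choose a partition $a = x_0 < x_1 < \cdots < x_K = b$ fine enough that $f(x_j) - f(x_{j-1}) < \epsilon$ for every $j$. Because the grid is finite, pointwise convergence $f_n \to f$ upgrades to simultaneous control at the grid points: there is an $N$ such that $|f_n(x_j) - f(x_j)| < \epsilon$ for all $j \in \{0,\ldots,K\}$ whenever $n \ge N$.

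The heart of the argument is a sandwiching step. For arbitrary $x \in [a,b]$, locate the cell $[x_{j-1}, x_j]$ containing $x$. Monotonicity of $f_n$ gives $f_n(x_{j-1}) \le f_n(x) \le f_n(x_j)$, and monotonicity of $f$ gives $f(x_{j-1}) \le f(x) \le f(x_j)$. Chaining the upper estimate $f_n(x) \le f_n(x_j) < f(x_j) + \epsilon < f(x_{j-1}) + 2\epsilon \le f(x) + 2\epsilon$ with the symmetric lower estimate $f_n(x) \ge f_n(x_{j-1}) > f(x_{j-1}) - \epsilon > f(x_j) - 2\epsilon \ge f(x) - 2\epsilon$, I obtain $|f_n(x) - f(x)| < 2\epsilon$ for every $x \in [a,b]$ and every $n \ge N$. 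Since $\epsilon$ was arbitrary and $N$ does not depend on $x$, this is exactly uniform convergence.

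I expect the only real obstacle to be the bookkeeping that makes the grid do double duty: the partition must be chosen by reference to the limit $f$ (so that consecutive $f$-values differ by less than $\epsilon$), whereas the approximation of $f_n$ by $f$ is invoked only at the finitely many grid points. The reason the proof closes despite using pointwise convergence at just finitely many points is precisely the monotonicity in $x$, which propagates control from the grid to the entire interval; the point requiring care is verifying that the two inequality chains telescope to a bound of $2\epsilon$ that is \emph{independent of the number of cells}, rather than degrading as the grid is refined.
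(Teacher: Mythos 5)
Your proof is correct. Note that the paper does not actually prove this statement---it is quoted as a classical fact from Buchanan and Hildebrandt \cite{Buchanan_1908} with no internal proof---so the only benchmark is the standard argument, and yours is exactly it: reduce to the nondecreasing case (your subsequence-splitting step is sound, since thresholds from the two subclasses combine by taking a maximum), pick a grid on which the continuous limit oscillates by less than $\epsilon$, upgrade pointwise convergence to simultaneous control at the finitely many grid points, and sandwich via monotonicity in $x$; the resulting $2\epsilon$ bound indeed depends only on the two endpoints of the cell containing $x$, not on the mesh size, so the argument closes uniformly.
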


In the lemma below, we extend to the models in Section \ref{chapter-deterministic-arrival-rates}
to include constraints whose right-hand sides are also functions of $\lambda$. 
\begin{lemma}
\label{Det_Single_optimalityResult_extension}
Let $\lambda>0.$ We extend model \eqref{eq:1D_original_2} in Section~\ref{chapter-deterministic-arrival-rates} to
\begin{equation}
\min_{\beta \geq 0}\quad c(\beta)\\ \qquad
\mbox{s.t. } \tilde{\alpha}(\beta,\lambda)\leq \epsilon_{\lambda},
\end{equation}
and denote its optimal solution by $\beta_{\lambda}^{F}$. We also extend model \eqref{eq:1D_modify_3} in Section~\ref{chapter-deterministic-arrival-rates} to
\begin{equation}
\min_{\beta \geq 0}\quad c(\beta)\\ \qquad
\mbox{s.t. } UB(\beta,\lambda)\leq \epsilon_{\lambda},
\end{equation}
and denote its optimal solution by $\beta_{\lambda}^{G}$. Here the right-hand side $\epsilon_{\lambda}$ satisfies $\lim_{\lambda\rightarrow\infty}\epsilon_{\lambda}=\epsilon>0$. Then $\beta_{\lambda}^{G}\geq\beta_{\lambda}^{F}$, $\forall \lambda>0$, and there exists a finite $\beta^{*}$ such that $$\lim_{\lambda\rightarrow \infty}\beta_{\lambda}^{G}=\lim_{\lambda\rightarrow \infty}\beta_{\lambda}^{F}=\beta^{*}.$$
\end{lemma}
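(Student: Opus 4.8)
The plan is to follow the skeleton of the proof of Theorem~\ref{Det_Single_optimalityResult}, but to replace the monotonicity-in-$\lambda$ argument that established the limit there, since a $\lambda$-dependent right-hand side $\epsilon_\lambda$ destroys that monotonicity. First I would record the routine preliminaries exactly as in Theorem~\ref{Det_Single_optimalityResult}: because $c$ is strictly increasing and, by Lemmas~\ref{UBDecreaseInBeta} and~\ref{ErlangCDecreaseInBeta}, both $UB(\cdot,\lambda)$ and $\tilde\alpha(\cdot,\lambda)$ are continuous and strictly decreasing in $\beta$ from the value $1$ at $\beta=0$ down to $0$, the two constraints must bind at optimality whenever $\epsilon_\lambda\in(0,1)$ (which holds for all large $\lambda$, since $\epsilon_\lambda\to\epsilon\in(0,1)$). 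Hence $\beta_\lambda^F$ is the unique root of $\tilde\alpha(\beta,\lambda)=\epsilon_\lambda$ and $\beta_\lambda^G$ the unique root of $UB(\beta,\lambda)=\epsilon_\lambda$. Since $UB\ge\tilde\alpha$ pointwise, evaluating at $\beta_\lambda^F$ gives $UB(\beta_\lambda^F,\lambda)\ge\tilde\alpha(\beta_\lambda^F,\lambda)=\epsilon_\lambda=UB(\beta_\lambda^G,\lambda)$, and monotonicity of $UB$ in $\beta$ then yields $\beta_\lambda^G\ge\beta_\lambda^F$ for every $\lambda$.

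The engine of the convergence proof is a uniform limit. Under square-root staffing $n=\lambda+\beta\sqrt\lambda$ one checks $\sqrt n\,(1-\rho_n)\to\beta$, so Theorem~\ref{HWtheorem} gives the pointwise limit $\tilde\alpha(\beta,\lambda)\to\alpha_\infty(\beta):=\left(1+\sqrt{2\pi}\,\beta\Phi(\beta)e^{\beta^2/2}\right)^{-1}$ for each fixed $\beta$. Each $\tilde\alpha(\cdot,\lambda)$ is monotone in $\beta$ by Lemma~\ref{ErlangCDecreaseInBeta} and the limit $\alpha_\infty$ is continuous, so the Buchanan--Hildebrandt result, Theorem~\ref{buchanan} (applied along any sequence $\lambda_k\to\infty$), upgrades this to \emph{uniform} convergence on every compact $\beta$-interval. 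Combining with the uniform gap estimate $UB-\tilde\alpha\to0$ from Theorem~\ref{UniConvOfBounds}, I obtain $UB(\cdot,\lambda)\to\alpha_\infty(\cdot)$ uniformly on compacta as well. Since $\alpha_\infty$ is a continuous, strictly decreasing bijection from $[0,\infty)$ onto $(0,1]$, I define $\beta^*:=\alpha_\infty^{-1}(\epsilon)$, the intended common limit.

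With these tools the convergence is a boundedness-plus-subsequence argument. For boundedness, if $\beta_\lambda^G\to\infty$ along a subsequence then, for any fixed $B$, monotonicity gives $\epsilon_\lambda=UB(\beta_\lambda^G,\lambda)\le UB(B,\lambda)\to\alpha_\infty(B)$, so $\epsilon\le\alpha_\infty(B)$; letting $B\to\infty$ forces $\epsilon\le0$, a contradiction, so $\{\beta_\lambda^G\}$ is bounded. Now take any convergent subsequence $\beta_{\lambda_k}^G\to\hat\beta$. I would write $\big|UB(\beta_{\lambda_k}^G,\lambda_k)-\alpha_\infty(\hat\beta)\big|\le\big|UB(\beta_{\lambda_k}^G,\lambda_k)-\alpha_\infty(\beta_{\lambda_k}^G)\big|+\big|\alpha_\infty(\beta_{\lambda_k}^G)-\alpha_\infty(\hat\beta)\big|$; the first term vanishes by uniform convergence of $UB$ to $\alpha_\infty$ on a compact set containing the tail of the sequence, and the second by continuity of $\alpha_\infty$. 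Hence $\epsilon=\lim_k\epsilon_{\lambda_k}=\lim_k UB(\beta_{\lambda_k}^G,\lambda_k)=\alpha_\infty(\hat\beta)$, and injectivity of $\alpha_\infty$ gives $\hat\beta=\beta^*$. As every convergent subsequence has the same limit $\beta^*$, the bounded sequence $\beta_\lambda^G$ converges to $\beta^*$. The identical argument with $\tilde\alpha$ in place of $UB$ (its own uniform convergence together with $\tilde\alpha(\beta_\lambda^F,\lambda)=\epsilon_\lambda$), combined with $0\le\beta_\lambda^F\le\beta_\lambda^G$, gives $\beta_\lambda^F\to\beta^*$ as well.

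I expect the only real obstacle to be the step where the argument of $UB$ (resp.\ $\tilde\alpha$) is itself moving with $\lambda$: there plain pointwise convergence is insufficient, and it is precisely the uniform convergence supplied by Theorem~\ref{buchanan} (via monotonicity in $\beta$ and continuity of $\alpha_\infty$) that lets me pass to the limit through the moving root while simultaneously absorbing the drift of $\epsilon_\lambda$ toward $\epsilon$. A slicker alternative that sidesteps the subsequence bookkeeping is to sandwich: for small $\delta>0$ and large $\lambda$ one has $\epsilon-\delta<\epsilon_\lambda<\epsilon+\delta$, so monotonicity of $UB$ in $\beta$ traps $\beta_\lambda^G$ between the two constant-right-hand-side solutions already handled by Theorem~\ref{Det_Single_optimalityResult}; letting $\lambda\to\infty$ and then $\delta\to0$, and invoking continuity of $\alpha_\infty^{-1}$, again pins the limit at $\beta^*$.
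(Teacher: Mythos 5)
Your proposal is correct, but it runs Buchanan--Hildebrandt in the opposite direction from the paper. The paper's proof works with \emph{inverse} functions in the risk variable: it notes $\lim_{\lambda\rightarrow\infty}UB(\beta,\lambda)=\alpha_{HW}(\beta)$ pointwise (citing \cite{janssen_08}), deduces pointwise convergence of the inverses $UB^{-1}_{\lambda}(x)\rightarrow\alpha_{HW}^{-1}(x)$, applies Theorem~\ref{buchanan} to the monotone family $UB^{-1}_{\lambda}(\cdot)$ with continuous limit $\alpha_{HW}^{-1}(\cdot)$, and then evaluates the uniformly convergent inverses at the moving threshold: $\beta_{\lambda}^{G}=UB^{-1}_{\lambda}(\epsilon_{\lambda})\rightarrow\alpha_{HW}^{-1}(\epsilon)=\beta^{*}$ in a single stroke, with $\beta_{\lambda}^{F}$ then handled ``analogously to Theorem~\ref{Det_Single_optimalityResult}.'' You instead apply Theorem~\ref{buchanan} to the forward functions $\tilde{\alpha}(\cdot,\lambda)$ (and $UB(\cdot,\lambda)$ via Theorem~\ref{UniConvOfBounds}) in the staffing variable $\beta$, obtain uniform convergence to $\alpha_{\infty}$ on compacta, and then run an explicit boundedness-plus-subsequence argument on the roots of the binding constraints, absorbing the drift $\epsilon_{\lambda}\rightarrow\epsilon$ through the moving-argument estimate. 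Each route has something to recommend it: the paper's inverse-function argument is shorter and dispatches the $\lambda$-dependent right-hand side immediately, but it quietly uses that pointwise convergence of strictly monotone functions to a continuous strictly monotone limit passes to the inverses (a small lemma it does not prove), and its $\sup_{x>0}$ sits slightly uneasily with the compact-interval statement of Theorem~\ref{buchanan}, patched only by the bracketing $0<l\leq\epsilon_{\lambda}\leq u$; your forward argument avoids inverses entirely, uses Buchanan--Hildebrandt exactly as stated, treats $\beta_{\lambda}^{F}$ and $\beta_{\lambda}^{G}$ symmetrically, and makes the identification $\beta^{*}=\alpha_{\infty}^{-1}(\epsilon)$ explicit, at the cost of subsequence bookkeeping (which your sandwich variant then eliminates). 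One small point of care: Theorem~\ref{HWtheorem} is stated for integer-$n$ sequences, so strictly speaking the pointwise limit $\tilde{\alpha}(\beta,\lambda)\rightarrow\alpha_{\infty}(\beta)$ for the continuous extension should be obtained, as you in effect also do, from the Janssen et al.\ limit $UB(\beta,\lambda)\rightarrow\alpha_{HW}(\beta)$ together with Theorem~\ref{UniConvOfBounds}, rather than from Theorem~\ref{HWtheorem} alone.
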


In the following result, we use Lemma \ref{Det_Single_optimalityResult_extension} to infer that the gap between the optimal solution for model \eqref{eq:Random_Single_Chance2} and the optimal solution for model \eqref{eq:Random_Single_Chance4} goes to 0 as the system size increases, i.e., the approximating solutions
are also asymptotically optimal in the doubly stochastic model. 

First, we introduce some notation. Let $(\omega_m^F,\beta_m^F)$ be an optimal solution to model $F_{\Lambda_m}$ as defined in \eqref{eq:Random_Single_Chance2}. For any $m$, there are
multiple such solution pairs, since the same staffing level can be achieved by different combinations
of $\omega_m$ and $\beta_m$. However, once $\omega_m$ is chosen, there exists but one
optimal $\beta_m$. For a fixed $m$, let $\mathbf{W}_m$ be the set of all optimal
$\omega_m^F$.

\begin{theorem}
\label{Random_Single_optimalityResult}
Let $\beta_m^G$ be an optimal solution to model $G_{\Lambda_m}$ as defined in \eqref{eq:Random_Single_Chance4}. Assume $\epsilon \in (0,1)$ is such that there exists an $i$ with $\sum_{k=i}^{|\Omega|} p^{\omega_k} > \epsilon$ and $\sum_{k=i+1}^{|\Omega|} p^{\omega_k} < \epsilon$. 
Then, there exists an $\bar{m}$ such that for all $m \ge \bar{m}$ we have $\omega_i \in \mathbf{W}_m$. And, there 
exists a $\beta^* > 0$ such that
\[
\lim_{m\rightarrow \infty}\beta_{m}^{G}=\lim_{m\rightarrow \infty}\beta_{m}^{F}=\beta^{*},
\]
where the $\beta_{m}^{F}$ are the optimal staffing factors for \eqref{eq:Random_Single_Chance2}
with $\omega_m^F= \omega_i$ for all $m$.
\end{theorem}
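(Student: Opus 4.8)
The plan is to reduce the problem to one about the number of servers alone. Once a pair $(\omega^{key},\beta)$ is chosen, the deployed staffing level is $n=\Lambda_m^{\omega^{key}}+\beta\sqrt{\Lambda_m^{\omega^{key}}}$, and the left-hand side of the constraint in \eqref{eq:Random_Single_Chance2} equals $\Psi_m(n):=\sum_{\omega\in\Omega}p^{\omega}\bar\alpha(n,\Lambda_m^{\omega})$, which depends on $(\omega^{key},\beta)$ only through $n$ and is strictly decreasing in $n$. Thus the model is equivalent to minimizing the staffing cost $\bar c(n)$ of $n$ servers, with $c(\beta,\omega^{key})=\bar c(\Lambda_m^{\omega^{key}}+\beta\sqrt{\Lambda_m^{\omega^{key}}})$, subject to $\Psi_m(n)\le\epsilon$, where choosing $\omega^{key}$ merely restricts $n$ to the range $n\ge\Lambda_m^{\omega^{key}}$ (since $\beta\ge0$). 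I would record this reduction first and set $\epsilon^{*}:=\big(\epsilon-\sum_{k=i+1}^{|\Omega|}p^{\omega_k}\big)/p^{\omega_i}$, observing that the hypotheses $\sum_{k\ge i}p^{\omega_k}>\epsilon$ and $\sum_{k>i}p^{\omega_k}<\epsilon$ give $\epsilon^{*}\in(0,1)$, which guarantees a unique finite root $\beta^{*}>0$ of the limiting Halfin--Whitt equation $\alpha_\infty(\beta^{*})=\epsilon^{*}$, where $\alpha_\infty(\beta)$ denotes the limit \eqref{HW_2}.

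I would then dispatch $G_{\Lambda_m}$. Its constraint is exactly $UB(\beta,\Lambda_m^{\omega_i})\le\epsilon^{*}$ with a fixed right-hand side and $\Lambda_m^{\omega_i}\to\infty$, so Theorem \ref{Det_Single_optimalityResult} applied with $\lambda=\Lambda_m^{\omega_i}$ yields $\beta_m^{G}\to\beta^{*}$; here $\beta^{*}$ is identified as the root of $\alpha_\infty(\beta^{*})=\epsilon^{*}$ because $UB(\beta,\lambda)\to\tilde\alpha(\beta,\lambda)\to\alpha_\infty(\beta)$ as $\lambda\to\infty$ by Theorems \ref{UniConvOfBounds} and \ref{HWtheorem}.

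The heart of the argument is the convergence of $\beta_m^{F}$, the optimal staffing factor of the true all-scenario model with key scenario fixed at $\omega_i$. With $n_m=\Lambda_m^{\omega_i}+\beta\sqrt{\Lambda_m^{\omega_i}}$, its active constraint is $p^{\omega_i}\tilde\alpha(\beta,\Lambda_m^{\omega_i})+R_m(\beta)\le\epsilon$, where $R_m(\beta)=\sum_{k\ne i}p^{\omega_k}\bar\alpha(n_m,\Lambda_m^{\omega_k})$. By Corollary \ref{HWcorollary} each summand of $R_m$ tends to $1$ for $k>i$ and to $0$ for $k<i$; since $\bar\alpha(n_m,\cdot)$ is monotone in $\beta$ and the limits are constant, this convergence is uniform on every interval $[0,B]$ (the extreme value is attained at an endpoint), so $R_m(\beta)\to\sum_{k>i}p^{\omega_k}$ uniformly there. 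Consequently, for any fixed small $\eta>0$ and all large $m$, the feasible set of the true constraint is sandwiched between those of the two constant-right-hand-side models $\tilde\alpha(\beta,\Lambda_m^{\omega_i})\le\epsilon^{*}-\eta$ and $\tilde\alpha(\beta,\Lambda_m^{\omega_i})\le\epsilon^{*}+\eta$; because each feasible set is an interval of the form $[\,\cdot\,,\infty)$ and the cost is increasing, the corresponding optimal factors obey $\beta_m^{-}\le\beta_m^{F}\le\beta_m^{+}$. Applying Theorem \ref{Det_Single_optimalityResult} (equivalently Lemma \ref{Det_Single_optimalityResult_extension}) to the two auxiliary models gives $\beta_m^{\pm}\to\beta^{*}(\epsilon^{*}\mp\eta)$, and letting $\eta\downarrow0$ and using continuity of the root in the right-hand side pins $\beta_m^{F}\to\beta^{*}=\lim_m\beta_m^{G}$. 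This sandwiching step is the main obstacle: the effective right-hand side $(\epsilon-R_m(\beta))/p^{\omega_i}$ is itself $\beta$-dependent, which blocks a direct appeal to Lemma \ref{Det_Single_optimalityResult_extension}, and the remedy is precisely the uniform-on-compacts upgrade of the pointwise Corollary \ref{HWcorollary}, together with the monotone feasible-set structure, which also delivers the boundedness of $\{\beta_m^{F}\}$ needed to run the argument.

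Finally, for $\omega_i\in\mathbf{W}_m$ I would return to the server-count reduction. The optimal staffing level is the smallest $n$ with $\Psi_m(n)\le\epsilon$; the analysis above shows this binding level is $n^{*}_m=\Lambda_m^{\omega_i}+\beta_m^{F}\sqrt{\Lambda_m^{\omega_i}}=m\Lambda_0^{\omega_i}+O(\sqrt m)$. A key scenario $\omega_j$ can realize $n^{*}_m$ if and only if $\Lambda_m^{\omega_j}\le n^{*}_m$; for large $m$ this holds exactly for $j\le i$, whereas $\Lambda_m^{\omega_j}=m\Lambda_0^{\omega_j}>n^{*}_m$ for $j>i$ forces strictly more servers and hence, the cost being increasing in $n$, a strictly larger objective. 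Therefore each of $\omega_1,\dots,\omega_i$ attains the optimal cost $\bar c(n^{*}_m)$, so $\omega_i\in\mathbf{W}_m$ for all $m\ge\bar m$; and among these optimal key scenarios $\omega_i$ is the one whose staffing factor stays bounded, yielding the convergent sequence $\beta_m^{F}\to\beta^{*}$ claimed.
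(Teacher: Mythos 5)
Your proposal is correct, and it reaches the result by a genuinely different route than the paper, although both share the same skeleton: identify $\omega_i$ as the asymptotic key scenario, then reduce \eqref{eq:Random_Single_Chance2} to a deterministic single-station problem. The differences are two. First, for $\lim_m \beta_m^F$: the paper fixes $\omega_i$, absorbs the non-key scenarios into a perturbed right-hand side $\epsilon-\sum_{k>i}p^{\omega_k}+\Delta_m$ with $\Delta_m\rightarrow 0$ (model \eqref{reduced_constraint_2}), and invokes Lemma \ref{Det_Single_optimalityResult_extension}, whose proof handles the moving right-hand side via Theorem \ref{buchanan}; you instead sandwich the true constraint between two constant-right-hand-side models at levels $\epsilon^{*}\mp\eta$, apply Theorem \ref{Det_Single_optimalityResult} twice, and let $\eta\downarrow 0$ using continuity of the root of the Halfin--Whitt limit \eqref{HW_2} in the threshold. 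Your version buys something real here: you explicitly upgrade the pointwise convergence of Corollary \ref{HWcorollary} to uniform convergence on compact $\beta$-intervals (via monotonicity of each $\bar{\alpha}$ term in $\beta$ and the endpoint argument, essentially an in-line Theorem \ref{buchanan}), and you supply the boundedness of $\{\beta_m^F\}$ needed because uniformity holds only on compacts. The paper's passage from bounds \eqref{reduced_constraint_UB}--\eqref{reduced_constraint_LB}, where $\bar{m}$ is chosen for a \emph{fixed} $\beta$, to the $\beta$-independent $\Delta_m$ in \eqref{reduced_constraint_2} silently uses exactly this uniformity, so your treatment is more careful on the step you correctly flag as the main obstacle. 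Second, for $\omega_i\in\mathbf{W}_m$: the paper derives an infeasibility contradiction from \eqref{reduced_constraint_LB} when $\omega_m^F<\omega_i$ and a feasibility argument from \eqref{reduced_constraint_UB} when $\omega_m^F>\omega_i$, whereas you pass to the server count $n$, note the constraint value $\Psi_m(n)$ is strictly decreasing in $n$, and observe that the minimal feasible level $n_m^{*}=\Lambda_m^{\omega_i}+\beta_m^F\sqrt{\Lambda_m^{\omega_i}}$ is realizable exactly by keys $\omega_j$ with $j\leq i$ for large $m$. Two small points to tighten: (i) your reduction assumes $c(\beta,\omega)=\bar{c}\bigl(\Lambda_m^{\omega}+\beta\sqrt{\Lambda_m^{\omega}}\bigr)$ for a single increasing $\bar{c}$, i.e., that cost depends on $(\beta,\omega^{key})$ only through the staffing level --- the paper literally assumes only that $c(\cdot,\omega)$ is increasing and continuous per scenario, though its remark before the theorem (that multiple $(\omega,\beta)$ pairs achieve the same optimal staffing level) shows it leans on the same implicit hypothesis, so this is a shared rather than a new assumption; (ii) you should record explicitly that no $n\leq\Lambda_m^{\omega_i}$ is feasible for large $m$, which follows since then $\Psi_m(n)\geq\sum_{k\geq i}p^{\omega_k}-o(1)>\epsilon$ --- this half-line is what licenses your assertion that the binding level lies in the key-$\omega_i$ range and rules out smaller keys producing a smaller optimal $n$.
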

\begin{proof}
 In what follows we use $\omega < \omega'$ to mean $\Lambda_0^{\omega} < \Lambda_0^{\omega'}$. Since $|\Omega|$ is finite, from Corollary \ref{HWcorollary} we have $$\lim_{m\rightarrow\infty}\max_{\omega, \omega'\in\Omega, \omega \neq \omega'}\min\left\{\bar{\alpha}(\Lambda_m^{\omega'}+\beta\sqrt{\Lambda_m^{\omega'}},\omega), 1-\bar{\alpha}(\Lambda_m^{\omega'}+\beta\sqrt{\Lambda_m^{\omega'}},\omega)\right\}=0,$$ 
for each fixed $\beta >0$. 
 
 Thus, given $\Delta>0$ and a fixed $\beta >0$, there exists an $\bar{m}$, such that for all $m\geq\bar{m}$
\begin{equation*}
\label{delta}
\max_{\omega\in\Omega, \omega<\omega_m^F}\left\{\bar{\alpha}(\Lambda_m^{\omega_m^F}+\beta\sqrt{\Lambda_m^{\omega_m^F}},\omega)\right\}\leq\Delta,
\end{equation*}
and
\begin{equation*}
\label{delta2}
\max_{\omega\in\Omega, \omega>\omega_m^F}\left\{1-\bar{\alpha}(\Lambda_m^{\omega_m^F}+\beta\sqrt{\Lambda_m^{\omega_m^F}},\omega)\right\}\leq\Delta.
\end{equation*}
Hence, for all $m\geq\bar{m}$, the left-hand side of the constraint in \eqref{eq:Random_Single_Chance2} is bounded above by
\begin{equation}
\small
\label{reduced_constraint_UB}
\left(\sum_{\omega\in\Omega,\omega<\omega_m^F}p^{\omega}\right)\Delta+\sum_{\omega\in\Omega,\omega>\omega_m^F}p^{\omega}+p^{\omega_m^F}\bar{\alpha}(\Lambda_m^{\omega_m^F}+\beta\sqrt{\Lambda_m^{\omega_m^F}},\omega_m^F)
\end{equation}
and bounded below by
\begin{equation}
\small
\label{reduced_constraint_LB}
\left(\sum_{\omega\in\Omega,\omega>\omega_m^F}p^{\omega}\right)(1-\Delta)+p^{\omega_m^F}\bar{\alpha}(\Lambda_m^{\omega_m^F}+\beta\sqrt{\Lambda_m^{\omega_m^F}},\omega_m^F).
\end{equation}
Then for $\Delta$ chosen sufficiently small, we have the following conclusions. 
For any $m\geq\bar{m}$, if $\omega_m^F < \omega_i$, \eqref{reduced_constraint_LB} indicates a contradiction of feasibility for $F_{\Lambda_m}$. If $\omega_m^F > \omega_i$, \eqref{reduced_constraint_UB} indicates that model \eqref{eq:Random_Single_Chance2} is feasible for all $m\geq \bar{m}$. However, model \eqref{eq:Random_Single_Chance2} is also feasible in the $\omega_i$ case for all $m\geq \bar{m}$. 
This indicates that there exists an $\bar{m}$, such that for all $m\geq\bar{m}$ we have $\omega_i \in \mathbf{W}_m$. Then, for all $m\geq\bar{m}$, with $\omega_i$ fixed, we can re-write \eqref{eq:Random_Single_Chance2} as a deterministic model as follows:
\begin{equation}
\small
\min_{\beta \geq 0}\quad c(\beta,\omega_{i})\\ \qquad
\mbox{s.t. } p^{\omega_{i}} \mathbb{P}\left\{ W(\beta,\Lambda_m^{\omega_{i}})> 0 \right\}\leq \left(\epsilon-\sum_{k=i+1}^{|\Omega|}p^{\omega_{k}}\right)+\Delta_m.
\label{reduced_constraint_2}
\end{equation}
Here $\lim_{m\rightarrow\infty}\Delta_m=0$. Applying Lemma \ref{Det_Single_optimalityResult_extension}, we have $$\lim_{m\rightarrow \infty}\beta_{m}^{G}=\lim_{m\rightarrow \infty}\beta_{m}^{F}=\beta^{*}.$$
\end{proof}
Due to the continuity assumption on the cost function, the result above also implies
that the objective function values converge. 

\subsection{Multi-station Systems}
We now consider a multi-station system, again assuming that there are $L$ queues, whose dynamics are conditionally independent. We define $\Lambda=(\Lambda_{1},\ldots,\Lambda_{L})$ to be the random arrival rate vector. Let $\Lambda^{\omega}$ be a specific realization, where $\omega=(\omega_{1},\ldots,\omega_{L})$ is a sample point from the finite sample space $\Omega=\Omega_{1}\times\cdots\times\Omega_{L}$. Let $p^{\omega}$ be the probability assigned to scenario $\omega$.
We consider the model below:
\begin{equation}
\begin{split}
&\min_{\beta \geq 0, \omega^{key}\in \Omega}\quad \sum_{i=1}^{L}c_{i}(\beta_{i}, \omega_{i}^{key})
\qquad \mbox{s.t. } \sum_{\omega\in\Omega} p^{\omega} \mathbb{P}\left\{ \left.\bigcup_{i=1}^{L}\left\{W_{i}(\beta_{i},\Lambda_{i})> 0\right\}  \right| \Lambda=\Lambda^{\omega}\right\}\leq \epsilon,
\label{eq:Random_Chance1}
\end{split}
\end{equation}
which is equivalent to the following model:
\begin{equation}
\begin{split}
&\min_{\beta \geq 0, \omega^{key}\in \Omega}\quad \sum_{i=1}^{L}c_{i}(\beta_{i}, \omega_{i}^{key})\\ \qquad
&\mbox{s.t. } \sum_{\omega\in\Omega} p^{\omega}\prod_{i=1}^{L}\left.\mathbb{P}\left\{ W_{i}(\beta_{i},\Lambda_{i})= 0\right| \Lambda_i=\Lambda_i^{\omega_i} \right\} \geq 1-\epsilon,
\label{eq:Random_Chance3}
\end{split}
\end{equation}
where
$$\mathbb{P}\left\{ W_{i}(\beta_{i},\Lambda_{i})= 0|\Lambda_{i}=\Lambda_{i}^{\omega_i}\right\}=1-\bar{\alpha}(\Lambda_{i}^{\omega_i^{key}}+\beta_{i}\sqrt{\Lambda_{i}^{\omega_i^{key}}},\Lambda_{i}^{\omega_i}).$$
Again, the cost function $c_i(\cdot, \cdot)$ is assumed to have the following property: $c_i(\cdot,\omega)$ is strictly increasing and continuous for all $\omega \in \Omega$ for $i=1, \ldots, L$. 

Facing this random arrival-rate model, we may think that instead of solving the joint model \eqref{eq:Random_Chance3}, it would be easier to solve several single-station models. That is, we can treat the $L$ queues individually, and 
obtain the optimal staffing policy for each queue separately. For example, instead of solving model \eqref{eq:Random_Chance3}, we consider solving the following set of individual models, indexed by
$i$:
%
\begin{equation}
\begin{split}
\label{eq:Chance3_1}
&\min_{\beta_{i} \geq 0, \omega_{i}^{key}\in \Omega_{i}}\quad c_{i}(\beta_{i}, \omega_{i}^{key})\\ \qquad
&\mbox{s.t. }\sum_{\omega_{i}\in\Omega_{i}}p_{i}^{\omega_{i}}\mathbb{P}\left\{ W_{i}(\beta_{i},\Lambda_{i})= 0 \mid \Lambda_{i}=\Lambda_{i}^{\omega_i} \right\}\\ \quad
&\qquad\geq \sqrt[ L]{1-\epsilon}, \quad i=1,2,\ldots,L,
\end{split}
\end{equation}
where $p_{i}^{\omega_{i}}$ is the marginal probability of scenario $\omega_{i}\in\Omega_{i}$.

When decomposing the joint model \eqref{eq:Random_Chance3} into the models in \eqref{eq:Chance3_1}, it is difficult to decide on the right-hand side of the constraint in each individual problem. In \eqref{eq:Chance3_1}, we set the right-hand side of each to be $\sqrt[L]{1-\epsilon}$ with the notion that the service level should be the same across the individual stations if there is no managerial reason to favor one station over another. 
Although decomposing the problem in this manner improves tractability, the resulting solution may be
undesirable, as shown in the following example.

\begin{example}
\label{mul_random_EX}
Let $L=2$ and consider the $M/M/n$ system as shown in Figure \ref{ExPic}. Suppose each queue has random arrival rate $\Lambda_{i}$, $i=1,2$. Assume there are two scenarios for the arrival rate of queue 1 (high and low) and there are three scenarios for the arrival rate of queue 2 (high, medium and low). The joint probability distribution is given in Table \ref{table_1}. The realizations of $\Lambda$ for each queue under each scenario are given in Table \ref{table_2}. We assume a linear cost with cost coefficients $c_{1}=\$5$, $c_{2}=\$3$ and a service level threshold value of $\epsilon=0.05$.

\begin{figure}[htp]
\begin{center}
\includegraphics[scale=0.4]{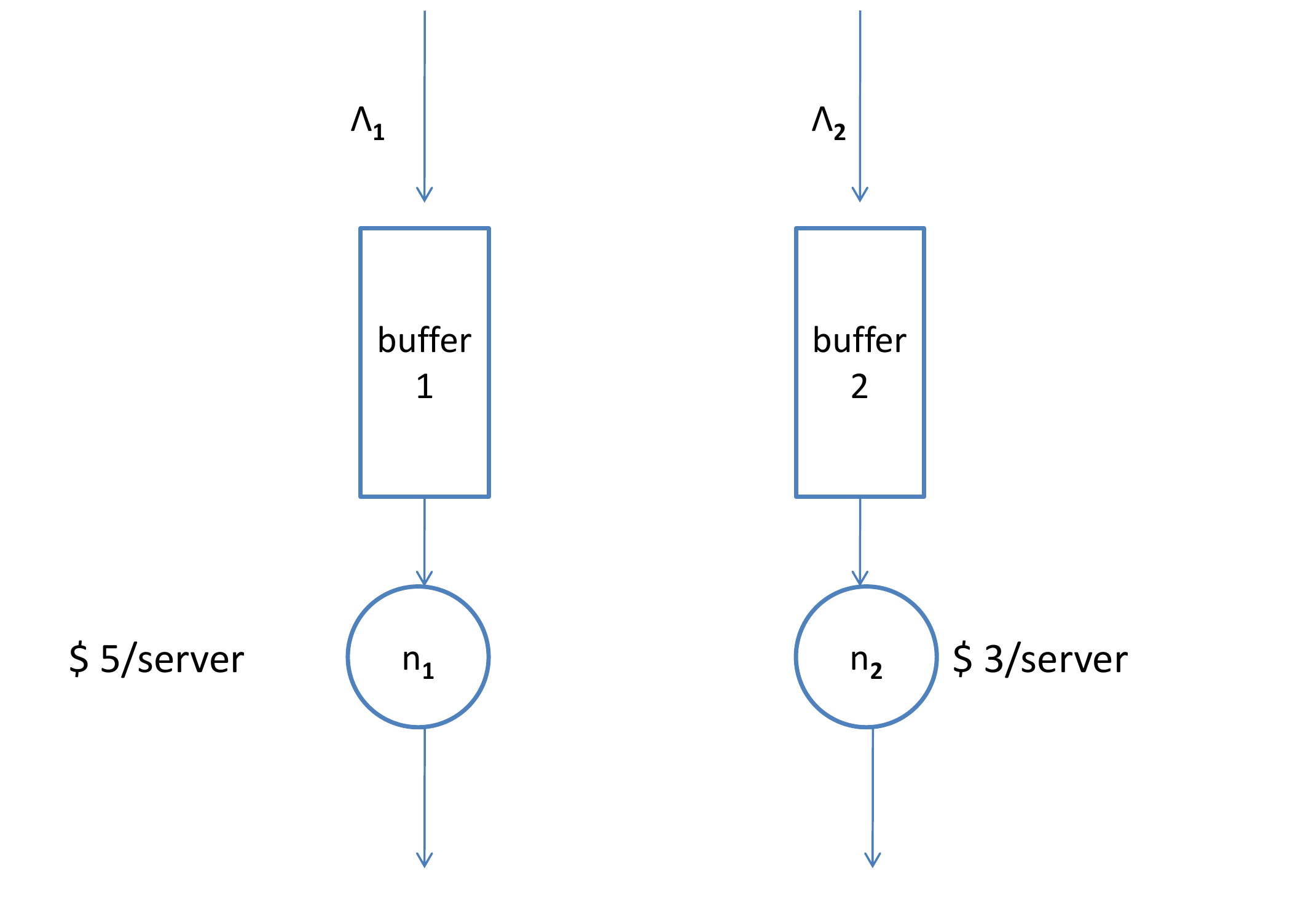}
\caption{Example \ref{mul_random_EX} System}
\label{ExPic}
\end{center}
\end{figure}

\begin{table}[htp]
 \begin{center}
 \caption{Joint Probability for Example \ref{mul_random_EX}}
  \begin{tabular}{ | l | l | l | l |}
   \hline
     $p^{(\omega_1,\omega_2)}$& $\omega_{2}=high$ & $\omega_{2}=medium$ & $\omega_{2}=low$ \\ \hline
    $\omega_{1}=high$ & 0.03& 0.21 & 0.1 \\ \hline
    $\omega_{1}=low$ & 0.01 & 0.17 &0.48 \\ \hline
  \end{tabular}
  \label{table_1}
  \end{center}
  \end{table}

\begin{table}[htp]
 \begin{center}
 \caption{Arrival Rates for Example \ref{mul_random_EX}}
  \begin{tabular}{ | l | l | l | l |}
   \hline
     & high & medium & low \\ \hline
    $\Lambda_1$ for queue 1& 450& NA & 350 \\ \hline
    $\Lambda_2$ for queue 2& 300 & 200 &100 \\ \hline
  \end{tabular}
  \label{table_2}
  \end{center}
  \end{table}

\end{example}

For the given parameters we solve the exact (non-asymptotic) versions of the corresponding models
as given in \eqref{eq:Random_Chance3} and \eqref{eq:Chance3_1}. From the solutions in Table \ref{table_3} we can see that while both achieve the same service level, the cost of the staffing policy from the decoupled models \eqref{eq:Chance3_1} is about $5\%$ more than that of the joint model \eqref{eq:Random_Chance3}.

\begin{table}[htp]
 \begin{center}
 \caption{Solution Comparison}
   \begin{tabular}{ | l | l | l | l |}
    \hline
      & exact model \eqref{eq:Random_Chance3}& decoupled models \eqref{eq:Chance3_1} \\ \hline
     $n$ &$(n_{1}^{*}=496, n_{2}^{*}=235)$& $(\overline{n}_{1}=484, \overline{n}_{2}=306)$\\ \hline
     cost $(c_{1}n_{1}+c_{2}n_{2})$ & 3185 & 3338\\ \hline
     $\mathbb{E}_{\Lambda}\left[\mathbb{P}\left\{\bigcup_{i=1}^{2}wait_{i}(n_{i},\Lambda_{i})> 0\right\} \right]$& 0.05 & 0.05\\ \hline
   \end{tabular}
   \label{table_3}
   \end{center}
   \end{table}
 \qed

We now discuss how to formulate an asymptotics for joint model \eqref{eq:Random_Chance3}. Again, we consider a sequence of queueing systems with increasing arrival rates. The asymptotics of the arrival
rates are similar to the scheme we considered in the single-station system. 
We assume there is a base value of the arrival rate in all scenarios. Let this base rate be $\Lambda^{0}=({\Lambda^{0}}^{\omega_{1}},\ldots,{\Lambda^{0}}^{\omega_{|\Omega|}})$, where each ${\Lambda^{0}}^{\omega_{k}}, k=1,\ldots,|\Omega|$, is an $L$-vector, since it represents the base arrival rate the for $L$-station system in scenario $\omega_{k}$. 
For $m \in \mathbb{N}$ let the arrival rate in the $m^{th}$ system be $\Lambda_{m}=m\Lambda_{0}$
and again let $m\rightarrow\infty$.

As in the single-station system, the manager must pick the square-root safety factor $\beta_{i}, i=1,\ldots,L$  for each queue, before the realization of $\Lambda_{i}, i=1,\ldots,L$. Thus once the staffing factors are fixed, then in at most one scenario, $\omega^{key}=(\omega_{1}^{key},\ldots,\omega_{L}^{key})$, the probability of waiting in each queue converges to a value strictly between 0 and 1 for large arrival rates. That is we can find a key scenario $\omega^{key}$, such that $\forall\omega\neq\omega^{key}$ and $\forall k\in\{1,\ldots,L\}$ if $\Lambda_{k}^{\omega_{k}}>\Lambda_{k}^{\omega_{k}^{key}}$, the limiting probability of not waiting for service in queue $k$ under this scenario is 0; and, if $\Lambda_{k}^{\omega_{k}}<\Lambda_{k}^{\omega_{k}^{key}}$, the limiting probability of not waiting for service in queue $k$ under this scenario is 1. Thus, if a key scenario can be identified, the random parameter model reduces to a deterministic model. However, unlike the single-station system, it is not easy to identify a key scenario. Just as a multivariate distribution does not have a unique quantile, in this problem, the key scenario need not be unique, since the QoS ``risk'' can
be spread in a number of ways. If one knows in advance how to allocate the probability embodied
in $\epsilon$ to each station, then it is relatively easy to identify the key scenario and for any such
allocation, the existence of a key scenario for each station is guaranteed using the previous single-station arguments. In theory, an integer programming model can be created to find the allocation of
$\epsilon$ which minimizes the staffing costs.

We are now ready to present the stochastic arrival rate versions of the results in Section
\ref{sec:mss}. As in the deterministic rate case, we first formulate the ``dualized'' version
of (\ref{eq:Random_Chance3}), a model which we denote by $F_{\Lambda}$: 
\begin{equation}
\min_{\beta \geq 0, \omega^{key}\in \Omega}\quad \sum_{i=1}^{L}c_{i}(\beta_{i}, \omega_i^{key})+\delta\left(1-\sum_{\omega\in\Omega} p^{\omega}\prod_{i=1}^{L}\left.\mathbb{P}\left\{ W_{i}(\beta_{i},\Lambda_{i})= 0\right| \Lambda_i=\Lambda_i^{\omega_i} \right\}\right).\\ \qquad
\label{mss:flambda}
\end{equation}
As before, the corresponding approximate model is denoted $G_{\Lambda}$:
\begin{equation}
\min_{\beta \geq 0, \omega^{key}\in \Omega}\quad \sum_{i=1}^{L}c_{i}(\beta_{i},\omega_i^{key})+\delta\left(1- \sum_{\omega\in\Omega} p^{\omega}\left\{\prod_{i=1}^{L}\left(1-UB(\beta_{i},\Lambda_{i}^{\omega})\right)\right\}\right).\\ \qquad
\label{mss:glambda}
\end{equation}

With these formulations, we now have the following extension of Lemma \ref{Uniform_Converge_Multi_Term}:

\begin{lemma}
\label{mss:uc}
Let $f_{m}(\cdot, \cdot)$ denote the objective function of model $F_{\lambda}$ as defined in \eqref{mss:flambda}, with arrival rate $\Lambda^m$. And, let $g_{m}(\cdot, \cdot)$ denote the objective function of model $G_{\lambda}$ as defined in \eqref{mss:glambda}, with arrival rate $\Lambda^m$. Then, $$\lim_{m\rightarrow\infty}\sup_{\beta\geq0, \omega^{key} \in \Omega}\left(g_{m}(\beta, \omega^{key} )-f_{m}(\beta, \omega^{key})\right)=0.$$
\end{lemma}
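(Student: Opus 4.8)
The plan is to follow the template of Lemma~\ref{Uniform_Converge_Multi_Term}, exploiting that the staffing-cost terms $\sum_{i=1}^{L}c_i(\beta_i,\omega_i^{key})$ appear identically in $f_m$ and $g_m$ and hence cancel in the difference. Write $P_i^{\omega}:=\mathbb{P}\{W_i(\beta_i,\Lambda_i)>0\mid\Lambda_i=\Lambda_i^{\omega_i}\}$ for the genuine waiting probability and $UB_i^{\omega}$ for its JVLZ upper bound, both evaluated at the \emph{common} staffing level $n_i=\Lambda_{m,i}^{\omega_i^{key}}+\beta_i\sqrt{\Lambda_{m,i}^{\omega_i^{key}}}$ fixed by the key scenario; in the stable regime $n_i\ge\Lambda_{m,i}^{\omega_i}$ these are $\bar{\alpha}(n_i,\Lambda_{m,i}^{\omega_i})$ and its JVLZ bound, and otherwise both take the value $1$ dictated by Corollary~\ref{HWcorollary}. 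The cancellation then leaves
\[
g_m(\beta,\omega^{key})-f_m(\beta,\omega^{key}) = \delta\sum_{\omega\in\Omega}p^{\omega}\left[\prod_{i=1}^{L}\bigl(1-P_i^{\omega}\bigr)-\prod_{i=1}^{L}\bigl(1-UB_i^{\omega}\bigr)\right].
\]
Since $UB_i^{\omega}\ge P_i^{\omega}$ and all factors lie in $[0,1]$, each bracket is nonnegative, so the supremum is of a nonnegative quantity and no absolute values are needed.

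Next I would apply the elementary telescoping bound $0\le\prod_i a_i-\prod_i b_i\le\sum_i(a_i-b_i)$, valid for $0\le b_i\le a_i\le 1$, with $a_i=1-P_i^{\omega}$ and $b_i=1-UB_i^{\omega}$, obtaining the per-scenario estimate $\prod_i(1-P_i^{\omega})-\prod_i(1-UB_i^{\omega})\le\sum_i(UB_i^{\omega}-P_i^{\omega})$. Because each $UB_i^{\omega}$ and $P_i^{\omega}$ depends on $\omega$ only through its $i$-th coordinate, I can sum the joint weights down to the marginals $p_i^{\omega_i}$, and because the station terms then separate in the components of $\beta$, the supremum distributes across the (finite) sums:
\[
\sup_{\beta\ge 0,\;\omega^{key}\in\Omega}\bigl(g_m-f_m\bigr)\le\delta\max_{\omega^{key}\in\Omega}\sum_{i=1}^{L}\sum_{\omega_i\in\Omega_i}p_i^{\omega_i}\sup_{\beta_i\ge 0}\bigl(UB_i^{\omega_i}-P_i^{\omega_i}\bigr).
\]
As $\Omega$ is finite, it remains only to show that $\sup_{\beta_i\ge 0}\bigl(UB_i^{\omega_i}-P_i^{\omega_i}\bigr)\to 0$ as $m\to\infty$ for each fixed station $i$, key component $\omega_i^{key}$, and realized component $\omega_i$.

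I expect this last single-station uniform-convergence statement to be the main obstacle, because the staffing level is built from the key rate $\Lambda_{m,i}^{\omega_i^{key}}$ while the Erlang-C is evaluated at the realized rate $\Lambda_{m,i}^{\omega_i}$, so the pair $(n_i,\Lambda_{m,i}^{\omega_i})$ is generally \emph{not} of the canonical square-root-staffed form required by Theorem~\ref{UniConvOfBounds}. I would remove this mismatch by the reparameterization $\tilde{\beta}_i:=(n_i-\Lambda_{m,i}^{\omega_i})/\sqrt{\Lambda_{m,i}^{\omega_i}}$, under which $UB_i^{\omega_i}-P_i^{\omega_i}$ becomes exactly $UB(\tilde{\beta}_i,\Lambda_{m,i}^{\omega_i})-\tilde{\alpha}(\tilde{\beta}_i,\Lambda_{m,i}^{\omega_i})$ in the notation of Theorem~\ref{UniConvOfBounds}, with $\Lambda_{m,i}^{\omega_i}=m\Lambda_{0,i}^{\omega_i}\to\infty$. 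Corollary~\ref{HWcorollary} then dictates three regimes. If $\omega_i=\omega_i^{key}$ the level is exactly square-root staffed, $\tilde{\beta}_i=\beta_i$ sweeps $[0,\infty)$, and Theorem~\ref{UniConvOfBounds} applies verbatim. If $\Lambda_{0,i}^{\omega_i}<\Lambda_{0,i}^{\omega_i^{key}}$ (underloaded) one checks $n_i>\Lambda_{m,i}^{\omega_i}$ for every $\beta_i\ge 0$, so $\tilde{\beta}_i$ remains in $(0,\infty)$ and the difference is bounded by $\sup_{\tilde{\beta}>0}\bigl(UB-\tilde{\alpha}\bigr)$. If $\Lambda_{0,i}^{\omega_i}>\Lambda_{0,i}^{\omega_i^{key}}$ (overloaded) I would split the $\beta_i$-axis at the threshold $\beta_i^{\ast}$ where $n_i=\Lambda_{m,i}^{\omega_i}$: for $\beta_i>\beta_i^{\ast}$ the level is stable and the same Theorem~\ref{UniConvOfBounds} bound applies, while for $\beta_i\le\beta_i^{\ast}$ both the waiting probability and its upper bound attain the value $1$ of Corollary~\ref{HWcorollary}, so the difference vanishes identically on that range. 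In all three regimes the supremum over $\beta_i$ is dominated by $\sup_{\tilde{\beta}>0}\bigl(UB(\tilde{\beta},\lambda)-\tilde{\alpha}(\tilde{\beta},\lambda)\bigr)$, which tends to $0$ as $\lambda=\Lambda_{m,i}^{\omega_i}\to\infty$ by Theorem~\ref{UniConvOfBounds}; the finite maximum over $\omega^{key}$ and the finite sums over $i$ and $\omega_i$ then deliver the claim.
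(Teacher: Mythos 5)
Your proof is correct, and it is substantially more complete than what the paper itself provides: the paper's entire argument for Lemma~\ref{mss:uc} is the remark that, since $\Omega$ is finite, the result ``follows from minor modifications to the Lemma~\ref{Uniform_Converge_Multi_Term} proof.'' Where that proof controls the product difference by induction on the number of stations, you substitute the one-shot telescoping estimate $0\le\prod_i a_i-\prod_i b_i\le\sum_i(a_i-b_i)$ for $0\le b_i\le a_i\le 1$ and then collapse the joint weights to marginals --- equivalent in force but shorter, and both routes ultimately rest on Theorem~\ref{UniConvOfBounds} plus finiteness of $\Omega$. The genuinely new content in your writeup is the regime analysis, and it is precisely the ``minor modification'' the paper never spells out: in a scenario $\omega\neq\omega^{key}$ the staffing level is built from $\Lambda_{m,i}^{\omega_i^{key}}$ while the Erlang-C is evaluated at $\Lambda_{m,i}^{\omega_i}$, so the pair is not square-root staffed and Theorem~\ref{UniConvOfBounds} does not apply verbatim. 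Your reparameterization $\tilde{\beta}_i=(n_i-\Lambda_{m,i}^{\omega_i})/\sqrt{\Lambda_{m,i}^{\omega_i}}$ --- which works exactly because the theorem's supremum ranges over \emph{all} positive staffing factors, so the $m$-dependence of $\tilde{\beta}_i$ is harmless --- together with the stable/unstable split at $\beta_i^{\ast}$ is the right repair. One point deserves an explicit flag: your stipulation that both $P_i^{\omega}$ and $UB_i^{\omega}$ equal $1$ on the unstable range $n_i\le\Lambda_{m,i}^{\omega_i}$ is a convention you impose rather than something the paper states (Theorem~\ref{BoundsForErlangC} only asserts the JVLZ bounds for $n>\lambda$, and Corollary~\ref{HWcorollary} speaks only of limits, not finite-$m$ values). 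It is, however, the reading forced by correctness: if $UB(\beta_i,\Lambda_i^{\omega})$ in \eqref{mss:glambda} were taken literally as the bound at the square-root-staffed level $\Lambda_i^{\omega}+\beta_i\sqrt{\Lambda_i^{\omega}}$, then with $\omega^{key}$ the largest scenario and $\omega$ a smaller one, the $F$-term $\tilde{\alpha}$ would tend to $0$ while the $G$-term would tend to the Halfin-Whitt value $\alpha_{HW}(\beta_i)\in(0,1)$, so the supremum would stay bounded away from zero and the lemma would be false. So state the convention explicitly when you write this up; with that caveat, the argument is sound and, arguably, supplies the proof the paper only gestures at.
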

Since we assume that the sample space $\Omega$ is finite, the lemma follows from minor modifications
to the Lemma \ref{Uniform_Converge_Multi_Term} proof. We now present our final asymptotic optimality
result, which relates the models presented in \eqref{mss:flambda} and \eqref{mss:glambda}.

\begin{theorem}
\label{mss:main}
Let $f_{m}(\cdot, \cdot)$ denote the objective function and let $(\beta_{m}^{F}, \omega^{F}_m)$ denote an optimal solution of model $F_{\Lambda}$ as defined in \eqref{mss:flambda}, with arrival rate $\Lambda^m$. Let $g_{m}(\cdot, \cdot)$ denote the objective function, and let $(\beta_{m}^{G}, \omega^{G}_m)$ denote an optimal solution of model $G_\Lambda$ as defined in \eqref{mss:glambda}, with arrival rate $\Lambda^m$. Then $$\lim_{m\rightarrow\infty}\left(f_{m}(\beta_{m}^{G}, \omega^{G}_m)-f_{m}(\beta_{m}^{F},\omega^{F}_m)\right)=0.$$
\end{theorem}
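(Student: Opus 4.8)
The plan is to mirror the sandwich argument used to prove Theorem~\ref{multi_objvalue_converge}, now carrying the discrete scenario choice $\omega^{key}$ along as an extra optimization variable and substituting Lemma~\ref{mss:uc} for Lemma~\ref{Uniform_Converge_Multi_Term}. First I would record the easy lower bound on the quantity of interest: since $(\beta_m^F,\omega_m^F)$ minimizes $f_m$ over the joint feasible set $\{\beta\ge 0\}\times\Omega$, evaluating $f_m$ at the competing point $(\beta_m^G,\omega_m^G)$ cannot produce a smaller value, so $f_m(\beta_m^G,\omega_m^G)-f_m(\beta_m^F,\omega_m^F)\ge 0$.

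Next I would produce a matching upper bound. The single verification required is that $g_m\ge f_m$ pointwise on $\{\beta\ge 0\}\times\Omega$. This holds because $g_m$ is obtained from $f_m$ by replacing each continuous Erlang-C factor by the corresponding JVLZ upper bound: since $\bar{\alpha}\le UB$ (Theorem~\ref{BoundsForErlangC}), each no-wait factor satisfies $1-UB\le 1-\bar{\alpha}$, and these factors are nonnegative, so multiplying across $i=1,\ldots,L$ and averaging against the probabilities $p^{\omega}$ preserves the inequality; because $\delta>0$ and the staffing-cost terms are identical, this yields $g_m(\beta,\omega^{key})\ge f_m(\beta,\omega^{key})$ everywhere. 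Applying this at $(\beta_m^G,\omega_m^G)$, together with the optimality of $(\beta_m^G,\omega_m^G)$ for $G_{\Lambda}$ (which gives $g_m(\beta_m^G,\omega_m^G)\le g_m(\beta_m^F,\omega_m^F)$), produces
\[
0\le f_m(\beta_m^G,\omega_m^G)-f_m(\beta_m^F,\omega_m^F)\le g_m(\beta_m^F,\omega_m^F)-f_m(\beta_m^F,\omega_m^F).
\]

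The concluding step is to squeeze. The right-hand side above is bounded by $\sup_{\beta\ge 0,\,\omega^{key}\in\Omega}\bigl(g_m(\beta,\omega^{key})-f_m(\beta,\omega^{key})\bigr)$, and Lemma~\ref{mss:uc} sends this supremum to $0$ as $m\to\infty$. The squeeze then forces $f_m(\beta_m^G,\omega_m^G)-f_m(\beta_m^F,\omega_m^F)\to 0$, which is the claim; continuity of the cost functions gives convergence of objective values as well.

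I do not expect a serious obstacle, since all of the genuine analysis has been packaged into Lemma~\ref{mss:uc}, whose own proof rests on the finiteness of $\Omega$ and the single-station uniform convergence of Theorem~\ref{UniConvOfBounds}. The only bookkeeping beyond the deterministic case of Theorem~\ref{multi_objvalue_converge} is that the supremum defining uniform convergence is now taken jointly over the continuous variable $\beta$ and the finite set $\Omega$; finiteness of $\Omega$ reduces the scenario part to a maximum over finitely many uniformly convergent families, so no new analytic difficulty enters.
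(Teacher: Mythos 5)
Your proposal is correct and follows exactly the route the paper takes: the paper proves Theorem~\ref{mss:main} by declaring it ``completely analogous'' to Theorem~\ref{multi_objvalue_converge}, i.e., the same sandwich argument $0\le f_m(\beta_m^G,\omega_m^G)-f_m(\beta_m^F,\omega_m^F)\le g_m(\beta_m^F,\omega_m^F)-f_m(\beta_m^F,\omega_m^F)$, squeezed via Lemma~\ref{mss:uc} in place of Lemma~\ref{Uniform_Converge_Multi_Term}. Your write-up simply makes explicit the pointwise bound $g_m\ge f_m$ and the role of the finite scenario set, which the paper leaves implicit.
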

The theorem follows by completely analogous arguments used to prove Theorem \ref{multi_objvalue_converge}, applying Lemma \ref{mss:uc}
instead of Lemma \ref{Uniform_Converge_Multi_Term}.

Finally, we use the data in Example \ref{mul_random_EX} to illustrate how the key scenario idea can
be used to solve a multi-station problem. It is obvious that the key scenario in this example is $(\omega_1, \omega_2)=(high, medium)$. Otherwise the value of the left-hand side of the constraint in model \eqref{eq:Random_Chance3} cannot exceed $1-\epsilon$. Also, with the key scenario being $(high, medium)$, we can select $(\beta_{1}, \beta_{2})$ to satisfy the constraint. Thus it is not necessary to consider scenario $(high, high)$, which is more costly. After finding the key scenario for Example \ref{mul_random_EX}, we can write model \eqref{eq:Random_Chance3} for the asymptotic version of the original random rates problem as:
\begin{equation}
\begin{split}
\label{eq:mul_random_EX}
&\min_{\beta \geq 0}\quad 5\beta_{1}+3\beta_{2}\\ \qquad
&\mbox{s.t. } 0.21\left(1-\tilde{\alpha}(\beta_{1},450)\right)(1-\tilde{\alpha}(\beta_{2},200))\\
&+0.1(1-\tilde{\alpha}(\beta_{1},450))+0.17(1-\tilde{\alpha}(\beta_{2},200))\\ \quad
&+0.48 \geq (1-0.05).
\end{split}
\end{equation}
%
Solving model (\ref{eq:mul_random_EX}) we obtain the optimal solution $(\beta_1^{*},\beta_2^{*})=(2.15,2.48)$, which gives $$(n_1^{*},n_2^{*})=\left(450+2.15\cdot\sqrt{450},\quad 200+2.48\cdot\sqrt{200}\right)\approx (496, 235).$$

We now solve the problem using the individual models as presented in \eqref{eq:Chance3_1}. The individual models for station 1 and station 2 are:
\begin{equation}
\begin{split}
&\min_{\beta_{1} \geq 0}\quad 5\beta_{1}\\ \qquad
&\mbox{s.t. } 0.34(1-\tilde{\alpha}(\beta_{1},450)) +0.66(1-\tilde{\alpha}(\beta_{1},350))\geq \sqrt{1-0.05},
\label{eq:mul_random_EX_1}
\end{split}
\end{equation}
and
\begin{equation}
\begin{split}
&\min_{\beta_{2} \geq 0}\quad 3\beta_{2}\\ \qquad
&\mbox{s.t. } 0.04(1-\tilde{\alpha}(\beta_{2},300)) +0.38(1-\tilde{\alpha}(\beta_{2},200))\\ \qquad
&+0.58(1-\tilde{\alpha}(\beta_{2},150))\geq \sqrt{1-0.05}.
\label{eq:mul_random_EX_2}
\end{split}
\end{equation}
The key scenarios for queue 1 and queue 2 are both $high$. The individual models above are equivalent to:
\begin{equation}
\min_{\beta_{1} \geq 0}\quad 5\beta_{1}\\ \qquad
\mbox{s.t. } 0.34(1-\tilde{\alpha}(\beta_{1},450)) +0.66 \geq \sqrt{1-0.05},
\label{eq:mul_random_EX_1_2}
\end{equation}
and
\begin{equation}
\min_{\beta_{2} \geq 0}\quad 3\beta_{2}\\ \qquad
\mbox{s.t. } 0.04(1-\tilde{\alpha}(\beta_{2},300)) +0.38+0.58 \geq \sqrt{1-0.05}.
\label{eq:mul_random_EX_2_2}
\end{equation}
The optimal  to solutions to \eqref{eq:mul_random_EX_1_2} and \eqref{eq:mul_random_EX_2_2} are $\overline{\beta}_1=1.6$, $\overline{\beta}_2=0.36$. This gives $\overline{n}_1=450+1.6\cdot\sqrt{450}\approx 484$, $\overline{n}_2=300+0.36\cdot\sqrt{300}\approx 306$.
As noted in Table \ref{table_3}, the cost of the solution obtained via decoupling the stations is about 5\% higher 
than the cost obtained using the joint model. 

\textbf{Acknowledgements.} This research was supported by National Science Foundation grant  CMMI-0800676. Any opinions, findings and conclusions or recommendations expressed in this material are those of the author(s) and do not necessarily reflect the views of the National Science Foundation (NSF).



\section*{Appendices}
\label{chapter_appendices}

\begin{proof}[Proof of Lemma \ref{UBDecrease}]
\label{UBDecreaseProof}
To show $UB(\beta,\lambda)$ is strictly decreasing in $\lambda$, it suffices to show the denominator of \eqref{eq:UB}, $\rho+\gamma\left(\frac{\Phi(a)}{\phi(a)}+\frac{2}{3\sqrt{n}}\right)$, is strictly increasing in $\lambda$ for any $\beta>0$. First note that $$\rho+\gamma \left(\frac{2}{3\sqrt{n}}\right)=\frac{3\lambda+2\beta\sqrt{\lambda}}{3\left(\lambda+\beta\sqrt{\lambda}\right)}$$ is strictly increasing in $\lambda$ for any $\beta>0$, since $$\frac{\partial\left[\frac{3\lambda+2\beta\sqrt{\lambda}}{3(\lambda+\beta\sqrt{\lambda})}\right]}{\partial \lambda}=\frac{\beta\sqrt{\lambda}}{6(\lambda+\beta\sqrt{\lambda})^2}>0 \quad \forall\beta, \lambda>0.$$ Thus, we only need to prove that $\frac{\gamma\Phi(a)}{\phi(a)}$ is non-decreasing in $\lambda$ for any $\beta, \lambda>0$. Let $$f(\beta,\lambda)=\gamma\frac{\Phi(a)}{\phi(a)}=\beta \sqrt{\frac{\lambda}{\lambda+\beta\sqrt{\lambda}}} \frac{\Phi(a)}{\phi(a)}.$$ We can show that $f(\beta,\lambda)$ is strictly increasing by verifying that $\frac{\partial f}{\partial \lambda}>0$ for any $\beta, \lambda>0$. We have
\begin{equation}
\label{partialf}
\frac{\partial f}{\partial \lambda}=\frac{\beta ^2}{4 (\lambda+\beta\sqrt{\lambda})\sqrt{\lambda+\beta\sqrt{\lambda}}}\frac{\Phi(a)}{\phi(a)}+\beta\sqrt{\frac{\lambda}{\lambda+\beta\sqrt{\lambda}}}\frac{[\phi(a)+\Phi(a)a]\frac{\partial a}{\partial\lambda}}{\phi(a)}.
\end{equation}
From \eqref{partialf}, it is obvious that $\frac{\partial f}{\partial\lambda}>0$ for any $\beta, \lambda>0$, if $\frac{\partial a}{\partial\lambda}>0$ for any $\beta, \lambda>0$. We have
\begin{equation}
\label{partialalpha}
\frac{\partial a}{\partial \lambda}=\frac{-\frac{\beta}{\sqrt{\lambda}}+\left(1+\frac{\beta}{2\sqrt{\lambda}}\right)\ln\left(1+\frac{\beta}{\sqrt{\lambda}}\right)}{\sqrt{-2\left(\beta\sqrt{\lambda}+\lambda\right)\left(1-\frac{\lambda}{\beta\sqrt{\lambda}+\lambda}+\ln\left(\frac{\lambda}{\beta\sqrt{\lambda}+\lambda}\right)\right)}}.
\end{equation}
The denominator of \eqref{partialalpha} is strictly positive, so it suffices to show $$g(\beta,\lambda)=-\frac{\beta}{\sqrt{\lambda}}+\left(1+\frac{\beta}{2\sqrt{\lambda}}\right)\ln\left(1+\frac{\beta}{\sqrt{\lambda}}\right)>0,\quad  \forall \beta, \lambda >0.$$ We have
\begin{equation*}
\label{limit}
\lim_{\lambda\rightarrow\infty}g(\beta,\lambda)=0, \quad\forall \beta>0,
\end{equation*}
and
\begin{equation*}
\label{partialg}
\frac{\partial g}{\partial\lambda}=\frac{\beta}{2\lambda\sqrt{\lambda}}\left[\frac{\frac{\beta}{\sqrt{\lambda}}}{1+\frac{\beta}{\sqrt{\lambda}}}-\ln\left(1+\frac{\beta}{\sqrt{\lambda}}\right)\right].
\end{equation*}
Since $$\frac{x}{1+x}<\ln(1+x), \forall x>0,$$ we have $$\frac{\frac{\beta}{\sqrt{\lambda}}}{1+\frac{\beta}{\sqrt{\lambda}}}-\ln\left(1+\frac{\beta}{\sqrt{\lambda}}\right)<0, \quad\forall \beta, \lambda >0.$$ Thus we have $\frac{\partial g}{\partial\lambda}<0$ and $$\lim_{\lambda\rightarrow\infty}g\left(\beta,\lambda\right)=0, \quad\forall \beta>0.$$ This proves $g(\beta,\lambda)>0, \forall \beta, \lambda>0$.
\end{proof}

\begin{proof}[Proof of Lemma \ref{1}] 
With $$g(\beta,\lambda)=\frac{\gamma}{(12n-1)}=\frac{\beta\sqrt{\lambda/\left(\lambda+\beta\sqrt{\lambda}\right)}}{12\left(\lambda+\beta\sqrt{\lambda}\right)-1},$$
we have that $g(\beta,\lambda)>0$ and is continuous in $(\beta,\lambda)$ for all $\beta>0, \lambda>10.$ (Below, we let $\lambda$ grow large. The value 10 here simply serves as a sufficiently large lower bound we use in establishing the desired result.) We have
\begin{equation*}
\begin{split}
\frac{\partial g(\beta,\lambda)}{\partial\beta}=-\frac{12\beta\sqrt{\lambda}\sqrt{\frac{\lambda}{\lambda+\beta\sqrt{\lambda}}}}{\left(-1+12\left(\lambda+\beta\sqrt{\lambda}\right)\right)^2} +
\frac{\sqrt{\frac{\lambda}{\lambda+\beta\sqrt{\lambda}}}}{-1+12\left(\lambda+\beta\sqrt{\lambda}\right)}\\-\frac{\beta\lambda^{3/2}}{2\sqrt{\frac{\lambda}{\lambda+\beta\sqrt{\lambda}}}\left(\lambda+\beta\sqrt{\lambda}\right)^{2}\left(-1+12\left(\lambda+\beta\sqrt{\lambda}\right)\right)}.
\end{split}
\end{equation*}
Evaluating
$$\frac{\partial g(\beta,\lambda)}
{\partial \beta}=0,$$
yields, after some algebra, $$\frac{\sqrt{\frac{\sqrt{\lambda}}{\beta+\sqrt{\lambda}}}\left(-12\beta^{2}\sqrt{\lambda}+\beta(-1+12\lambda)+2\sqrt{\lambda(-1+12\lambda)}\right)}{-1+12\beta\sqrt{\lambda}+12\lambda}=0,$$ or equivalently, $$-12\beta^{2}\sqrt{\lambda}+\beta(-1+12\lambda)+2\sqrt{\lambda(-1+12\lambda)}=0.$$ Then we have $$\widehat{\beta}=\frac{-1+12\lambda+\sqrt{1-120\lambda+1296\lambda^2}}{24\sqrt{\lambda}},$$ as the only positive root of this equation.
Also we have $g(0,\lambda)=0, \forall\lambda>10$, $$\lim_{\beta\rightarrow\infty}g(\beta,\lambda)=0, \quad\forall\lambda>10,$$ and $g(\widehat{\beta},\lambda)>0, \forall\lambda>10$. Thus $\widehat{\beta}$ is the global maximizer of $g\left(\beta,\lambda\right)$ for any $\lambda>10$. That is, $$g(\widehat{\beta},\lambda)=\max_{\beta>0} g\left(\beta,\lambda\right), \quad\forall \lambda>10.$$
Since $$\lim_{\lambda\rightarrow\infty}g(\widehat{\beta},\lambda)=0,$$ we have $$\lim_{\lambda\rightarrow\infty}\sup_{\beta>0}g(\beta,\lambda)=0.$$ This completes the proof. \end{proof}

\begin{proof}[Proof of Lemma \ref{3}] 
To show that $\rho\phi(a)+\gamma\Phi(a)$ is strictly increasing in $\beta$ for any sufficiently large $\lambda$, it suffices to demonstrate that $\rho\phi(a)+\gamma\Phi(a)$ is strictly increasing in $n$ for any sufficiently large $\lambda$, since $\beta$ and $n$ satisfy a linear relationship with a positive slope. Let $h(n,\lambda)=\rho\phi(a)+\gamma\Phi(a)$. We have $$\frac{\partial h(n,\lambda)}{\partial n}=\frac{-\lambda\phi(a)}{n^2}+\Phi(a)\frac{n+\lambda}{2n\sqrt{n}}+\frac{\lambda\phi(a)\ln\left(\frac{\lambda}{n}\right)}{n}-\frac{(n-\lambda)\phi(a)\ln\left(\frac{\lambda}{n}\right)}{a\sqrt{n}}.$$
First note that $$\frac{-\lambda\phi(a)}{n^2}+\Phi(a)\frac{n+\lambda}{2n\sqrt{n}}>\frac{-\lambda\phi(a)+n\Phi(a)}{n^2},$$ since $\textit{n}>\lambda\geq 1$. (Here, we take 1 as a lower bound on $\lambda$ since we establish a result for $\lambda$ that is sufficiently large.) Also since $$\frac{-\lambda\phi(a)+n\Phi(a)}{n^2}\geq \frac{-\lambda\phi(0)+n\Phi(0)}{n^2}> 0,$$ we have $$\frac{-\lambda\phi(a)}{n^2}+\Phi(a)\frac{n+\lambda}{2n\sqrt{n}}>0.$$ It remains then to show that $$\frac{\lambda\phi(a)\ln\left(\frac{\lambda}{n}\right)}{n}-\frac{(n-\lambda)\phi(a)\ln\left(\frac{\lambda}{n}\right)}{a\sqrt{n}}\geq0\quad\forall\beta>0, \lambda>0.$$
Some algebra demonstrates that this is
equivalent to $\lambda a \leq\sqrt{n}(n-\lambda)$. As shown by equation (50) in \cite{janssen_08}, $$a=\beta-\frac{1}{6}\beta^{2}\frac{1}{\sqrt{\lambda}}+O\left(1/\lambda\right).$$ So we have
$0<a<\beta$ for sufficiently large $\lambda$. Thus $$\lambda a<\lambda\beta=\sqrt{\lambda}(n-\lambda)<\sqrt{n}(n-\lambda),$$ since $\lambda<n$.
\end{proof}

\begin{proof}[Proof of Lemma \ref{2}]
All four terms in the formula on the left-hand side of inequality \eqref{211} are non-negative for all $\beta>0$, $\lambda\geq 1$. Thus it suffices to show that $h(\beta,\lambda)=\rho\phi(a)+\gamma\Phi(a)$
is uniformly bounded away from 0 for all sufficiently large $\lambda$ and $\beta>0$, since $\phi(\cdot)$ is positive and bounded. From Lemma \ref{3}, we know $h(\beta,\lambda)$ is strictly increasing in $\beta$ for all sufficiently large $\lambda$ and $h(0,\lambda)=\phi(0)>0$. Thus we have that $h(\beta,\lambda)$ is uniformly bounded away from 0 for all sufficiently large $\lambda$ and all $\beta>0$.
\end{proof}

\begin{proof}[Proof of Lemma \ref{UBDecreaseInBeta}]
Let $UB_{n}(n,\lambda)=UB(\frac{n-\lambda}{\sqrt{\lambda}},\lambda)$. Since $n=\lambda+\beta\sqrt{\lambda}$, to show $UB({\beta,\lambda})$ is strictly decreasing in $\beta$ for any $\lambda>0$, it is enough to verify that $UB_{n}(n,\lambda)$ is strictly decreasing in $n$, or equivalently that $$[UB_{n}(n,\lambda)]^{-1}= \frac{\lambda}{n}+\frac{n-\lambda}{\sqrt{n}}\left(\frac{\Phi(a)}{\phi(a)}+\frac{2}{3\sqrt{n}}\right),$$ is strictly increasing in $n$ for any $\lambda$, where $a$ is given in \eqref{eq:alpha}. Now,
\begin{equation}
\footnotesize
\label{eq:partialUBINV}
\frac{\partial [UB_{n}(n,\lambda)]^{-1}}{\partial n}=\frac{-\lambda}{3n^{2}}+\left(\frac{\lambda+n}{2n\sqrt{n}}\right)\left(\frac{\Phi(a)}{\phi(a)}\right)+\left(\frac{n-\lambda}{\sqrt{n}}\right)\left(\frac{\phi(a)+\Phi(a)a}{\phi(a)}\right)\left(\frac{\partial a}{\partial n}\right).
\end{equation}
Here, $$\frac{\partial a}{\partial n}=\frac{-\ln\frac{\lambda}{n}}{a}.$$ Since $n>\lambda$, we have $\frac{\partial a}{\partial n}>0$ and so the third term in \eqref{eq:partialUBINV} is non-negative.
Since $\Phi(a)/\phi(a)$ is strictly increasing in $a$ and $a$ is strictly increasing in $n$, we have that $\Phi(a)/\phi(a)$ is strictly increasing in $n$. Thus the first two terms are greater than $$\frac{-\lambda}{3n^{2}}+\frac{\lambda+n}{2n\sqrt{n}}\frac{\Phi(0)}{\phi(0)},$$ which itself is strictly positive, completing
the proof.
\end{proof}
\begin{proof}[Proof of Lemma \ref{Uniform_Converge_Multi_Term}]
We prove that $$\prod_{i=1}^{L}\left(1-UB( \beta_{i},\lambda_{i}^m)\right)-\prod_{i=1}^{L}\left(1-\tilde{\alpha}\left(\beta_{i},\lambda_{i}^m\right)\right)$$ converges uniformly to 0 in $\beta$ as $m \to \infty$,
via induction. 
Let $\underline{UB}(\beta_i,\lambda_i^m)$ denote $1-UB(\beta_i,\lambda_i^m)$ and let $\underline{\tilde{\alpha}}(\beta_i,\lambda_i^m)$ denote $1-\tilde{\alpha}\left(\beta_{i},\lambda_{i}^m\right)$. We first prove $$\underline{UB}(\beta_{1},\lambda_{1}^m)\underline{UB}(\beta_{2},\lambda_{2}^m)-\underline{\tilde{\alpha}}(\beta_{1},\lambda_{1}^m)\underline{\tilde{\alpha}}(\beta_{2},\lambda_{2}^m)$$ converges uniformly to 0 in $\beta=(\beta_{1},\beta_{2})$ as $m \to \infty$.
From Theorem \ref{UniConvOfBounds}, we have that this result holds separately for $UB(\beta_{1},\lambda_{1}^m)-\tilde{\alpha}(\beta_{1},\lambda_{1}^m)$ and $UB(\beta_{2},\lambda_{2}^m)-\tilde{\alpha}(\beta_{2},\lambda_{2}^m)$, which implies that it again holds separately for $\underline{UB}(\beta_{1},\lambda_{1}^m)-\underline{\tilde{\alpha}}(\beta_{1},\lambda_{1}^m)$ and $\underline{UB}(\beta_{2},\lambda_{2}^m)-\underline{\tilde{\alpha}}(\beta_{2},\lambda_{2}^m)$.
Also, note that
\begin{eqnarray*}
\underline{UB}(\beta_{1},\lambda_{1}^m)\underline{UB}(\beta_{2},\lambda_{2}^m)&-&\underline{\tilde{\alpha}}(\beta_{1},\lambda_{1}^m)\underline{\tilde{\alpha}}(\beta_{2},\lambda_{2}^m)\\
&=&\underline{UB}(\beta_{1},\lambda_{1}^m)\underline{UB}(\beta_{2},\lambda_{2}^m)-\underline{UB}(\beta_{1},\lambda_{1}^m)\underline{\tilde{\alpha}}(\beta_{2},\lambda_{2}^m)\\
&&+\underline{UB}(\beta_{1},\lambda_{1}^m)\underline{\tilde{\alpha}}(\beta_{2},\lambda_{2}^m)-\underline{\tilde{\alpha}}(\beta_{1},\lambda_{1}^m)\underline{\tilde{\alpha}}(\beta_{2},\lambda_{2}^m)\\
&=&\underline{UB}(\beta_{1},\lambda_{1}^m)\left(\underline{UB}(\beta_{2},\lambda_{2}^m)-\underline{\tilde{\alpha}}(\beta_{2},\lambda_{2}^m)\right)\\
&&+\left(\underline{UB}(\beta_{1},\lambda_{1}^m)-\underline{\tilde{\alpha}}(\beta_{1},\lambda_{1}^m)\right)\underline{\tilde{\alpha}}(\beta_{2},\lambda_{2}^m).
\end{eqnarray*}
Thus,
\begin{eqnarray*}
&&\lim_{m\rightarrow\infty}\sup_{\beta\geq0}\left(\underline{UB}(\beta_{1},\lambda_{1}^m)\underline{UB}(\beta_{2},\lambda_{2}^m)-\underline{\tilde{\alpha}}(\beta_{1},\lambda_{1}^m)\underline{\tilde{\alpha}}(\beta_{2},\lambda_{2}^m)\right)\\
&=&\lim_{m\rightarrow\infty}\sup_{\beta\geq0}\left\{\underline{UB}(\beta_{1},\lambda_{1}^m)\left(\underline{UB}(\beta_{2},\lambda_{2}^m)-\underline{\tilde{\alpha}}(\beta_{2},\lambda_{2}^m)\right)\right.\\
&&+\left.\left(\underline{UB}(\beta_{1},\lambda_{1}^m)-\underline{\tilde{\alpha}}(\beta_{1},\lambda_{1}^m)\right)\underline{\tilde{\alpha}}(\beta_{2},\lambda_{2}^m)\right\}\\
&\leq&\lim_{m\rightarrow\infty}\sup_{\beta\geq0}\left\{\underline{UB}(\beta_{1},\lambda_{1}^m)\left(\underline{UB}(\beta_{2},\lambda_{2}^m)-\underline{\tilde{\alpha}}(\beta_{2},\lambda_{2}^m)\right)\right\}\\
&&+\lim_{m\rightarrow\infty}\sup_{\beta\geq0}\left\{\left(\underline{UB}(\beta_{1},\lambda_{1}^m)-\underline{\tilde{\alpha}}(\beta_{1},\lambda_{1}^m)\right)\underline{\tilde{\alpha}}(\beta_{2},\lambda_{2}^m)\right\}.
\end{eqnarray*}
The above inequality holds because both terms are positive. Recall that both the Erlang-C formula and the JVLZ upper bound have range (0, 1]. This immediately implies that $$\lim_{m\rightarrow\infty}\sup_{\beta\geq0}\left\{\underline{UB}(\beta_{1},\lambda_{1}^m)\left(\underline{UB}(\beta_{2},\lambda_{2}^m)-\underline{\tilde{\alpha}}(\beta_{2},\lambda_{2}^m)\right)\right\}=0$$ and $$\lim_{m\rightarrow\infty}\sup_{\beta\geq0}\left\{\left(\underline{UB}(\beta_{1},\lambda_{1}^m)-\underline{\tilde{\alpha}}(\beta_{1},\lambda_{1}^m)\right)\underline{\tilde{\alpha}}(\beta_{2},\lambda_{2}^m)\right\}=0.$$ Thus, we have $$\lim_{m\rightarrow\infty}\sup_{\beta\geq0}\left(\underline{UB}(\beta_{1},\lambda_{1}^m)\underline{UB}(\beta_{2},\lambda_{2}^m)-\underline{\tilde{\alpha}}(\beta_{1},\lambda_{1}^m)\underline{\tilde{\alpha}}(\beta_{2},\lambda_{2}^m)\right)=0.$$ Next, assume that $\forall K\in\{2,3,\ldots,L-1\}$, $$\prod_{i=1}^{K}\left(1-UB( \beta_{i},\lambda_{i}^m)\right)-\prod_{i=1}^{K}\left(1-\tilde{\alpha}\left(\beta_{i},\lambda_{i}^m\right)\right)$$ converges uniformly to 0 in $\beta$ as $m \to \infty$. We now prove that for $K+1$, we have $$\prod_{i=1}^{K+1}\left(1-UB( \beta_{i},\lambda_{i}^m)\right)-\prod_{i=1}^{K+1}\left(1-\tilde{\alpha}\left(\beta_{i},\lambda_{i}^m\right)\right)$$ converges uniformly to 0 in $\beta$ as $m \to \infty$.

As above, we have
\begin{eqnarray*}
&&\left(\prod_{i=1}^{K}\left(1-UB( \beta_{i},\lambda_{i}^m)\right)\right)\underline{UB}(\beta_{K+1},\lambda_{K+1}^m)-\left(\prod_{i=1}^{K}\left(1-\tilde{\alpha}\left(\beta_{i},\lambda_{i}^m\right)\right)\right)\underline{\tilde{\alpha}}(\beta_{K+1},\lambda_{K+1}^m)\\
&=&\left(\prod_{i=1}^{K}\left(1-UB( \beta_{i},\lambda_{i}^m)\right)\right)\underline{UB}(\beta_{K+1},\lambda_{K+1}^m)-\left(\prod_{i=1}^{K}\left(1-UB( \beta_{i},\lambda_{i}^m)\right)\right)\underline{\tilde{\alpha}}(\beta_{K+1},\lambda_{K+1}^m)\\
&&+\left(\prod_{i=1}^{K}\left(1-UB( \beta_{i},\lambda_{i}^m)\right)\right)\underline{\tilde{\alpha}}(\beta_{K+1},\lambda_{K+1}^m)-\left(\prod_{i=1}^{K}\left(1-\tilde{\alpha}\left(\beta_{i},\lambda_{i}^m\right)\right)\right)\underline{\tilde{\alpha}}(\beta_{K+1},\lambda_{K+1}^m)\\
&=&\left(\prod_{i=1}^{K}\left(1-UB( \beta_{i},\lambda_{i}^m)\right)\right)\left(\underline{UB}(\beta_{K+1},\lambda_{K+1}^m)-\underline{\tilde{\alpha}}(\beta_{K+1},\lambda_{K+1}^m)\right)\\
&&+\left(\prod_{i=1}^{K}\left(1-UB( \beta_{i},\lambda_{i}^m)\right)- \prod_{i=1}^{K}\left(1-\tilde{\alpha}\left(\beta_{i},\lambda_{i}^m\right)\right)\right)\underline{\tilde{\alpha}}(\beta_{K+1},\lambda_{K+1}^m),
\end{eqnarray*}
Thus,
\begin{eqnarray*}
&&\lim_{m\rightarrow\infty}\sup_{\beta\geq0}\left\{\prod_{i=1}^{K+1}\left(1-UB( \beta_{i},\lambda_{i}^m)\right)-\prod_{i=1}^{K+1}\left(1-\tilde{\alpha}\left(\beta_{i},\lambda_{i}^m\right)\right)\right\}\\
&\leq&\lim_{m\rightarrow\infty}\sup_{\beta\geq0}\left\{\left(\prod_{i=1}^{K}\left(1-UB( \beta_{i},\lambda_{i}^m)\right)\right)\left(\underline{UB}(\beta_{K+1},\lambda_{K+1}^m)-\underline{\tilde{\alpha}}(\beta_{K+1},\lambda_{K+1}^m)\right)\right\}\\
&&+ \lim_{m\rightarrow\infty}\sup_{\beta\geq0}\left\{\left(\prod_{i=1}^{K}\left(1-UB( \beta_{i},\lambda_{i}^m)\right)- \prod_{i=1}^{K}\left(1-\tilde{\alpha}\left(\beta_{i},\lambda_{i}^m\right)\right)\right)\underline{\tilde{\alpha}}(\beta_{K+1},\lambda_{K+1}^m)\right\}.
\end{eqnarray*}

Again we have that $\left(\prod_{i=1}^{K}\left(1-UB( \beta_{i},\lambda_{i}^m)\right)\right)$ and $\underline{\tilde{\alpha}}(\beta_{K+1},\lambda_{K+1}^m)$ are contained in $(0,1]$. Then by Theorem \ref{UniConvOfBounds} and the induction assumption, we have $$\lim_{m\rightarrow\infty}\sup_{\beta\geq0}\left\{\left(\prod_{i=1}^{K}\left(1-UB( \beta_{i},\lambda_{i}^m)\right)\right)\left(\underline{UB}(\beta_{K+1},\lambda_{K+1}^m)-\underline{\tilde{\alpha}}(\beta_{K+1},\lambda_{K+1}^m)\right)\right\}=0$$ and $$\lim_{m\rightarrow\infty}\sup_{\beta\geq0}\left\{\left(\prod_{i=1}^{K}\left(1-UB( \beta_{i},\lambda_{i}^m)\right)- \prod_{i=1}^{K}\left(1-\tilde{\alpha}\left(\beta_{i},\lambda_{i}^m\right)\right)\right)\underline{\tilde{\alpha}}(\beta_{K+1},\lambda_{K+1}^m)\right\}=0.$$ This implies that $$\prod_{i=1}^{K+1}\left(1-UB( \beta_{i},\lambda_{i}^m)\right)-\prod_{i=1}^{K+1}\left(1-\tilde{\alpha}\left(\beta_{i},\lambda_{i}^m\right)\right)$$ converges uniformly to 0 in $\beta$ as $m \to \infty$, establishing the result.
\end{proof}

\begin{proof}[Proof of Lemma \ref{Det_Single_optimalityResult_extension}]
The inequality $\beta_{\lambda}^{G}\geq\beta_{\lambda}^{F}$ for all $\lambda>0$
is immediate from the definitions of these quantities and the fact that 
$\tilde{\alpha}(\beta,\lambda) \le UB(\beta,\lambda)$ for all $\lambda, \beta>0$. 

Denote the Halfin-Whitt approximation defined in \eqref{HW_2} in Section~\ref{chapter-mathematical-background} as $\alpha_{HW}(\cdot)$, and its inverse function as $\alpha_{HW}^{-1}(\cdot)$. The function $\alpha_{HW}(\cdot)$ is strictly decreasing and this implies that $\alpha_{HW}^{-1}(\cdot)$ is strictly decreasing. For any $\lambda>0$, we denote the inverse of $UB(\cdot,\lambda)$ as $UB^{-1}_{\lambda}(\cdot)$. $UB(\cdot,\lambda)$ is strictly decreasing for any $\lambda>0$, and this implies that $UB^{-1}_{\lambda}(\cdot)$ is strictly decreasing for any $\lambda>0$. By Janssen et al.\ \cite{janssen_08}, we have 
$$\lim_{\lambda\rightarrow\infty}UB(\beta,\lambda)=\alpha_{HW}(\beta), \; \forall \beta>0.$$ 
Together with the monotonicity of $UB(\beta,\lambda)$ in $\lambda$ for any $\beta>0$, we have $$\lim_{\lambda\rightarrow\infty}UB^{-1}_{\lambda}(x)=\alpha^{-1}_{HW}(x), \; \forall x>0.$$ Since $\lim_{\lambda\rightarrow\infty}\epsilon_{\lambda}=\epsilon>0$, there exist $l,u>0$, such that $0<l \leq \epsilon_{\lambda}\leq u$ when $\lambda$ is large enough. Also, since $\alpha_{HW}^{-1}(\cdot)$ is a continuous function, by Theorem \ref{buchanan} we have $$\lim_{\lambda\rightarrow\infty}\sup_{x>0}|UB^{-1}_{\lambda}(x)-\alpha_{HW}^{-1}(x)|=0.$$ This gives $$\lim_{\lambda\rightarrow\infty}UB^{-1}_{\lambda}(\epsilon_{\lambda})=\alpha_{HW}^{-1}(\epsilon),$$ which implies that $\lim_{\lambda\rightarrow\infty} \beta_{\lambda}^{G}$ exists and we denote it by $\beta^{*}$. Then analogous to the proof of Theorem \ref{Det_Single_optimalityResult} in Section~\ref{chapter-deterministic-arrival-rates}, we can show that the limit of $\beta^{F}_{\lambda}$ exists, and $\lim_{\lambda\rightarrow \infty}\beta_{\lambda}^{G}=\lim_{\lambda\rightarrow \infty}\beta_{\lambda}^{F}=\beta^{*}.$
\end{proof}

\end{document}